\documentclass[12pt]{amsart}
\usepackage{amscd}
\usepackage{amssymb}
\usepackage{graphicx}
\usepackage{color}
\usepackage[centering,text={15.5cm,22cm},
		marginparwidth=20mm]{geometry}
\usepackage{srcltx}
\usepackage[]{hyperref}

\usepackage{mathrsfs}

\usepackage[all]{xy}

\newtheorem{theorem}{Theorem}[section]
\newtheorem{lemma}[theorem]{Lemma}
\newtheorem{proposition}[theorem]{Proposition}

\theoremstyle{definition}

\theoremstyle{remark}

\numberwithin{equation}{section}
\numberwithin{figure}{section}


\newcommand{\ZZ} {\mathbb{Z}}

\newcommand{\RR} {\mathbb{R}}
\newcommand{\CC} {\mathbb{C}}

\newcommand{\PP} {\mathbb{P}}
\renewcommand{\AA} {\mathbb{A}}

\newcommand {\shM}  {\mathcal{M}}

\newcommand {\shP}  {\mathcal{P}}

\newcommand {\foM}  {\mathfrak{M}}

\newcommand {\foX}  {\mathfrak{X}}

\newcommand {\TT} {{\mathsf{T}}}


\newcommand {\GL}  {\operatorname{GL}}

\newcommand {\Hom}  {\operatorname{Hom}}

\newcommand {\out}  {\mathrm{out}}

\def\mydate{\ifcase\month \or January\or February\or March\or
April\or May\or June\or July\or August\or September\or October\or 
November\or December\fi \space\number\day,\space\number\year}


\begin{document}

\title[Quivers, curves, and the tropical vertex]{Quivers, curves,
and the tropical vertex}

\author{M. Gross} \address{Department of Mathematics, UCSD,
La Jolla, CA 92093, USA}
\email{mgross@math.ucsd.edu}

\author{R. Pandharipande}\address{Department of Mathematics, Princeton 
Univerity, Princeton, NJ 08544, USA}
\email{rahulp@math.princeton.edu}

\date{September 2009}

\begin{abstract}
Elements of the tropical vertex group are formal families
of symplectomorphisms of the 2-dimensional algebraic
torus. Commutators in the group are related
to Euler characteristics of the moduli spaces
of quiver representations and the Gromov-Witten theory
of toric surfaces. 
After a short survey of the subject (based on
lectures of Pandharipande at the 2009 {\em Geometry
summer school} in Lisbon), we 
prove new results about the rays and symmetries of scattering
diagrams of commutators (including previous conjectures by 
Gross-Siebert and Kontsevich).
Where possible, we present both the quiver and Gromov-Witten perspectives.
\end{abstract}

\maketitle
\setcounter{tocdepth}{1}
\tableofcontents


\section*{Introduction}

In Sections 1-3 of the paper,
we survey the recently discovered
relationship of three mathematical structures:
\begin{enumerate}
\item[(i)]
Euler characteristics of the moduli spaces
of quiver representations,
\item[(ii)] Gromov-Witten counts of rational curves on
toric surfaces, 
\item[(iii)] Ordered product factorizations of commutators in
the tropical vertex group.
\end{enumerate}
The tropical vertex group (iii) first arose in the work of Kontsevich and
Soibelman \cite{ks} and plays a significant role
in the program of \cite{GS}. A connection of the tropical
vertex group to (i) has been proven by Reineke \cite{R2}
using wall-crossing ideas. 
 A connection to (ii) is proven
in \cite{GPS}. 
Our aim here is to present the shortest path to the simplest cases
of the results. Lengthier treatments can be found in the
original references.

The definition and basic properties of the tropical vertex group are 
reviewed in Section 1. Reineke's result is Theorem 1 of Section 2.
The formula of \cite{GPS} relating commutators in the tropical
vertex group to rational curve counts is Theorem 2 of Section 3.
Put together, Theorems 1 and 2 yield a suprising 
equivalence between curve counts on toric surfaces
and Euler characteristics of moduli spaces of quiver representations. 
The equivalence is stated in Corollary 3
without any reference to the tropical vertex group.

In Section 4, we  address the question of
which slopes occur in the ordered product factorizations of
commutators (iii). In the language of (i), the question 
asks which slopes are achieved by semistable representations
of particular quivers. In Theorem 5,
we find necessary conditions from the perspective of (ii)
using the 
classical
geometry of curves on surfaces. The result includes all the
previous conjectures on scattering patterns as
special cases.

Symmetries of the  commutator factorizations are
proven in Theorem 7 of
 Section 5. From the point of view of curve counting,
the symmetries are obtained by transformations of 
blown-up toric surfaces. On the quiver side, the symmetries
are a consequence of well-known reflection functors.
Further directions in the subject are suggested in Section \ref{furr}

\section{The tropical vertex group}

\subsection{Automorphisms of the torus}
The 2-dimensional complex torus has very few
automorphisms 
$$\theta: \CC^* \times \CC^* \rightarrow \CC^* \times \CC^*$$
as an algebraic group. Since $\theta$ must take each component
$\CC^*$ to a
$1$-dimensional
subtorus, 
$$\text{Aut}_\CC^{\text{Gr}}(\CC^* \times \CC^*) 
\stackrel{\sim}{=} \GL_2(\mathbb{Z}).$$
As a complex algebraic variety, $\CC^* \times \CC^*$ has, in
addition, only the automorphisms obtained by the translation action
on itself,{\footnote{We leave the elementary proof
to the reader. An argument can be found by using
the characterization
$$\phi(z) = \lambda \cdot z^k\, \ \ \ \  \lambda\in \CC^*, \ k \in \mathbb{Z}$$
of all algebraic maps $\phi: \CC^* \rightarrow \CC^*$.
}} 
$$1 \rightarrow \CC^* \times \CC^*
 \rightarrow 
\text{Aut}_{\CC}(\CC^* \times \CC^*) \rightarrow
\text{Aut}_\CC^{\text{Gr}}(\CC^* \times \CC^*)
  \rightarrow 1.$$

A much richer algebraic structure appears if formal 1-parameter
families of automorphisms of $\CC^*\times \CC^*$ 
are considered,
$$A = \text{Aut}_{\CC[[t]]}(\CC^* \times \CC^* \times \text{Spec}(\CC[[t]])).$$
Let $x$ and $y$ be the coordinates of the two factors
of $\CC^* \times \CC^*$. Then,
$$\CC^* \times \CC^* = \text{Spec}(\CC[x,x^{-1},y,y^{-1}]).$$
We may alternatively view $A$ as
a group of algebra automorphisms,
$$A = \text{Aut}_{\CC[[t]]} (\CC[x,x^{-1},y,y^{-1}][[t]]).$$

Nontrivial elements of $A$ are easily found.
Let $(a,b)\in \mathbb{Z}^2$ be a nonzero vector, and let
$f \in \CC[x,x^{-1},y,y^{-1}][[t]]$ be a function of the form
$$f= 1 + t x^a y^b\cdot g( x^ay^b,t), \ \ \ \ g(z,t) \in \CC[z][[t]].$$
We specify the values of an automorphism on $x$ and $y$ by
\begin{equation}\label{vrzza}
\theta_{(a,b),f\ }(x) = x \cdot f^{-b}, \ 
\ \theta_{(a,b),f\ }(y)= y \cdot f^a \ \ .
\end{equation}
The assignment \eqref{vrzza} extends uniquely to determine
an element $\theta_{(a,b),f} \in A$.
The inverse is obtained by inverting $f$,
$$\theta_{(a,b),f}^{-1} = \theta_{(a,b),f^{-1}} \ . $$

\subsection{Tropical vertex group}
The tropical vertex group $H\subset A$ is the completion
with respect to the maximal ideal $(t) \subset \CC[[t]]$
of the subgroup 
generated by {\em all} elements of the form $\theta_{(a,b),f}$.
In particular, infinite products are well-defined
in $H$ if only finitely many terms are
nontrivial mod $t^k$ (for every $k$).
A more natural characterization of $H$
via the associated Lie algebra may be found in Section 1.1 of
\cite{GPS}.

The torus $\CC^* \times \CC^*$ has a standard 
holomorphic symplectic form given by 
$$\omega = \frac{dx}{x} \wedge \frac{dy}{y}.$$
Let $S\subset A$ be the subgroup of automorphisms
preserving $\omega$,
$$S = \{ \ \theta \in A\ | \ \theta^*(\omega)= \omega \ \}.$$

\begin{lemma} $H\subset S$. \end{lemma}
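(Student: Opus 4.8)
The plan is to reduce immediately to the generators. Since $S$ is a subgroup of $A$, the subgroup of $A$ generated by the elements $\theta_{(a,b),f}$ is automatically contained in $S$ once each generator is; and since the condition $\theta^*\omega=\omega$ is closed under $(t)$-adic limits — the pullback action of $A$ on the module of relative $2$-forms of $\CC[x,x^{-1},y,y^{-1}][[t]]$ over $\CC[[t]]$ is continuous for the $(t)$-adic topology, so $S$ is a $(t)$-adically closed subset of $A$ — membership in $S$ persists under the completion defining $H$. Hence it suffices to check that a single generator $\theta_{(a,b),f}$ lies in $S$.

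For that I would simply compute the pullback. Writing $\theta=\theta_{(a,b),f}$ and letting $d$ denote the differential relative to $\Spec\CC[[t]]$, formula \eqref{vrzza} gives
$$\theta^*\!\left(\frac{dx}{x}\right)=\frac{dx}{x}-b\,\frac{df}{f},\qquad \theta^*\!\left(\frac{dy}{y}\right)=\frac{dy}{y}+a\,\frac{df}{f}.$$
Wedging these together and discarding the term $\frac{df}{f}\wedge\frac{df}{f}=0$ yields
$$\theta^*\omega=\frac{dx}{x}\wedge\frac{dy}{y}+\Bigl(a\,\frac{dx}{x}+b\,\frac{dy}{y}\Bigr)\wedge\frac{df}{f}.$$
The remaining term vanishes because $f=F(x^ay^b,t)$ is a power series in the single monomial $z=x^ay^b$ (with coefficients in $\CC[[t]]$): thus $df=F_z\cdot x^ay^b\,\bigl(a\,\tfrac{dx}{x}+b\,\tfrac{dy}{y}\bigr)$, so $\frac{df}{f}$ is a function times $a\,\tfrac{dx}{x}+b\,\tfrac{dy}{y}$, against which it wedges to zero. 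Therefore $\theta^*\omega=\omega$, i.e. $\theta_{(a,b),f}\in S$, and combining this with the first paragraph gives $H\subset S$.

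I do not expect a real obstacle here: the computational content is just the elementary observation that the exponents $-b$ and $a$ in \eqref{vrzza} are precisely those making $\frac{df}{f}$ proportional to $d\log(x^ay^b)$. The only step warranting an explicit sentence of justification is the passage from the generators to the completion $H$ — namely that ``$\theta^*\omega=\omega$'' is a $(t)$-adically closed condition, so that it survives the infinite products allowed in $H$.
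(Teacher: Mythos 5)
Your proof is correct and follows essentially the same route as the paper: a direct computation of the pullback of $\omega$ under a generator $\theta_{(a,b),f}$, with the key point in both cases being that $f$ depends on $x,y$ only through $x^ay^b$, so that $\frac{df}{f}$ is proportional to $a\,\frac{dx}{x}+b\,\frac{dy}{y}$ (the paper expresses this same cancellation as the identity $\frac{af_y}{xf}=\frac{bf_x}{yf}$). Your explicit remark that the condition $\theta^*\omega=\omega$ is a subgroup condition closed under $(t)$-adic limits is a point the paper leaves implicit, but it does not change the substance of the argument.
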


\begin{proof}
The result is obtained from a direct calculation. Let
$$\widetilde{x} = x f^{-b}, \ \ \ \ \ \widetilde{y}= y f^{a}.$$
From the equations
$$\frac{d \widetilde{x}}{\widetilde{x}}= \frac{dx}{x}
-\frac{b f_x}{f} dx -  \frac{bf_y}{f} dy, \ \ \ \ \  
\frac{d \widetilde{y}}{\widetilde{y}}= \frac{dy}{y}
+\frac{a f_y}{f} dy + \frac{af_x}{f} dx,$$
we conclude $\theta^*_{(a,b),f}(\omega)= \omega$
if 
$$\frac{a f_y}{xf} =  \frac{bf_x}{yf}.$$
The latter 
follows from the dependence of $f$ on $x$ and $y$
only through $x^ay^b$.
\end{proof}

A slight variant of the tropical vertex group
 $H$ first arose in the study of affine
structures by Kontsevich and Soibelman in \cite{ks}.
Further development, related to mirror symmetry and tropical geometry, can be found in 
\cite{GS}. 
Recently, the tropical vertex group has played a role in 
wall-crossing formulas for counting invariants in derived
categories \cite{ks2}.

\subsection{Commutators}
The first question  we can ask about the tropical
vertex group is to find
a formula for the commutators of the generators.
The answer is related to Euler characteristics of
moduli spaces of quiver representations and to
 Gromov-Witten counts of rational curves on
toric surfaces.
The simplest nontrivial cases to consider are the commutators of 
the elements
$$S_{\ell_1}=\theta_{(1,0),(1+tx)^{\ell_1}} \ \  \text{and} \ \ 
T_{\ell_2}=\theta_{(0,1), (1+ty)^{\ell_2}}$$
where $\ell_1,\ell_2 >0$.
By an elementary result of \cite{ks} reviewed in Section 1.3
of \cite{GPS}, there exists a unique
factorization
\begin{equation}\label{jjtr}
    T_{\ell_2}^{-1} \circ S_{\ell_1} \circ T_{\ell_2}  \circ S_{\ell_1}^{-1} =
\stackrel{\rightarrow} \prod \theta_{(a,b),f_{a,b}}\ 
\end{equation}
where the product on the right is over {\em all} primitive
vectors $(a,b)\in \mathbb{Z}^2$ lying strictly in the first quadrant.{\footnote{A vector
 $(a,b)$ is primitive if it is not divisible in $\mathbb{Z}^2$. 
Primitivity implies $(a,b)\neq (0,0)$.
Strict inclusion in
the first  quadrant is equivalent to $a>0$ and $b>0$.}}${}^{,}${\footnote{Here 
and throughout the paper, we drop the dependence of $f_{a,b}$
 upon $(\ell_1,\ell_2)$ for notational convenience.}}
The order is determined by 
increasing slopes of the vectors $(a,b)$.
The product \eqref{jjtr} is very often infinite, but
always has only finitely many nontrivial terms mod $t^k$ (for every $k$).
%
The question is what are the functions $f_{a,b}$ 
associated to the slopes?

\subsection{Examples} \label{exxc}
The easiest example is $\ell_1=\ell_2=1$. The
formula
\[
T_1^{-1}\circ S_1\circ T_1\circ S_1^{-1}=\theta_{(1,1),1+t^2xy}
\]
can  be directly checked
by hand. We will display the information by drawing rays of slope
$(a,b)$ in the first quadrant for every term appearing on the
right-hand side. Each ray should be thought of as labelled with
a function, see Figure \ref{diagram11}.

\begin{figure}
\input{diagram11.pstex_t}
\caption{}
\label{diagram11} 
\end{figure}

For $\ell_1=\ell_2=2$, we already
have a much more complicated expansion, 
\begin{eqnarray*}
T_2^{-1}\circ S_2\circ T_2\circ S_2^{-1}&=&\theta_{(1,2),(1+t^3xy^2)^2}
\,\circ \,\theta_{(2,3),(1+t^5x^2y^3)^2}\,\circ\,\theta_{(3,4),(1+t^7x^3y^4)^2}
\, \circ \cdots\\
&&\ \ \ \ \ \ \ \ \  \ \ \ \ \ \ \ \ 
\ \ \ \   \ \circ\,\ \theta_{(1,1),1/(1-t^2xy)^4}\,\circ\\
&&\cdots\circ\,\theta_{(4,3),(1+t^7x^4y^3)^2}
\,\circ\,
\theta_{(3,2),(1+t^5x^3y^2)^2}
\,\circ\,
\theta_{(2,1),(1+t^3x^2y)^2}.
\end{eqnarray*}
The values of $(a,b)$ which occur are of the form $(k,k+1)$ and
$(1,1)$ and $(k+1,k)$ for all $k\ge 1$. We depict the slopes
occuring by rays in the first quadrant as in
Figure \ref{diagram22}. Ideally, we would label each ray $\RR_{\ge 0}(a,b)$
with the function $f_{a,b}$, however the diagram would become
too difficult to draw. Here 
\begin{eqnarray*}
f_{1,1}&=&1/(1-t^2xy)^4\\
f_{k,k+1}&=&(1+t^{2k+1}x^ky^{k+1})^2\\
f_{k+1,k}&=&(1+t^{2k+1}x^{k+1}y^{k})^2 \ .
\end{eqnarray*}

\begin{figure}
\input{diagram22.pstex_t}
\caption{}
\label{diagram22} 
\end{figure}

The case $\ell_1=\ell_2=3$ becomes
still more complex,  illustrated in Figure \ref{diagram33}.
Extrapolating from calculations, we find
rays with primitives
$$(a,b)=(3,1),\ (8,3),\ (21,8),\ \ldots$$ converging
to the ray of slope $(3-\sqrt{5})/2$ and rays with primitives
$$(a,b)=(1,3),\ (3,8),\ (8,21),\ \ldots$$ 
converging to the ray of slope
$(3+\sqrt{5})/2$. Meanwhile, all rays with rational slope between
$(3-\sqrt{5})/2$ and $(3+\sqrt{5})/2$ appear to occur.

We do not know closed forms for the
functions associated to each ray. However,
Gross conjectured  the function attached to the line of
slope $1$ in 
 Figure \ref{diagram33}
is
\begin{equation}\label{vvbb}
\left(\sum_{k=0}^{\infty} {1\over 3k+1}\begin{pmatrix} 4k\\ k\end{pmatrix}
t^{2k}x^ky^k\right)^9.
\end{equation}

\begin{figure}
\input{diagram33.pstex_t}
\caption{}
\label{diagram33} 
\end{figure}

Finally, consider the asymmetric case  $(\ell_1,\ell_2)=(2,3)$. 
We again appear to obtain a discrete series of rays and
a cone in which all rays  occur. We find rays with
primitives
$$(a,b)=(2,1),\ (5,2),\ (8,5),\ (19,12),\ \ldots$$ converging to a ray
of slope $(3-\sqrt{3})/2$ and rays with primitives 
$$(a,b)=(1,3),\ (2,5),\ (5,12),\
(8,19),\ \ldots$$ 
converging to a ray of slope $(3+\sqrt{3})/2$. 
All rays with rational slope in between these two quadratic
irrational slopes seem to appear. The function attached to the ray of slope
$1$ appears to be
\[
\left(\sum_{k=0}^{\infty} {1\over k+1}\begin{pmatrix}2k\\ k\end{pmatrix}
t^{2k}x^ky^k\right)^6.
\]
Inside the exponential is the generating series for Catalan numbers.

\vspace{10pt}
\noindent{\bf Conjecture.}
{\em For arbitrary $(\ell_1,\ell_2)$, the function attached
to the ray of slope $1$ is
\begin{equation}\label{nnyy}
\left(\sum_{k=0}^{\infty}{1\over (\ell_1\ell_2-\ell_1-\ell_2)k+1}
\begin{pmatrix} (\ell_1-1)(\ell_2-1)k\\ k\end{pmatrix}
t^{2k}x^ky^k
\right)^{\ell_1\ell_2}.
\end{equation}}

\vspace{10pt} 
The above conjecture specializes to the series \eqref{vvbb}
in the 
$(\ell_1,\ell_2)=(3,3)$ case.
The specialization of \eqref{nnyy} to 
 $\ell_1=\ell_2$ was conjectured  by Kontsevich  (motivated
by \eqref{vvbb}) and 
proved by Reineke in \cite{R3}.

The series \eqref{nnyy} attached to the ray of slope 1
is not always
a rational functional in the variables $t,x,y$.
However, since
$$S_r = \sum_{k=0}^\infty \frac{1}{(r-1)k+1} \binom{rk}{k} t^{2k}x^ky^k$$
satisfies the polynomial equation
$$t^2xy  (S_r)^r -S_r +1 = 0,$$
the function  \eqref{nnyy} is algebraic over $\mathbb{Q}(t,x,y)$.
Whether the
functions attached to other slopes are 
algebraic over $\mathbb{Q}(t,x,y)$ is an interesting 
question (asked first by Kontsevich).

\section{Moduli of quiver representations}

\subsection{Definitions}
A {\em quiver} is a directed graph. We will consider here only
the fundamental $m$-Kronecker quiver 
$Q_m$
consisting of two vertices $\{v_1,v_2\}$ 
and $m$ edges $\{e_1, \ldots, e_m\}$
with equal orientations
$$v_1 \stackrel{e_j}{\longrightarrow} v_2\ .$$
The $m$-Kronecker quiver may be depicted with $m$ arrows as:
\[
\xymatrix@C=30pt
{
v_1\ar@/^30pt/[rr]^{e_1}\ar@/^20pt/[rr]_{e_2}\ar@/_23pt/[rr]^{e_{m-1}}\ar@/_33pt/[rr]_{e_m}&\vdots&v_2
}
\]

A representation of $\rho=(V_1,V_2,\tau_1, \ldots, \tau_m)$ of the quiver
$Q_m$ consists of the following linear
algebraic
data
\begin{enumerate}
\item[(i)] vector
spaces $V_i$ 
associated to the vertices $v_i$,
\item[(ii)] linear transformations $\tau_j:V_1\rightarrow V_2$ associated to
the edges $e_j$.
\end{enumerate}
While representations over any field may be studied, we 
 will restrict our attention to finite dimensional representations over
$\mathbb{C}$. Associated to $\rho$ is the {\em dimension vector}
$$\text{dim}(\rho) = (\text{dim}(V_1), \text{dim}(V_2)) \in \mathbb{Z}^2\ .$$

A {\em morphism} $\phi=(\phi_1, \phi_2)$ 
between two representations $\rho$ and $\rho'$ 
of $Q_m$ is a pair of linear tranformations
$$\phi_i: V_i \rightarrow V_i'$$
satisfying $\tau_j' \circ \phi_1 = \phi_2 \circ \tau_j$
for all $j$.
Two representations are
{\em isomorphic} if there exists a morphism $\phi$ for
which both $\phi_1$ and $\phi_2$ are
isomorphisms of vector spaces.
The notions of
sub and quotient representations are well-defined. In fact, 
the representations of $Q_m$ are easily seen to form an
abelian category. 

There are several accessible references for quiver representations.
We refer the reader to papers by King \cite{King} and Reineke \cite{R1}
where the representation theory of arbitrary quivers is treated.

\subsection{Moduli} 
Consider the
moduli space of representations of $Q_m$ with fixed dimension
vector $(d_1,d_2)$.
Let $\Hom(\mathbb{C}^{d_1}, \mathbb{C}^{d_2})$ be the space of
$d_1\times d_2$ matrices.
Every element of 
\begin{equation}\label{k23}
\shP_m(d_1,d_2)=\bigoplus_{j=1}^m \Hom(\mathbb{C}^{d_1}, \mathbb{C}^{d_2})
\end{equation}
determines a representation of $Q_m$ with dimension vector $(d_1,d_2)$.
Moreover,
the isomorphism class of every representation of $Q_m$
with dimension vector $(d_1,d_2)$
is achieved in the parameter space $\shP_m(d_1,d_2)$.

Since  $\Hom(\mathbb{C}^{d_1}, \mathbb{C}^{d_2})$ carries
canonical commuting actions of
$\mathbf{GL}_{d_1}$ and $\mathbf{GL}_{d_2}$, we obtain an action of
the product
 $\mathbf{GL}_{d_1} \times \mathbf{GL}_{d_2}$ on the
parameter space $\shP_m(d_1,d_2)$.
In fact, the scalars
$$\mathbb{C}^* \subset
\mathbf{GL}_{d_1} \times \mathbf{GL}_{d_2},$$
included diagonally
$\xi \mapsto (\xi,\xi)$
are easily seen to act trivially.
Hence, we actually have an
action of 
$$
\mathbf{G}_{d_1,d_2}= \Big( {\mathbf{GL}_{d_1} \times \mathbf{GL}_{d_2}}
\Big) \Big/ 
\mathbb{C}^*.$$

To construct an algebraic moduli space of
representations of $Q_m$, we remove the redundancy in the 
parameter space \eqref{k23} by taking
 the algebraic quotient
\begin{equation}
\label{ggt}
\shP_m(d_1,d_2)    
\Big/ \mathbf{G}_{d_1,d_2} \ .
\end{equation}
While the quotient 
  \eqref{ggt} 
is well-defined{\footnote{Quotients of 
 reductive groups actions on affine varieties can
always be taken.}}, an elementary analysis shows that there
are no nontrivial invariants \cite{R1}. Hence,
\begin{equation}
\label{ggtt}
\shP_m(d_1,d_2) 
\Big/ \mathbf{G}_{d_1,d_2} \ = \ \text{Spec}(\mathbb{C}) \ .
\end{equation}

\subsection{Stability conditions}
 The trivial quotient \eqref{ggtt} is hardly
a satisfactory answer.
Representations of $Q_m$
with dimension vector $(d_1,d_2)$ should vary in a
\begin{equation}
\label{nnt}
\text{dim}\ \shP_m(d_1,d_2) - \text{dim}\ \mathbf{G}_{d_1,d_2} =
md_1d_2 - d_1^2-d_2^2 +1\ 
\end{equation}
dimensional family.
A much richer view of the moduli of quiver
representations is obtained by imposing
stability conditions.

A {\em stability condition} $\omega$  on $Q_m$ is given by a pair of
integers $(w_1,w_2)$.
With respect to $\omega$, the slope of a representation $\rho$
of $Q_m$ with dimension vector $(d_1,d_2)$ is
$$\mu(\rho) = \frac{w_1d_1 + w_2 d_2}{d_1+d_2}\ .$$
A representation $\rho$ is {\em (semi)stable} if, for every 
proper{\footnote{Both
0 and the entire representation are excluded.}} 
subrepresentation $\widehat{\rho}\subset \rho$, 
$$\mu(\widehat{\rho})\ \ (\leq)\  < \ \ \mu(\rho)\ .$$

A central result of \cite{King} is the construction of moduli spaces
of semistable representations of quivers. Applied to $Q_m$, we
obtain the moduli space
$\mathcal{M}^{\omega}_m(d_1,d_2)$
 of $\omega$-semistable representations with dimension
vector $(d_1,d_2)$. We present here a variation  of the method of \cite{King}.

The two determinants yield two basic characters of the
group $\mathbf{GL}_{d_1}\times \mathbf{GL}_{d_2}$,
$$\text{det}_1( g_1,g_2) = \text{det}(g_1), 
\ \ \ \text{det}_2(g_1,g_2)=\text{det}(g_2)\ .$$
The stability condition $\omega$ defines a character 
$$\lambda(g_1,g_2) = \text{det}_1^{(w_2-w_1)d_2} \cdot
\text{det}_2^{(w_1-w_2)d_1} \ .$$
Since $\lambda$ is trivial on 
$\mathbb{C}^* \subset
\mathbf{GL}_{d_1} \times \mathbf{GL}_{d_2}$,
$\lambda$ descends to a character of $\mathbf{G}_{d_1,d_2}$.
Let
\begin{equation}\label{fcc}
\shP^\omega_m(d_1,d_2) = \lambda \otimes \shP_m(d_1,d_2) \oplus \lambda
\end{equation}
be the representation of  $\mathbf{G}_{d_1,d_2}$
obtained by tensoring and adding the 1-dimensional character $\lambda$
to the parameter space \eqref{k23}.
Let
$$\mathbb{P}\Big( \shP^\omega_m(d_1,d_2)\Big)^{ss}\subset
\mathbb{P}\Big( \shP^\omega_m(d_1,d_2)\Big)$$
denote the semistable locus of the canonically linearized
$\mathbf{G}_{d_1,d_2}$-action.

We are not interested in the entire variety 
$\mathbb{P}\Big( \shP^\omega_m(d_1,d_2)\Big)$.
There is a canonical
open embedding
of the parameter space \eqref{k23},
$$\shP_m(d_1,d_2) \subset \mathbb{P}\Big( \shP^\omega_m(d_1,d_2)\Big),$$
as a $\mathbf{G}_{d_1,d_2}$-equivariant open set
defined by the sum structure \eqref{fcc}.
The moduli space of $\omega$-semistable representations of
$Q_m$ with dimension vector $(d_1,d_2)$ is the
quotient
$$\mathcal{M}^{\omega}_m(d_1,d_2) = \left(
\shP_m(d_1,d_2) \ \cap \  \mathbb{P}\Big( \shP^\omega_m(d_1,d_2)\Big)^{ss}
\right) \Big/ \mathbf{G}_{d_1,d_2} \ . $$

Several important properties of the moduli space of $\omega$-semistable
representations can be deduced
from the construction \cite{King}:
\begin{enumerate}
\item[(i)]
$\mathcal{M}^{\omega}_m(d_1,d_2)$ is a  projective variety.

\item[(ii)]
An open set $\mathcal{M}^{\omega}_m(d_1,d_2)^{stable}
\subset \mathcal{M}^\omega(d_1,d_2)$  parameterizes
isomorphism classes of $\omega$-stable representations of
$Q_m$. Moreover, 
$\mathcal{M}^{\omega}_m(d_1,d_2)^{stable}$ is nonsingular
of dimension \eqref{nnt}.
\item[(iii)]
$\mathcal{M}^{\omega}_m(d_1,d_2)$ parameterizes
isomorphism classes of $\omega$-semistable representations of
$Q_m$ modulo Jordan-Holder equivalence (often called $S$-equival\-ence).

\end{enumerate}
While properties (ii) and (iii) hold for stability conditions
on arbitrary quivers, property (i) is special to $Q_m$.
By the results of \cite{King}, $\mathcal{M}^{\omega}_m(d_1,d_2)$
is projective over the quotient \eqref{ggtt}. Since
the quotient \eqref{ggtt} is $\text{Spec}(\mathbb{C})$, 
the moduli space $\mathcal{M}^{\omega}_m(d_1,d_2)$
is a projective variety.

If $\omega=(0,0)$, all representations are semistable. Then,
$$\mathcal{M}^{(0,0)}_m(d_1,d_2)= 
\shP_m(d_1,d_2)   
\Big/ \mathbf{G}_{d_1,d_2} \ = \ \text{Spec}(\mathbb{C}) \ $$
as before.
By the following result of Reineke \cite{R1},
we will restrict our attention to the stability conditions
$(1,0)$ and $(0,1)$.

\begin{lemma} 
$\omega$-(semi)stability is equivalent to (semi)stability 
with respect to either $(0,0)$, $(1,0)$, or $(0,1)$.
\end{lemma}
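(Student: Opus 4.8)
The plan is to show that the stability function $\mu(\rho)=\frac{w_1d_1+w_2d_2}{d_1+d_2}$ depends on $\omega=(w_1,w_2)$ only through the difference $w_1-w_2$, and that the ordering of slopes of subrepresentations is governed entirely by the sign of $w_1-w_2$. Writing $\mu(\rho)=w_2+(w_1-w_2)\frac{d_1}{d_1+d_2}$, one sees at once that for a subrepresentation $\widehat\rho\subset\rho$ with dimension vectors $(\widehat d_1,\widehat d_2)$ and $(d_1,d_2)$ respectively, the inequality $\mu(\widehat\rho)<\mu(\rho)$ is equivalent to $(w_1-w_2)\bigl(\tfrac{\widehat d_1}{\widehat d_1+\widehat d_2}-\tfrac{d_1}{d_1+d_2}\bigr)<0$, i.e.\ to the sign of $(w_1-w_2)(\widehat d_1 d_2-\widehat d_2 d_1)$. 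Thus $\omega$-(semi)stability is unchanged if we replace $(w_1,w_2)$ by any pair with the same sign of $w_1-w_2$.

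First I would dispose of the case $w_1=w_2$: then $\mu$ is constant, every subrepresentation has the same slope as $\rho$, so every representation is semistable and this is exactly $(0,0)$-stability. Next, if $w_1>w_2$, pick the normalization $(w_1,w_2)=(1,0)$; since only the sign of $w_1-w_2>0$ enters the comparison above, the set of (semi)stable representations for $(w_1,w_2)$ coincides with that for $(1,0)$. Symmetrically, if $w_1<w_2$, the relevant sign is negative and the same computation identifies $\omega$-(semi)stability with $(0,1)$-(semi)stability. This covers all three cases and gives the claim.

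The only point requiring a word of care — and the closest thing to an obstacle — is that "(semi)stability depends only on the sign of $w_1-w_2$" must be checked for the strict ($<$) and non-strict ($\le$) inequalities \emph{simultaneously}, and that rescaling $w_1-w_2$ by a positive rational does not change which subrepresentations are destabilizing. Both follow from the displayed factorization, because multiplying the test quantity $(w_1-w_2)(\widehat d_1 d_2-\widehat d_2 d_1)$ by a positive constant preserves its sign and its vanishing; I would spell this out in one line. No deeper input is needed — in particular the construction of $\mathcal{M}^\omega_m(d_1,d_2)$ from \cite{King} plays no role here, since the statement is purely about the combinatorial (semi)stability condition on objects of the abelian category of representations of $Q_m$.
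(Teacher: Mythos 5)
Your proposal is correct and follows essentially the same route as the paper: the paper's proof invokes invariance of (semi)stability under adding a common constant to $(w_1,w_2)$ and under positive rescaling, which together amount exactly to your observation that only the sign of $w_1-w_2$ enters the slope comparison, followed by the same three-case analysis. Your explicit factorization $\mu(\rho)=w_2+(w_1-w_2)\frac{d_1}{d_1+d_2}$ just packages those two invariances into a single identity, so no substantive difference.
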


\begin{proof}
Let $\omega=(w_1,w_2)$.
By the definition of (semi)stability of representations,
we see
$\omega$-(semi)stability is equivalent to both
\begin{enumerate}
\item[(i)] $(w_1+\gamma,w_2+\gamma)$-(semi)stability for
           $\gamma\in \mathbb{Z}$ and
\item[(ii)] $(\lambda w_1, \lambda w_2)$-(semi)stability for
            $\lambda\in \mathbb{Z}>0$.
\end{enumerate}  
If $w_1=w_2$, then $\omega$-(semi)stability is equivalent to
$(0,0)$-(semi)stability by (i).
If $w_1>w_2$, then $\omega$-(semi)stability is equivalent to
$(w_1-w_2,0)$-(semi)stability by (i) and then 
$(1,0)$-(semi)stability by (ii). Similarly, the $w_1<w_2$
case leads to 
$(0,1)$-(semi)stability.
\end{proof}

\subsection{Framing} \label{fr11}
Strictly semistable representations of $Q_m$ usually lead to singularities
of the moduli space $\mathcal{M}_m^\omega(d_1,d_2)$.
Following \cite{ER}, we introduce framing data to improve the
moduli behaviour.

We consider two types of framings for representations of $Q_m$.
A {\em back} framed representation of $Q_m$ is a
pair 
$(\rho, L_1)$
where $\rho=(V_1,V_2,\tau_1,\ldots, \tau_m)$ is  standard
 representation of $Q_m$ and
$L_1\subset V_1$ is a 1-dimensional subspace.
A {\em front} framed representation of $Q_m$ is a
pair 
$(\rho, L_2)$
where $L_2\subset V_2$ is a 1-dimensional subspace.
The subspaces $L_i$ are the framings.
Two framed representations are isomorphic if the
underlying standard representations admit an
isomorphism preserving the framing.

A stability condition $\omega$ for $Q_m$ induces
a canonical notion of stability for framed representations.
A framed representation $(\rho,L_i)$
is {\em stable} if the following two conditions hold:
\begin{enumerate}
\item[(i)] $\rho$ is an $\omega$-semistable representation,
\item[(ii)]  for every proper subrepresentation $\widehat{\rho} \subset
\rho$ containing $L_i$,
$$\mu(\widehat{\rho}) < \mu(\rho).$$
\end{enumerate}
The moduli of stable framed representations admits a
 GIT quotient construction with no strictly
semistables. In fact, stable framed representations
can be viewed as stable standard representations for
quivers obtained by augmenting $Q_m$ by one vertex
(and considering appropriate standard stability
conditions). We refer the reader to \cite{ER} for a detailed
discussion.

Let $\shM_m^{\omega,B}(d_1,d_2)$ and $\shM_m^{\omega,F}(d_1,d_2)$
denote the moduli spaces of back and front framed
representations of $Q_m$. Both are nonsingular, irreducible,
projective varieties.
 
\subsection{Examples: stability condition $(0,1)$}
Consider first the stability condition $(0,1)$
on the quiver $Q_m$. Suppose $\rho$ is a
standard representation with dimension
vector $(d_1,d_2)$ satisfying $d_1,d_2>0$. 
There exists a proper subrepresentation
$$\widehat{\rho}= (0,\widehat{V}_2, 0, \ldots, 0)$$
where $\widehat{V}_2\subset V_2$ is any 1 dimensional
subspace. We see
$$\mu(\widehat{\rho}) = \frac{1}{1}> \frac{d_2}{d_1+d_2}=\mu(\rho) \ .$$
Hence, $\rho$ can not be $(0,1)$-semistable.

The dimension vectors of $(0,1)$-semistable representations
of $Q_m$ must be parallel to either $(1,0)$ or $(0,1)$.
In fact, if framings are placed, only the dimension vectors
$(1,0)$ and $(0,1)$ are possible. Elementary considerations
yield the following result.

\begin{lemma} The moduli space of stable framed representations 
of $Q_m$ with respect to the condition $(0,1)$ is a point
in the two cases
$$\shM_{m}^{(0,1),B}(1,0), \ \  \ \ \shM_{m}^{(0,1),F}(0,1),$$
and empty otherwise.
\end{lemma}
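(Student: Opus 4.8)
The plan is to combine the dimension-vector restriction just established for the condition $(0,1)$ with a direct analysis of the framed stability condition. First I would record that, by the preceding paragraph, any $(0,1)$-semistable representation $\rho$ of $Q_m$ has dimension vector $(d_1,d_2)$ parallel to $(1,0)$ or to $(0,1)$, i.e. $d_1=0$ or $d_2=0$. In either case one of the two vector spaces vanishes, so all the maps $\tau_j$ are forced to be zero; hence $\rho$ is a direct sum of copies of the two simple objects $(\CC,0,0,\dots,0)$ and $(0,\CC,0,\dots,0)$, and every subrepresentation of $\rho$ is again of this split form, with slope equal to $\mu(\rho)$.

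Next I would treat the back framed case $\shM^{(0,1),B}_m(d_1,d_2)$. The existence of a $1$-dimensional $L_1\subset V_1$ forces $d_1\ge 1$, so the dimension vector must be $(d_1,0)$ and $V_2=0$. Here $\mu(\rho)=0$ and every subrepresentation $\widehat\rho=(\widehat V_1,0,\dots,0)$ also has slope $0$, so $\rho$ is automatically $(0,1)$-semistable and condition (i) of framed stability holds. Condition (ii) requires $\mu(\widehat\rho)<\mu(\rho)=0$ for every \emph{proper} subrepresentation containing $L_1$; since all such slopes equal $0$, this can hold only if there is no proper subrepresentation containing $L_1$, i.e. only if $L_1=V_1$, i.e. $d_1=1$. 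Conversely, for $(d_1,d_2)=(1,0)$ the datum $(V_1,V_2,\tau_\bullet,L_1)=(\CC,0,0,\dots,0,\CC)$ is the unique framed representation up to isomorphism and it is stable. Since the GIT construction of \cite{ER} has no strictly semistable points, $\shM^{(0,1),B}_m(1,0)$ is a single point and $\shM^{(0,1),B}_m(d_1,d_2)=\varnothing$ otherwise.

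Then I would run the mirror-image argument for front framings: $L_2\subset V_2$ forces $d_2\ge 1$ and hence dimension vector $(0,d_2)$, $V_1=0$, $\mu(\rho)=1$; the same slope comparison shows condition (ii) fails unless $d_2=1$, and $(0,\CC,0,\dots,0,\CC)$ is then the unique stable framed representation. This gives $\shM^{(0,1),F}_m(0,1)=\mathrm{pt}$ and empty otherwise, completing the list.

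The argument is entirely elementary; the only point that needs a little care — and the place a reader could slip — is the bookkeeping for which degenerate dimension vectors survive both constraints: the framing forces one coordinate of $\dim(\rho)$ to be positive while $(0,1)$-semistability forces the \emph{other} coordinate to vanish, leaving exactly the two one-dimensional possibilities, and the sharper framed condition (ii) then cuts these down from $d_i$ arbitrary to $d_i=1$. There is no genuine obstacle beyond carefully checking these edge cases.
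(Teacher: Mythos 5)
Your argument is correct and is exactly the "elementary considerations" the paper alludes to: it uses the same preliminary observation that $(0,1)$-semistability forces $d_1=0$ or $d_2=0$, and then the framed stability condition (ii) (applied to the subrepresentation $L_i$ itself when $d_i\ge 2$) cuts the possibilities down to the dimension vectors $(1,0)$ and $(0,1)$, each giving a single point. No gaps; this matches the paper's intended proof.
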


\subsection{Examples: stability condition $(1,0)$}
The stability condition $(1,0)$
on the quiver $Q_m$ leads to much more interesting behavior.
Unlike the $(0,1)$ condition, we will here be only able 
to undertake a case by case analysis.

For the $1$-Kronecker quiver $Q_1$, the moduli spaces
of stable framed representations must have dimension
vectors equal to $(1,0)$, $(0,1)$, or $(1,1)$. Again,
in all four cases (for possible back and front framing), the
moduli spaces are points.

For the $2$-Kronecker quiver, we find a richer set of possibilities
of $(1,0)$-semistable representations.

\begin{lemma}\label{hnt} 
If $\rho$ is a $(1,0)$-semistable representation of $Q_2$,
then the dimension vector must be proportional to one of  
$$(k,k+1), \ \ (1,1), \ \ (k+1,k) $$
for $k\geq 1$.
\end{lemma}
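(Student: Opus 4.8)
The plan is to analyze which dimension vectors $(d_1,d_2)$ with $d_1,d_2 > 0$ can support a $(1,0)$-semistable representation of $Q_2$, using the basic slope inequality together with the fact that a representation of $Q_2$ is a pair of $d_1 \times d_2$ matrices. First I would record the slope function: with $\omega = (1,0)$ we have $\mu(\rho) = d_1/(d_1+d_2)$, and semistability requires $\mu(\widehat\rho) \le \mu(\rho)$ for every proper subrepresentation $\widehat\rho = (\widehat V_1, \widehat V_2, \ldots)$ with $\dim \widehat\rho = (e_1,e_2)$, i.e.\ $e_1/(e_1+e_2) \le d_1/(d_1+d_2)$, equivalently $e_1 d_2 \le e_2 d_1$. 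Two families of destabilizing subrepresentations are forced by linear algebra: (a) for any subspace $\widehat V_1 \subseteq \ker \tau_1 \cap \ker \tau_2$ we get a subrepresentation $(\widehat V_1, 0)$ of dimension $(\dim \widehat V_1, 0)$, which destabilizes unless $\tau_1,\tau_2$ are jointly injective; (b) for any subspace $\widehat V_2 \supseteq \im \tau_1 + \im \tau_2$ we get $(V_1, \widehat V_2)$ of dimension $(d_1, \dim \widehat V_2)$, whose slope is $d_1/(d_1 + \dim\widehat V_2)$; taking $\widehat V_2 = \im \tau_1 + \im \tau_2$ (assuming this is proper, i.e.\ not all of $V_2$), semistability forces $\dim(\im\tau_1 + \im\tau_2) \ge d_2$, a contradiction. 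So a semistable $\rho$ must have $\tau_1,\tau_2$ jointly injective (hence $d_1 \le 2d_2$) and jointly surjective (hence $2d_1 \ge d_2$). This already confines $(d_1,d_2)$ to the cone between slopes $1/2$ and $2$.

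Next I would sharpen this using the subrepresentation generated by a single vector, or, more systematically, the classification of pairs of matrices up to simultaneous equivalence — the Kronecker pencil normal form. The key point is that any representation of $Q_2$ decomposes as a direct sum of indecomposables, and the indecomposable representations of $Q_2$ are exactly those with dimension vector $(k, k+1)$ (preprojective), $(k+1, k)$ (preinjective), or $(k,k)$ for $k \ge 1$ (regular, coming from the Jordan blocks / pencil $\lambda P_1 + \mu P_2$ at each point of $\PP^1$). A semistable representation of slope $\mu_0 = d_1/(d_1+d_2)$ is a direct sum (up to $S$-equivalence) of stable representations all of the same slope $\mu_0$; but the dimension vector of any summand is one of the indecomposable types just listed, so its slope is of the form $k/(2k+1)$, $1/2$, or $(k+1)/(2k+1)$. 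For all these summands to have the same slope $\mu_0$, and for $(d_1,d_2)$ to be their sum, $(d_1,d_2)$ must be proportional to one of $(k,k+1)$, $(1,1)$, $(k+1,k)$. Conversely I would remark (though the lemma only asserts the necessary direction) that each such vector genuinely occurs.

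The main obstacle I anticipate is handling the \emph{regular} (slope-$1/2$) part cleanly: one must rule out, for instance, a semistable representation of dimension vector $(2,3)$ built from an indecomposable $(1,1)$ summand plus an indecomposable $(1,2)$ summand — these have slopes $1/2$ and $1/3$, which differ, so such a direct sum is \emph{not} semistable, and this is exactly the kind of incompatibility the slope argument kills; but making the reduction ``semistable $\Rightarrow$ all indecomposable summands have equal slope'' airtight requires either invoking the general Harder--Narasimhan/Jordan--Hölder formalism for quiver representations or giving a short direct argument (if some summand has slope $> \mu_0$ it destabilizes; then the complementary summand has slope $< \mu_0$, and one checks it is itself a subrepresentation, contradiction). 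I would lean on the Kronecker classification and the elementary HN argument rather than rederiving everything. An alternative, more hands-on route that avoids the full normal form: argue directly that joint injectivity plus joint surjectivity, combined with the subrepresentation spanned by $\tau_1^{-1}(\langle w\rangle)$-type constructions, pins down $|d_1 - d_2| \le $ (number of Jordan blocks in a generic pencil direction) $\le \min(d_1,d_2)$ unless $d_1 = d_2$; but I expect the cleanest writeup is via the indecomposables of $Q_2$.
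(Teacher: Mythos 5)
Your argument is correct, but it is genuinely different from the paper's. You reduce the statement to the classical representation theory of the Kronecker quiver: stable (or, in your alternative phrasing, indecomposable direct) summands of a $(1,0)$-semistable $\rho$ all share the slope $d_1/(d_1+d_2)$, each such summand is indecomposable and hence has dimension vector among $(k,k+1)$, $(k+1,k)$, $(k,k)$ by the Kronecker/Kac classification, and since these slopes are pairwise distinct apart from the whole family $(k,k)$ being proportional to $(1,1)$, the total dimension vector is proportional to one of the three listed types. That chain of reasoning is sound (your slope-averaging argument for summands closes the gap you worried about, and only the direction ``indecomposable $\Rightarrow$ dimension vector in the list'' is needed, so your slightly loose phrase ``exactly those with dimension vector'' is harmless). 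The paper instead gives a self-contained, purely linear-algebraic proof: assuming $d_1<d_2$ (the case $d_1>d_2$ by dualizing), semistability forces $\tau_1,\tau_2$ injective; one then builds the filtration $S_0=V_1$, $S_i=\tau_1^{-1}(\tau_2(S_{i-1}))$, chooses vectors $\epsilon_i\in S_i\setminus S_{i+1}$ with $\tau_2(\epsilon_i)=\tau_1(\epsilon_{i+1})$, and uses their span as an explicit test subspace, obtaining a numerical inequality that forces $d_2=d_1+1$ or $(d_1,d_2)$ proportional to $(k,k+1)$. Your route is shorter and more conceptual (the three families are visibly the positive roots of the affine $A_1$ quiver, and the same argument gives existence, i.e.\ the converse the paper calls an easy exercise), but it imports the classification of Kronecker modules and the Jordan--H\"older/Krull--Schmidt formalism as black boxes; the paper's route uses nothing beyond the definition of semistability, which suits its expository purpose. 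Your preliminary step (a)/(b) giving joint injectivity/surjectivity and the cone $\tfrac12\le d_2/d_1\le 2$ is correct but is subsumed by the main argument and could be dropped.
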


\begin{proof} Suppose $\rho=(V_1,V_2,\tau_1,\tau_2)$ is a
representation of $Q_2$. 
We analyze first the case where $d_1 < d_2$.
The case $d_1> d_2$ is obtained by dualizing.{\footnote{The dual of
$\rho$ is $\rho^*=(V_2^*,V_1^*, \tau_1^*, \tau_2^*)$, and
$\rho$ is $(1,0)$-semistable if and only if $\rho^*$ is
$(1,0)$-semistable.}}

Since the slope of $\rho$ is $\frac{d_1}{d_1+d_2}$,  
$(1,0)$-semistabiliy is violated if there exists a
non-trivial subspace $\widehat{V}_1\subset V_1$ satisfying
\begin{equation}\label{btty}
\frac{\text{dim}(\widehat{V}_1)}{\text{dim}(\widehat{V}_1) +
\text{dim}\left(\tau_1(\widehat{V}_1)+\tau_2(\widehat{V}_1)\right)} >
\frac{d_1}{d_1+d_2} \ .
\end{equation}
If $\rho$ is $(1,0)$-semistable, the maps $\tau_1$
and $\tau_2$ must be injective (by taking
$\widehat{V_1}$ to be $\text{Ker}(\tau_i)$).

We now assume $\rho$ to be $(1,0)$-semistable and
construct a candidate for $\widehat{V}_1$ by the following
method. Let $S_0= V_1$, and let
$$S_{i} = \tau_1^{-1}\left(\tau_2(S_{i-1})\right)
\ \ \ \text{for}\ \ i>0 \ .$$ Since $S_i \subset S_{i-1}$, we obtain a
filtration
$$\ldots \subset S_3 \subset S_2 \subset S_1 \subset S_0 \ . $$
If $S_i$ is nonempty, then the inclusion  $S_i \subset S_{i-1}$
must be proper (otherwise $\widehat{V}_1=S_i$ violates
\eqref{btty}).
Since the codimension of $S_i \subset V_1$ is at most
$i(d_2-d_1)$, we see
$$S_{\lfloor \frac{d_1-1}{d_2-d_1} \rfloor} \neq 0 \ .$$
We can find a sequence of elements 
$\epsilon_i \in S_i \setminus S_{i+1}$ for 
$0\leq i \leq \lfloor \frac{d_1-1}{d_2-d_1} \rfloor$
such that
$$\tau_2(\epsilon_i) = \tau_1(\epsilon_{i+1})\ .$$
Let $\widehat{V}_1$ be span of $\epsilon_0, \ldots, 
\epsilon_{\lfloor \frac{d_1-1}{d_2-d_1} \rfloor}$.

Since the $\epsilon_i$ are independent, the dimension of
$\widehat{V}_1$ is $\lfloor \frac{d_1-1}{d_2-d_1} \rfloor +1$.
The dimension of
$\tau_1(\widehat{V}_1)+\tau_2(\widehat{V}_1)$ is at most
$\lfloor \frac{d_1-1}{d_2-d_1} \rfloor +2$, so
\begin{equation*}
\frac{\text{dim}(\widehat{V}_1)}{\text{dim}(\widehat{V}_1) +
\text{dim}\left(\tau_1(\widehat{V}_1)+\tau_2(\widehat{V}_1)\right)} \geq
\frac{\lfloor \frac{d_1-1}{d_2-d_1} \rfloor +1}
{2\lfloor \frac{d_1-1}{d_2-d_1} \rfloor +3}
\end{equation*}
Therefore, since $\rho$ is $(1,0)$-semistable,
we must have
$$\frac{\lfloor \frac{d_1-1}{d_2-d_1} \rfloor +1}
{2\lfloor \frac{d_1-1}{d_2-d_1} \rfloor +3} \leq \frac{d_1}{d_1+d_2}$$
or, equivalently,
\begin{equation}\label{kw2}
(d_2-d_1){\lfloor \frac{d_1-1}{d_2-d_1} \rfloor +d_1+d_2}\leq 3d_1\ .
\end{equation}

There are now two cases. If $d_2-d_1$ divides $d_1-1$, then
the inequality immediately implies $d_2=d_1+1$. 
If $d_2-d_1$ does not divide $d_1-1$, the inequality implies
$d_2-d_1$ divides $d_1$. In the second case, the dimension
vector is proportional to $(\frac{d_1}{d_2-d_1}, \frac{d_1}{d_2-d_1}+1)$.
\end{proof}

The construction of $(1,0)$-semistable representations of
$Q_2$ with dimension vectors in the directions permitted
by Lemma \ref{hnt} is an easy exercise.
We will discuss in more detail the directions $(1,2)$ and $(1,1)$.

The moduli spaces of stable back framed representation of $Q_2$
of dimension vector $(k,2k)$
are
empty for $k\geq 2$ and
$\shM_2^{(1,0),B}(1,2)$ is a point. Front framing
is slightly more complicated,
$$\shM_2^{(1,0),F}(1,2)= \mathbb{P}^1, \ \ \shM_2^{(1,0),F}(2,4)=\text{point},$$
and $\shM_2^{(1,0),F}(k,2k)$ is empty for $k>2$.
These results are obtained by simply unravelling the
definitions.

For dimension vector proportional to $(1,1)$, the framed
moduli spaces are always nonempty. Their topological Euler characteristics
are determined by the following result.

\begin{lemma} For $k\geq 1$, we have 
$\chi\big(\shM_2^{(1,0),B}(k,k)\big) = \chi\big(
\shM_2^{(1,0),F}(k,k)\big)= k+1$. 
\end{lemma}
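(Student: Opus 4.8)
The plan is to compute $\shM_2^{(1,0),B}(k,k)$ and $\shM_2^{(1,0),F}(k,k)$ explicitly and see that each is (birationally, or up to a stratification by affine cells) a variety whose Euler characteristic is $k+1$. First I would set up coordinates: a representation $\rho$ of $Q_2$ with $d_1 = d_2 = k$ is a pair $(\tau_1,\tau_2)$ of $k\times k$ matrices, and $(1,0)$-semistability (slope $\tfrac12$) says that for every subspace $\widehat V_1 \subset V_1$ one has $\dim\widehat V_1 \le \dim(\tau_1(\widehat V_1)+\tau_2(\widehat V_1))$; since by the proof of Lemma~\ref{hnt} both $\tau_i$ are injective, it is harmless to pass to the open locus where $\tau_1$ is invertible, replace $\rho$ by the single matrix $\phi = \tau_2\tau_1^{-1}$, and read semistability as: $\phi$ has no invariant subspace $W$ with $\dim W > $ (something) — in fact the condition becomes that $\phi$ has no eigenspace behaviour violating the slope, and one checks $(1,0)$-stable $\Leftrightarrow$ $\phi$ has no proper invariant subspace destabilizing, which for $d_1=d_2$ forces the pencil $\lambda\tau_1+\mu\tau_2$ to be "generic". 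The key point is that the $\mathbf G_{k,k}$-orbits on this locus are classified by the conjugacy class of $\phi$ together with the framing line, i.e. essentially by the Jordan type of $\phi$ (a point of $\CC$ worth of data once one fixes the framing, roughly $\PP^1$'s worth after projectivizing the pencil).

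The cleanest route is via the framed quiver description promised in Section~\ref{fr11}: a back-framed stable representation is a stable representation of the one-vertex-augmented quiver, and one knows (from \cite{ER}, or by the cell-decomposition technique of Reineke) that $\shM_2^{(1,0),B}(k,k)$ and $\shM_2^{(1,0),F}(k,k)$ carry torus actions with isolated fixed points, so $\chi$ is just the number of fixed points. So the second step is to introduce the torus $(\CC^*)^2$ acting by rescaling $\tau_1,\tau_2$ independently (descending to $\CC^*$ on the quotient) together with the maximal torus of $\mathbf{GL}_{d_1}\times\mathbf{GL}_{d_2}$, and enumerate the fixed loci. A fixed point corresponds to a representation that decomposes as a direct sum of one-dimensional "coordinate" pieces compatible with the framing; counting these combinatorially for $Q_2$ with dimension vector $(k,k)$ and a single framing line gives exactly $k+1$ configurations (the framing line can "enter" at one of $k+1$ positions in the obvious monomial normal form). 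I would then observe that the back and front counts agree either by the duality $\rho\mapsto\rho^*$ used in Lemma~\ref{hnt} — which swaps $(1,0)$-semistability with itself and interchanges back and front framings — giving $\chi(\shM_2^{(1,0),B}(k,k)) = \chi(\shM_2^{(1,0),F}(k,k))$ for free, so only one of the two counts has to be done by hand.

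Concretely, for the remaining hand computation I would do small $k$ to fix the normal form: for $k=1$ the moduli space is $\PP^1$ (a framed pair of scalars up to the residual $\CC^*$, i.e. the ratio $\tau_2/\tau_1\in\PP^1$), so $\chi = 2 = k+1$; for $k=2$ one writes the pencil in Kronecker normal form, finds the stable locus is $\PP^1$ blown up / a Hirzebruch-type surface with $\chi = 3$; and in general an induction (or the explicit fixed-point count above) gives the chain of $k+1$ cells. The main obstacle is the first step: pinning down exactly which $(\tau_1,\tau_2)$ are $(1,0)$-semistable resp. stable and showing that modulo $\mathbf G_{k,k}$ (together with the framing) the resulting space is a smooth projective variety admitting a torus action with precisely $k+1$ fixed points — equivalently, carefully verifying that the framed-moduli/augmented-quiver dictionary of \cite{ER} linearizes the problem and that no strictly semistable contributions survive. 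Once the torus action is in hand the Euler characteristic count is a short combinatorial exercise, and the duality argument halves the work.
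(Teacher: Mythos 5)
Your overall strategy --- computing the Euler characteristics as the number of fixed points of the $\CC^*\times\CC^*$-action rescaling $\tau_1$ and $\tau_2$ --- is exactly the paper's, but two of your intermediate steps would fail, and one of them sits precisely where the number $k+1$ has to come from. First, for $d_1=d_2=k$ the injectivity argument from the proof of Lemma \ref{hnt} does not apply (it is carried out for $d_1<d_2$), and $(1,0)$-semistability does \emph{not} force $\tau_1$ to be invertible: already for $k=1$ the representation $\tau_1=0$, $\tau_2=1$ is semistable and is one of the two fixed points, and for $k=2$ the paper's second listed fixed point has $\tau_1$ nilpotent. (What is true is that a $(k,k)$-representation is semistable iff some member of the pencil $\lambda\tau_1+\mu\tau_2$ is invertible.) So passing to the open locus where $\tau_1$ is invertible and replacing the pair by $\phi=\tau_2\tau_1^{-1}$ is not harmless: it discards torus-fixed points, and a fixed-point count on that locus gives the wrong answer unless you separately handle the complement.

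Second, the fixed points are not representations that ``decompose as a direct sum of one-dimensional coordinate pieces compatible with the framing,'' and ``the framing line enters at one of $k+1$ positions in a monomial normal form'' does not describe them; this enumeration is the real content of the lemma and is missing from your argument. In the paper's explicit $k=2$ list, two of the three fixed points have indecomposable underlying representation ($\tau_1=\mathrm{Id}$, $\tau_2$ a nilpotent Jordan block, and vice versa), and the third has framing line spanned by $(1,1)$, not a coordinate line --- that point is fixed only because the torus action can be compensated by automorphisms of the underlying representation. The actual fixed points are the (essentially uniquely framed) representations $R_0^{(a)}\oplus R_\infty^{(k-a)}$, $a=0,\dots,k$, where $R_0^{(a)}$ has $\tau_1=\mathrm{Id}$ and $\tau_2$ a single nilpotent Jordan block of size $a$, and $R_\infty^{(k-a)}$ has the roles of $\tau_1,\tau_2$ reversed; verifying that these and only these are fixed and framed-stable is the count you need. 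Your $k=2$ cross-check is also off: the space is $\PP^2$ with $\chi=3$, not a Hirzebruch-type surface, which would have $\chi=4$. Finally, the duality $\rho\mapsto\rho^*$ turns a back framing (a line in $V_1$) into a quotient of $V_1^*$, i.e.\ a coframing rather than a front framing, so the claimed ``for free'' identification of the back and front counts needs an additional argument; the paper simply repeats the fixed-point analysis in the front-framed case and records that both spaces are in fact $\PP^k$.
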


\begin{proof}
The simplest approach is to count the fixed points of  the 
$\mathbb{C}^* \times \mathbb{C}^*$-action on
the framed moduli spaces obtained by scaling $\tau_1$ and 
$\tau_2$,
$$(\xi_1,\xi_2) \cdot \left( \Big(\mathbb{C}^k,\mathbb{C}^k,\tau_1,\tau_2
\Big), L_i
 \right) =
\left(
 \Big(\mathbb{C}^k,\mathbb{C}^k,\xi_1 \tau_1,\xi_2\tau_2 \Big),L_i\right)\ .$$
Certainly, 
$\shM_2^{(1,0),B}(1,1)$
and  $
\shM_2^{(1,0),F}(1,1)$
are both $\mathbb{P}^1$ with fixed points given by
$$\tau_1=1,\  \tau_2=0, \ \ \text{and} \ \ \tau_1=0, \ \tau_2=1\ $$
and unique choice for the framings.

The moduli spaces with dimension vector $(2,2)$ are the
first nontrivial cases.
Two $2\times 2$ matrices together with a non-zero vector
in $\mathbb{C}^2$  
specify a back framed representation of $Q_2$.
The three $\mathbb{C}^* \times \mathbb{C}^*$-fixed
points of $\shM_2^{(1,0),B}(2,2)$ are given by
the data
$$\left\{ \  \tau_1=\left(\begin{array}{cc} 1& 0 \\ 0 & 1 \end{array}\right),\ 
\tau_2=\left(\begin{array}{cc} 0& 1 \\ 0 & 0 \end{array}\right),\ 
L_1=\left(\begin{array}{c} 0 \\ 1  \end{array}\right) \right\},$$

$$\left\{ \  \tau_1=\left(\begin{array}{cc} 0& 1 \\ 0 & 0 \end{array}\right),\ 
\tau_2=\left(\begin{array}{cc} 1& 0 \\ 0 & 1 \end{array}\right),\ 
L_1=\left(\begin{array}{c} 0 \\ 1  \end{array}\right) \right\},$$

$$\left\{ \  \tau_1=\left(\begin{array}{cc} 1& 0 \\ 0 & 0 \end{array}\right),\ 
\tau_2=\left(\begin{array}{cc} 0& 0 \\ 0 & 1 \end{array}\right),\ 
L_1=\left(\begin{array}{c} 1 \\ 1  \end{array}\right) \right\}.$$
The analysis for $\shM_2^{(1,0),F}(2,2)$ is similar.
We leave the higher $k$ examples for the reader to investigate.

A treatment of torus actions on moduli of spaces of representations
of quivers can be found in \cite{TTT}. In fact,
$\shM_2^{(1,0),B}(k,k) \stackrel{\sim}{=}
\shM_2^{(1,0),F}(k,k) \stackrel{\sim}{=} \mathbb{P}^k$. 
\end{proof}

\subsection{Reineke's Theorem}\label{fr22}
The main result relating commutators in the tropical
vertex group to the Euler characteristics of the moduli spaces of 
representations of $Q_m$ can now be stated.
Consider the elements
$$S_{m}=\theta_{(1,0),(1+tx)^{m}} \ \  \text{and} \ \ 
T_{m}=\theta_{(0,1), (1+ty)^{m}}\ $$
of the tropical vertex group.
The unique
factorization
\begin{equation}\label{jjtrr}
    T_{m}^{-1} \circ S_{m} \circ T_{m}  \circ S_{m}^{-1} =
\stackrel{\rightarrow} \prod \theta_{(a,b),f_{a,b}}\ 
\end{equation}
associates a function
$$f_{a,b}\in \mathbb{C}[x^ay^b][[t]]$$
to every   primitive
vector $(a,b)\in \mathbb{Z}^2$ lying strictly in the first quadrant.
Two more functions are obtained from the
 topological Euler characteristics of the
moduli spaces of back and front framed representations of $Q_m$,
$$B_{a,b} = 1+\sum_{k\geq 1} \chi\Big( \shM^{(1,0),B}_m(ak,bk)\Big)\cdot  
(tx)^{ak}\ (ty)^{bk}\ ,$$
$$F_{a,b} = 1+\sum_{k\geq 1} \chi\Big( \shM^{(1,0),F}_m(ak,bk)\Big)\cdot  
(tx)^{ak}\ (ty)^{bk} 
\ .$$

\vspace{10pt}
\noindent{\bf Theorem 1.} (Reineke) {\em The three
functions are related by the equations}
$$f_{a,b}=(B_{a,b})^{\frac{m}{a}}= (F_{a,b})^{\frac{m}{b}}\ . $$
\vspace{0pt}

Theorem 1 is proven in \cite{R2}. Reineke calculates the Euler
characteristics of the framed moduli spaces by counting points
over finite fields. The connection to the
tropical vertex group is made via a homomorphism from the
Hall algebra following the wall-crossing philosophy of
\cite{ks2}. The relevant wall-crossing is from the $(0,1)$
to $(1,0)$ stability condition.
The ordered product factorization is then
obtained from the Harder-Narasimhan filtration in the
abelian category of representations of $Q_m$.

\subsection{Examples}
For $Q_1$, 
 the moduli spaces 
of framed representations are empty for
slopes (strictly in the first quadrant) other than 1.
Moreover, 
$\shM^{(1,0),B}_1(k,k)$ and $\shM^{(1,0),F}_1(k,k)$
are points if $k=1$ and empty otherwise.
Theorem 1 then 
immediately recovers the commutator calculation of
Figure
\ref{diagram11}.

 For $Q_2$ and primitive vector $(a,b)=(1,2)$,
the results of Section 2.6 yield
\begin{eqnarray*}
B_{1,2} & =& 1+ t^3 x y^2\ ,\\
F_{1,2} & =& 1+2t^3 xy^2+ t^6 x^2y^4\ .
\end{eqnarray*}
By the commutator results of Section \ref{exxc}, we see
$$f_{1,2}= (1+t^3 xy^2)^2$$
verifying Theorem 1.
For $Q_2$ and primitive vector $(a,b)=(1,1)$,
we obtain 
\begin{eqnarray*}
B_{1,1} & =& (1-  t^2x y)^{-2}\ ,\\
F_{1,1} & =& (1-t^2 xy)^{-2}\ .
\end{eqnarray*}
By the commutator results of Section \ref{exxc}, we see
$$f_{1,1}= (1+t^2 xy)^{-4}$$
again verifying Theorem 1.

\section{Rational curves on toric surfaces}

\subsection{Toric surfaces} \label{ttss}

Let $(a,b)\in \mathbb{Z}^2$ be a primitive vector lying 
strictly in the first quadrant.
The rays generated by $(-1,0)$, $(0,-1)$, and $(a,b)$ determine
a complete rational fan{\footnote{We refer the reader
to \cite{ful} for background on toric varieties.}} in $\mathbb{R}^2$,
see Figure \ref{P2abfan}.

\begin{figure}
\input{P2abfan.pstex_t}
\caption{}
\label{P2abfan} 
\end{figure}

Let $X_{a,b}$
be the associated toric surface
 with toric divisors
$$D_1, D_2, D_\out \subset X_{a,b}$$
corresponding to the respective rays.
Concretely,
$X_{a,b}$ is the weighted projective plane obtained
by the quotient
$$X_{a,b} = \big( \mathbb{C}^3 -  \{0\}\big) \big/ \mathbb{C}^*$$
where the $\mathbb{C}^*$-action is given by
$$\xi\cdot(z_1,z_2,z_3) = (\xi^a z_1,\xi^bz_2,\xi z_3) \ .$$
The divisors $D_1$, $D_2$ and $D_\out$
 correspond respectively  to the vanishing loci of $z_1$, $z_2$, and $z_3$.

Let 
$X_{a,b}^o \subset X_{a,b}$
be the open surface obtained by removing the three
toric fixed points
$$[1,0,0], \ [0,1,0], \ [0,0,1]\ .$$
 Let 
$D_1^o, D_2^o, D_\out^o$ be the restrictions of the
toric divisors to $X_{a,b}^o$.

We denote {\em ordered 
partitions} ${\bf Q}$ of length $\ell$ by  $q_1+ \ldots + q_{\ell}$. 
Ordered partitions differ from usual partitions in two basic ways.
First,
the ordering of the parts matters.
Second, the parts $q_i$ are
required only to be non-negative integers (0 is permitted).
The {size}  $|{\bf Q}|$ is the sum of the parts.

Let $k\geq 1$.
Let 
${\bf P}_a=p_1+ \ldots+ p_{\ell_1}$ and ${\bf P}_b=p'_1 + \ldots +p'_{\ell_2}$ 
be ordered partitions of size $a k$ 
and $bk$ respectively. Denote the pair by ${\bf{P}}=({\bf P}_a,{\bf P}_b)$.
Let
$$\nu: X_{a,b}[{\bf P}] \rightarrow X_{a,b}$$
be the blow-up of $X_{a,b}$ along $\ell_1$ and $\ell_2$
distinct points of $D^o_1$ and $D^o_2$.
Let 
$$X^o_{a,b}[{\bf P}] = \nu^{-1}(X^o_{a,b}).$$

Let $\beta_k \in H_2(X_{a,b},\mathbb{Z})$ be the unique
class with intersection numbers
$$\beta_k \cdot D_1 =ak , \ \ \beta_k \cdot D_2 = bk, \ \ 
\beta_k\cdot D_\out =k.$$
Let $E_i$ and $E'_j$ be the $i^{th}$ and $j^{th}$
exceptional divisors over $D^o_1$ and $D^o_2$.
Let $$\beta_k[{\bf P}] = \nu^*(\beta_k) -\sum_{i=1}^{\ell_1} p_i [E_i]
-\sum_{j=1}^{\ell_2} p'_j [E'_j]\ \in H_2(X_{a,b}[{\bf P}], \mathbb{Z}) .$$

\label{ggb}

\subsection{Moduli of maps} \label{mom}
Let $\overline{\foM}(X^o_{a,b}[{\bf P}]/D_\out^o)$ denote the moduli 
space of stable relative maps\footnote{We refer the
reader to \cite{junex} for an introduction to
relative stable maps.} of genus 0 curves representing the
class $\beta_k[{\bf P}] $ and with full contact order  
$k$ at an unspecified point of $D^o_\out$.
By Proposition 4.2 of \cite{GPS}, the moduli space
 $\overline{\foM}(X^o_{a,b}[{\bf P}]/D_\out^o)$ is
proper (even though the target geometry is open).
We can easily calculate the virtual dimension,
\begin{eqnarray*}
\text{dim}^{vir} \ \overline{\foM}(X^o_{a,b}[{\bf P}]/D_\out^o) & = &
c_1(X^o_{a,b}[{\bf P}])\cdot \beta_k[{\bf P}] - 1 - (k-1) \\ & = &
\left(\nu^*c_1(X^o_{a,b})
-\sum_{i=1}^{\ell_1} [E_i]
-\sum_{j=1}^{\ell_2} [E'_j]
\right)\cdot \beta_k[{\bf P}] - k \\
& = & ak+bk +k -ak-bk -k \\
& = & 0\ ,
\end{eqnarray*}
where the formula for the Chern class of a toric variety,
$$c_1(X^o_{a,b})=D_1+D_2+D_\out,$$ is used
in the second line.

Since $\overline{\foM}(X^o_{a,b}[{\bf P}]/D_\out^o)$ is proper
of virtual dimension 0, we may define the associated
Gromov-Witten invariant by
$$N_{a,b}[{\bf P}] 
= \int_{[\overline{\foM}(X^o_{a,b}[{\bf P}]/D_\out^o)]^{vir}}1 \ \in \mathbb{Q} \ .$$
Proposition 4.2 of \cite{GPS} shows  
$N_{a,b}[{\bf P}]$ does {\em not} depend upon the
locations of the blow-ups of $X^0_{a,b}$.

Naively, $N_{a,b}[{\bf P}]$ counts rational curves on $X_{a,b}^0$
with full contact at a single (unspecified) point of $D_\out$ and
 with specified multiple points of orders given
by $\bf{P}$ on $D_1^0$ and $D_2^0$. However, the moduli space
$\overline{\foM}(X^o_{a,b}[{\bf P}]/D_\out^o)$
may include multiple covers and components of excess dimension. 
In particular, $N_{a,b}[{\bf P}]$ need not be integral (nor even
positive).

\subsection{Formula}
The main result relating commutators in the tropical vertex
group to rational curve counts on toric surfaces can now be stated.
Consider the elements
$$S_{\ell_1}=\theta_{(1,0),(1+tx)^{\ell_1}} \ \  \text{and} \ \ 
T_{\ell_2}=\theta_{(0,1), (1+ty)^{\ell_2}}\ $$
of the tropical vertex group.
The unique
factorization
\begin{equation}\label{jjtrrr}
    T_{\ell_2}^{-1} \circ S_{\ell_1} \circ T_{\ell_2}  
\circ S_{\ell_1}^{-1} =
\stackrel{\rightarrow} \prod \theta_{(a,b),f_{a,b}}\ 
\end{equation}
associates a function
$$f_{a,b}\in \mathbb{C}[x^ay^b][[t]]$$
to every   primitive
vector $(a,b)\in \mathbb{Z}^2$ lying strictly in the first quadrant.
Since the series $f_{a,b}$ 
 starts with 1, we may take the logarithm.
Homogeneity constraints determine the behavior of the
variable $t$. We define the coefficients
$c^k_{a,b}(\ell_1,\ell_2) \in \mathbb{Q}$ by
$$\log f_{(a,b)} = \sum_{k\geq 1}  k\ c^k_{a,b}(\ell_1,\ell_2) \cdot  
(tx)^{ak}\ (ty)^{bk} .$$
The function $f_{a,b}$ is linked to Gromov-Witten theory
by the following result proven in \cite{GPS}.

\vspace{10pt}
\noindent{\bf Theorem 2.} {\em
We have
$$c^k_{a,b}(\ell_1,\ell_2) = \sum_{|{\bf P}_a| = ak} \ \sum_{|{\bf P}_b|=bk} 
N_{a,b}[({\bf P}_a,{\bf P}_b)]$$
where the sums are over all ordered
partitions ${\bf P}_a$ of size {\em ak} and length $\ell_1$
  and ${\bf P}_b$
of size {\em bk} and length $\ell_2$.} 
\vspace{10pt}

The proof of Theorem 2 starts with the relationship of
the tropical vertex group to tropical curve counts
on toric surfaces. A transition to holomorphic curve
counts with relative constraints is made via \cite{nisi}. Finally,
a degeneration argument is used to separate the virtual and
enumerative geometry of the invariant $N_{a,b}[{\bf P}]$. 
The virtual aspects are handled by the multiple cover
formulas of \cite{Brypan} and the enumerative aspects
by the tropical/holomorphic curve counts.

\subsection{Examples} We consider the examples of \S\ref{exxc},
focusing on the functions attached to the ray of slope $1$. For
$\ell_1=\ell_2=1$, 
\[
\log f_{1,1}=\log (1+t^2xy)=\sum_{k=1}^{\infty} k \cdot {(-1)^{k+1}
\over k^2} \cdot (tx)^k (ty)^k.
\]
Consider $\PP^2$ with the three
toric divisors $D_1$, $D_2$ and $D_{\out}$ making up the toric boundary. 
There is a unique line passing through a point selected
on $D_1$ and a  point selected on 
$D_2$. Hence,
$N_{1,1}[(1,1)]=1$. There are no
other rational curves in $\PP^2$ passing through these two points
and maximally tangent to $D_{\out}$.
The result
 $$N_{1,1}[(k,k)]=\frac{(-1)^{k+1}}{k^2}$$
comes from multiple covers of the line totally branched over the
intersection with $D_{\out}$. The multiple
cover contribution is computed in \cite{Brypan}.

Next, consider the ray of slope 1 for $\ell_1=\ell_2=2$. We calculate
\[
\log f_{1,1}=-4\log(1-t^2xy)=4\sum_{k=1}^{\infty}
k\cdot {1\over k^2}\cdot (tx)^k (ty)^k.
\]
We now must choose two points each on $D_1$ and $D_2$. As above,
$N_{(1,1)}[(1+0,1+0)]=1$ because there is exactly
one line through two points. Similarly 
$$N_{1,1}[(1+0,0+1)]=
N_{1,1}[(0+1,1+0)]=N_{1,1}[(0+1,0+1)]=1,$$
 giving the desired
total for $c^1_{1,1}(2,2)=4$. 
The invariant
 $$N_{1,1}[(2+0,2+0)]=-1/4$$
is obtained 
from the double covers of the line.
Hence, double covers of the four lines contribute $-1$ to $c^2_{1,1}(2,2)$.
On the other hand, there is a pencil of conics passing through the
four chosen points. Being tangent to $D_{\out}$ is a quadratic condition,
so $$N_{1,1}[(1+1,1+1)]=2.$$ 
Putting the calculation together yields
$$c^2_{1,1}(2,2)=(-1)+2=1.$$
 All remaining contributions
to $c^k_{1,1}(2,2)$ for $k>2$ come from multiple covers of either
one of the lines or one of the conics.

For the ray of slope 1 for $\ell_1=2$, $\ell_2=3$, we have
\[
\log f_{1,1}=6 (tx)(ty)+2\cdot {9\over 2} (tx)^2(ty)^2
+3\cdot {20\over 3} (tx)^3(ty)^3+\cdots.
\]
The coefficient $c^{1}_{1,1}(2,3)=6$ counts the number of lines passing
through one of two points on $D_1$ and one of three points on $D_2$.
The coefficient 
$$c^{2}_{1,1}(2,3)=9/2=6-6/4$$
is obtained as follows.
There are six conics
passing through the two chosen points on $D_1$ and two of the three
chosen points on $D_2$ and tangent to $D_{\out}$. The $-6/4$ accounts
for double covers of the lines. It is possible to compute 
$$N_{1,1}[2+1,1+1+1]=N_{1,1}[1+2,1+1+1]=3.$$
These are the only
contributions from non-multiple covers to $c^3_{1,1}(2,3)$ ---
corresponding to plane cubics with a node at one of the two chosen
points on $D_1$ and passing through all chosen points, with $D_{\out}$
being an inflectional tangent. On the other
hand, the triple covers of each line contribute $1/9$, for a total
of 
$$c^3_{1,1}(2,3)=3+3+6/9=20/3.$$
 For higher $k$, there continue
to be contributions from curves which are not just multiple covers
of curves already found.

\subsection{Correspondence}
Theorems 1 and 2 together yield an interesting correspondence
between the moduli space of rational curves on toric sufaces and
the moduli spaces of quiver representations.

\vspace{10pt}
\noindent{\bf Corollary 3.} {\em For every $m>0$ and primitive
$(a,b) \in \mathbb{Z}^2$ lying strictly in the first quadrant, 
we have
\begin{multline*}
\exp\left( \sum_{k\geq 1}  \sum_{|{\bf P}_a| = ak} \ \sum_{|{\bf P}_b|=bk} 
k N_{a,b}[({\bf P}_a,{\bf P}_b)]
 \cdot  
(tx)^{ak}\ (ty)^{bk} \right)\\ =
\left( 1+\sum_{k\geq 1} \chi\Big( \shM^{(1,0),B}_m(ak,bk)\Big)\cdot  
(tx)^{ak}\ (ty)^{bk}
\right)^{\frac{m}{a}}\\
= 
\left(1+\sum_{k\geq 1} \chi\Big( \shM^{(1,0),F}_m(ak,bk)\Big)\cdot  
(tx)^{ak}\ (ty)^{bk} \right)^{\frac{m}{b}} \ 
\end{multline*} 
where the sums in the first line are over all ordered
partitions ${\bf P}_a$ of size {\em ak} and length $m$
  and ${\bf P}_b$
of size {\em bk} and length $m$.}
\vspace{10pt}

Corollary 3 is a correspondence between rational curve counts for 
the toric surface $X_{a,b}$ and Euler characteristics
of framed moduli spaces of quiver representations
of $Q_m$ with dimension vectors proportional to $(a,b)$.
At the moment, no direct geometric argument for Corollary 3
is known. 
Also, while  parallels between Corollary 3 and the correspondences
of \cite{PT} are apparent (both link Gromov-Witten invariants
to possibly virtual
Euler characteristics of moduli spaces of framed sheaves),
again no precise connection is known.

Theorem 2 as stated is more general than Theorem 1 since $\ell_1$ and
$\ell_2$ are not required to be equal. Richer versions of Theorem 1 
which capture the $\ell_1\neq \ell_2$ cases can be obtained
from more complicated quiver constructions.{\footnote{M. Reineke
has explained to us a method using certain bipartite
quivers (up to symmetric group actions). A. King has made a
similar proposal.}}
Finally, a version of Theorem 2 which casts the commutator
calculations in the tropical vertex group (over
many variables instead of just $t$) as equivalent to
the determination of the invariants 
$N_{a,b}[({\bf P}_a,{\bf P}_b)]$ can be found in \cite{GPS}.

\section{Scattering patterns}

\subsection{Directions}
Consider the basic elements
$$S_{\ell_1}=\theta_{(1,0),(1+tx)^{\ell_1}} \ \  \text{and} \ \ 
T_{\ell_2}=\theta_{(0,1), (1+ty)^{\ell_2}}\ $$
of the tropical vertex group.
The unique
factorization
\begin{equation}\label{jjtrrrr}
    T_{\ell_2}^{-1} \circ S_{\ell_1} \circ T_{\ell_2}  
\circ S_{\ell_1}^{-1} =
\stackrel{\rightarrow} \prod \theta_{(a,b),f_{a,b}}\ 
\end{equation}
associates a function
$$f_{a,b}\in \mathbb{C}[x^ay^b][[t]]$$
to every   primitive
vector $(a,b)\in \mathbb{Z}^2$ lying strictly in the first quadrant.

\vspace{10pt}
\noindent{\bf Question 4.} For which directions is $f_{a,b} \neq 1$ ?
\vspace{10pt}

The {\em scattering pattern} associated to $\ell_1$ and $\ell_2$
consists of the directions in the first quadrant for which
$f_{a,b}\neq 1$. We have seen several examples of scattering
patterns in Section \ref{exxc}. Our goal here is to give
an answer to Question 4 via Theorem 2 and the
the classical geometry of curves on toric surfaces.

\subsection{Curves}
If $f_{a,b} \neq 1$, then there must exist, by Theorem 2,
 a nonvanishing invariant
\begin{equation*}
N_{a,b}[({\bf P}_a,{\bf P}_b)] \neq 0,
\end{equation*}
where  ${\bf P}_a$ is of size {\em ak} and length $\ell_1$
  and ${\bf P}_b$
of size {\em bk} and length $\ell_2$.
The nonvanishing of the invariant implies the nonemptiness
of the corresponding moduli space,
$$\overline{\foM}(X^o_{a,b}[({\bf P}_a,{\bf P}_b)]/D_\out^o) \neq \emptyset \ . $$
Recall, following the notation of Section \ref{ggb},
$$\nu: X^o_{a,b}[({\bf P}_a,{\bf P}_b)]\rightarrow X^o_{a,b}$$
is the blow-up along $\ell_1$ and $\ell_2$ distinct points
of $D_1^o$ and $D_2^o$ respectively.

Let $[\phi]\in \overline{\foM}(X^o_{a,b}[({\bf P}_a,{\bf P}_b)]/D_\out^o)$
be a stable relative map,
$$(C,p) \stackrel{\phi}{\rightarrow} \foX^o_{a,b}[({\bf P}_a,{\bf P}_b)]
\stackrel{\pi}\rightarrow
X^o_{a,b}[({\bf P}_a,{\bf P}_b)]
,$$
satisfying the following properties:
\begin{enumerate}
\item[(i)] $C$ is a complete connected curve of arithmetic genus 0 with at worst nodal
            singularities,
\item[(ii)] $\foX^o_{a,b}[({\bf P}_a,{\bf P}_b)]\rightarrow
X^o_{a,b}[({\bf P}_a,{\bf P}_b)]$ is a destabilization{\footnote{A destabilization
along a relative divisor is obtained by attaching
a finite number of bubbles each of which is
a $\mathbb{P}^1$-bundle over the divisor.
We refer the reader to 
Section 1 of \cite{junex} for an introduction to the destabilizations
required for stable relative maps. Li uses the term
{\em expanded degeneration} for our destabilizations.}} 
along the
relative divisor $D_\out^o$,
\item[(iii)] $C$ has full contact via $\phi$ with  $D_\out^o$ of order $k$ at $p$.
\end{enumerate}
For the calculation of intersection numbers, we will often view the
composition 
$$\pi \circ \phi: C \rightarrow X^o_{a,b}[({\bf P}_a,{\bf P}_b)]\subset
X_{a,b}[({\bf P}_a,{\bf P}_b)]$$
as having image in the complete surface.
Let 
$$D^{\mathrm{strict}}_i \subset X_{a,b}[({\bf P}_a,{\bf P}_b)]$$
be the strict transformation under $\nu$ of $D_i$.

\begin{lemma} Let $C'\subset C$ be an irreducible component on
which $\pi \circ \phi$ is nonconstant. Then,
\begin{equation*}
C' \cdot  D^{\text{\em strict}}_1 =  C' \cdot D^{\text{\em strict}}_2 = 0 \ .
\end{equation*} \label{jj23}
\end{lemma}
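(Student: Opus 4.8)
The plan is to show that a nonconstant component $C'$ cannot meet the strict transforms $D_1^{\mathrm{strict}}$ or $D_2^{\mathrm{strict}}$ at all, using the fact that the ambient moduli space is a moduli space of maps to the \emph{open} surface $X^o_{a,b}$. First I would observe that the target of $\phi$ is $\foX^o_{a,b}[({\bf P}_a,{\bf P}_b)]$, which maps via $\pi$ to $X^o_{a,b}[({\bf P}_a,{\bf P}_b)] = \nu^{-1}(X^o_{a,b})$, and this preimage explicitly excludes everything lying over the three toric fixed points $[1,0,0]$, $[0,1,0]$, $[0,0,1]$ of $X_{a,b}$. Consequently, when we view $\pi\circ\phi$ as a map into the complete surface $X_{a,b}[({\bf P}_a,{\bf P}_b)]$, its image is disjoint from the full fibers of $\nu$ over those three fixed points, and in particular disjoint from $D_1^{\mathrm{strict}}\cap D_\out^{\mathrm{strict}}$, from $D_2^{\mathrm{strict}}\cap D_\out^{\mathrm{strict}}$, and from $D_1^{\mathrm{strict}}\cap D_2^{\mathrm{strict}}$ (the latter two points being the $\nu$-images of two of the toric fixed points, noting $D_1$ and $D_2$ meet only at $[0,0,1]$ in the original fan picture).

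Next I would use the structure of a stable relative map. The curve $C$ meets the relative divisor $D_\out^o$ only at the single marked point $p$, where the contact is of full order $k$, and the destabilization $\foX^o_{a,b}[({\bf P}_a,{\bf P}_b)]\to X^o_{a,b}[({\bf P}_a,{\bf P}_b)]$ contracts the bubble components down onto $D_\out^o$. So if $C'$ is a component mapping nonconstantly into the \emph{original} surface (not into a bubble), then $\pi\circ\phi(C')$ meets $D_\out^{\mathrm{strict}}$ only at the image of $p$, which lies in $D_\out^o$, i.e. away from the two points $D_1^{\mathrm{strict}}\cap D_\out^{\mathrm{strict}}$ and $D_2^{\mathrm{strict}}\cap D_\out^{\mathrm{strict}}$; and if $C'$ maps into a bubble, then $\pi\circ\phi(C')$ lies entirely in $D_\out^o$, hence is disjoint from both $D_1^{\mathrm{strict}}$ and $D_2^{\mathrm{strict}}$ for the same reason.

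Now I would run the following argument for $D_1^{\mathrm{strict}}$ (the case of $D_2^{\mathrm{strict}}$ is symmetric). Consider the image curve $Z = (\pi\circ\phi)(C')$, an irreducible curve in $X_{a,b}[({\bf P}_a,{\bf P}_b)]$. We want $Z\cdot D_1^{\mathrm{strict}} = 0$. By the previous paragraph, $Z$ meets $D_1^{\mathrm{strict}}$, if at all, only in the open part $D_1^o$ minus the blown-up points — that is, in the locus of $D_1^{\mathrm{strict}}$ lying over $X^o_{a,b}$. But the whole image of $\pi\circ\phi$ lies in $X^o_{a,b}[({\bf P}_a,{\bf P}_b)]$, so I must rule out any interior intersection. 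For this I would use a deformation/rigidity argument exactly as in \cite{GPS}: the exceptional divisors $E_i$ over the points of $D_1^o$ have $E_i\cdot\beta_k[{\bf P}] = p_i$, and the strict transform satisfies $D_1^{\mathrm{strict}} = \nu^*D_1 - \sum_i E_i$, so $D_1^{\mathrm{strict}}\cdot\beta_k[{\bf P}] = ak - \sum_i p_i = ak - |{\bf P}_a| = 0$ since $|{\bf P}_a| = ak$. Thus $D_1^{\mathrm{strict}}$ has total intersection zero with the whole class $\beta_k[{\bf P}]$; since $D_1^{\mathrm{strict}}$ is an irreducible curve (indeed a smooth rational curve) and the total image of $\pi\circ\phi$ is an effective curve in class $\beta_k[{\bf P}]$ not containing $D_1^{\mathrm{strict}}$ (it cannot, since $D_1^{\mathrm{strict}}$ runs through an excluded fixed point), every component of the image, in particular $Z$, must have nonnegative intersection with $D_1^{\mathrm{strict}}$, and these nonnegative contributions sum to zero, forcing each to vanish; hence $C'\cdot D_1^{\mathrm{strict}} = 0$.

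I expect the main obstacle to be the bookkeeping in the last step: one must be careful that $D_1^{\mathrm{strict}}$ is not a component of the image curve (this is where disjointness from the toric fixed points is essential, since $D_1^{\mathrm{strict}}$ passes through $\nu$-preimages of those points), and one must correctly account for the destabilization bubbles when writing the image class as $\beta_k[{\bf P}]$ — the components mapping into bubbles push forward to zero or to multiples of a fiber class, which has zero intersection with $D_1^{\mathrm{strict}}$ anyway. Once these are pinned down, the effectivity-plus-degree-zero argument is immediate.
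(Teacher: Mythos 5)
Your proposal is correct and follows essentially the same route as the paper: since the image of $\pi\circ\phi$ lies in the open locus $X^o_{a,b}[({\bf P}_a,{\bf P}_b)]$, no component can dominate $D_i^{\mathrm{strict}}$, so each component meets it nonnegatively, while the total class $\beta_k[{\bf P}]$ satisfies $\beta_k[{\bf P}]\cdot D_1^{\mathrm{strict}} = ak - |{\bf P}_a| = 0$, forcing every individual intersection to vanish. The extra discussion of bubbles and of rigidity is harmless but unnecessary; the paper's proof is exactly your effectivity-plus-degree-zero argument.
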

\begin{proof}
Since $\pi\circ \phi(C')\subset X^o_{a,b}[({\bf P}_a,{\bf P}_b)]$,
the component $C'$ 
can not dominate $D^{\text{strict}}_i$. Hence,
$$
C' \cdot  D^{\text{strict}}_i \geq 0 \ .$$
The intersection number of
$C$ with  $D^{\text{strict}}_1$ is
$$ C\cdot D^{\text{strict}}_1 = \beta_k \cdot D_1 + \sum_{i=1}^{\ell_1} p_i E_i^2 = 0$$
where ${\bf P}_a= p_1+ \ldots+ p_{\ell_1}$ and
$E_i$ are the exceptional divisors of $\nu$ over $D_1$.
Therefore, if $C'\cdot D^{\text{strict}}_1>0$, then
\begin{equation*}
\overline{C\setminus C'} \cdot D^{strict}_1<0 \ 
\end{equation*}
which is impossible since no component of
$C$ dominates $D^{\text{strict}}_1$.
The argument for $D^{\text{strict}}_2$ is identical.
\end{proof}

\begin{lemma} Let $C'\subset C$ be an irreducible component on
which $\pi \circ \phi$ is nonconstant.
The set \label{jj24}
$$C' \ \cap \ (\pi \circ \phi)^{-1}(D_\out^o)$$ 
consists of a single point.
\end{lemma}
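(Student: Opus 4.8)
The plan is to argue that the curve $C'$ meets $D_\out^o$ in a single point by combining the local structure of stable relative maps with a degree count. First I would recall the setup: the map $\pi\circ\phi:C\to X_{a,b}[({\bf P}_a,{\bf P}_b)]$ factors through a destabilization $\foX^o_{a,b}$ whose last ``level'' is a $\PP^1$-bundle over $D_\out^o$; by definition of a relative stable map, the only point of $C$ sitting over the relative divisor $D_\out^o$ (after contracting bubbles) is the marked point $p$, and $C$ has contact order exactly $k$ there. So the preimage $(\pi\circ\phi)^{-1}(D_\out^o)$, as a subset of the \emph{complete} surface $X_{a,b}[({\bf P}_a,{\bf P}_b)]$, is supported at a single point of $C$ namely $p$ (possibly together with whole components of $C$ that are contracted by $\pi\circ\phi$ into $D_\out$, but those are constant and excluded by hypothesis on $C'$).

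Next I would localize to the component $C'$. Since $\pi\circ\phi$ is nonconstant on $C'$ and $C'$ has arithmetic genus $0$, the restriction $\pi\circ\phi|_{C'}$ is a finite map onto its image, a rational curve in $X_{a,b}[({\bf P}_a,{\bf P}_b)]$. By Lemma~\ref{jj23}, $C'\cdot D_1^{\text{strict}}=C'\cdot D_2^{\text{strict}}=0$, so the image of $C'$ meets the toric boundary of $X_{a,b}[({\bf P}_a,{\bf P}_b)]$ only along $D_\out$ (the boundary of $X^o_{a,b}[({\bf P}_a,{\bf P}_b)]$ consists, up to the exceptional loci which $C'$ avoids by the open-target condition, of $D_1^{\text{strict}}$, $D_2^{\text{strict}}$, $D_\out$ and the exceptional divisors). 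Because a nonconstant rational curve in a complete variety cannot avoid an ample-enough divisor — more precisely, because $\pi\circ\phi(C')$ is a complete curve that is \emph{not} contained in the open surface unless it is a point — the image must meet $D_\out$, and in fact all of its intersection with the full toric boundary is concentrated on $D_\out$.

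Then the key point: I claim this intersection is a single \emph{point} of $C'$, not merely supported on $D_\out$. This is where the relative condition does the work. The total curve $C$ meets $D_\out^o$ only at $p$ with contact order $k$; hence on the normalization, only one branch of $C$ at $p$ lies over $D_\out^o$, and every other point of $C$ maps into $X^o_{a,b}[({\bf P}_a,{\bf P}_b)]$, i.e.\ off $D_\out$. A component $C'$ on which $\pi\circ\phi$ is nonconstant must, by the previous paragraph, hit $D_\out$; but the only point of the entire curve $C$ mapping to $D_\out^o$ inside the open target is $p$, and any additional intersection of $C'$ with $D_\out$ in the complete surface would force either a contracted component into $D_\out$ (excluded) or a point of $C'$ mapping into $D_\out\setminus D_\out^o$, i.e.\ into one of the excised toric fixed points — impossible since $\pi\circ\phi(C')\subset X^o_{a,b}[({\bf P}_a,{\bf P}_b)]$ by property~(ii) of $[\phi]$. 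Therefore $C'\cap(\pi\circ\phi)^{-1}(D_\out^o)=\{p\}$ consists of a single point.

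The main obstacle I anticipate is the careful bookkeeping of the destabilization: one must be sure that ``over $D_\out^o$'' is interpreted consistently between the expanded target $\foX^o_{a,b}$ and the complete surface $X_{a,b}[({\bf P}_a,{\bf P}_b)]$, and that components of $C$ mapped by $\pi$ into the bubble levels over $D_\out^o$ are genuinely contracted by $\pi\circ\phi$ and so irrelevant to the statement about nonconstant $C'$. The cleanest route is probably to phrase everything on the expanded degeneration, use that the predeformable (relative) condition forbids any component of $C$ from mapping into the singular locus of $\foX^o_{a,b}$ or dominating $D_\out^o$, conclude that $C'$ meets the (strict transform of the) relative divisor transversally along its interior at isolated points each of which must be $p$ by connectedness and the single-marked-point convention, and finally invoke that $C'$ has genus $0$ so it cannot meet $D_\out^o$ at two distinct points without the contact orders summing to more than the allowed total $k$ on all of $C$ — but since we only need a set-theoretic statement, the genus-$0$ degree count is a safety net rather than the heart of the argument.
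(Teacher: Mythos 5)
There is a genuine gap, and it sits exactly where the paper has to do real work. In your first and third paragraphs you assert that, set-theoretically, $(\pi\circ\phi)^{-1}(D_\out^o)$ consists of the point $p$ together with whole components contracted by $\pi\circ\phi$ into $D_\out^o$, and that the latter are ``excluded by hypothesis on $C'$.'' The hypothesis only says that $C'$ itself is not such a component; it does not prevent \emph{other} components of $C$ --- typically components mapped by $\phi$ into the bubbles of the destabilization $\foX^o_{a,b}[({\bf P}_a,{\bf P}_b)]$, hence mapped by $\pi\circ\phi$ to points of $D_\out^o$ --- from being attached to $C'$ at nodes. Every such node is a point of $C'\cap(\pi\circ\phi)^{-1}(D_\out^o)$, in general distinct from $p$, and nothing in your argument bounds how many of them $C'$ can carry: the full-contact condition constrains $\phi^{-1}$ of the \emph{extremal} divisor in the expanded target, not $(\pi\circ\phi)^{-1}(D_\out^o)$ in $X^o_{a,b}[({\bf P}_a,{\bf P}_b)]$. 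Your fallback remark that two such points would force ``contact orders summing to more than $k$'' is also not correct: two points of small local order are perfectly compatible with total intersection number $k$, so no degree count rules them out. (Your second paragraph's nonemptiness argument via an ``ample-enough divisor'' is also loose, but that is a minor, fixable point compared with the above.)

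The missing mechanism is the one the paper uses. Since no component of $C$ dominates $D_\out$ and all contact with the extremal divisor is concentrated at the single point $p$, any connected piece of $(\pi\circ\phi)^{-1}(D_\out^o)$ made of bubble components (each mapping into a fiber of a $\PP^1$-bundle over $D_\out^o$, hence meeting the next attaching divisor) is forced to propagate level by level up to the extremal divisor and therefore to contain $p$; in particular the image of any nonconstant component meets $D_\out^o$ only at $q=\pi\circ\phi(p)$, and every point of $C$ lying over $D_\out^o$ is joined to $p$ inside the preimage. If now $C'$ met $(\pi\circ\phi)^{-1}(D_\out^o)$ in two distinct points, the path between them inside the preimage together with the arc of $C'$ would create a loop in the dual graph of $C$, contradicting arithmetic genus $0$. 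This genus-$0$/no-loops step, which you dismiss as a ``safety net,'' is the heart of the proof, and your proposal has no substitute for it.
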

\begin{proof}
Let $q=\pi\circ \phi(p)\in D_\out^o$. 
Since no components of $C$ dominate $D_\out$ and $\phi(C)$ has full contact
with the extremal $D_\out \subset \foX^o_{a,b}[({\bf P}_a,{\bf P}_b)]$
at a single point, we conclude
$\pi \circ \phi(C')$ meets $D_\out^o$ only at $q$.
Since the dual graph of $C$ has no loops (by the genus 0 condition),
the set $C' \cap (\pi \circ \phi)^{-1}(D_\out^o)$
can not contain more than one point.
\end{proof}

\begin{lemma}\label{hphp}
If $f_{a,b} \neq 1$, then there exists a nonconstant map
$$\mathbb{P}^1 \rightarrow X^o_{a,b}[({\bf P}'_a,{\bf P}'_b)]$$
which is both
\begin{enumerate}
\item[(i)] a normalization of a subcurve of $X^o_{a,b}[({\bf P}'_a,{\bf P}'_b)]$,
\item[(ii)]
 an element of $\overline{\foM}(X^o_{a,b}[({\bf P}'_a,{\bf P}'_b)]/D_\out^o)$
where
${\bf P}'_a$ is of size $\text{\em ak}'$ and length $\ell_1$
  and ${\bf P}'_b$
of size $\text{\em bk}'$ and length $\ell_2$.
\end{enumerate}
\end{lemma}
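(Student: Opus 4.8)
The plan is to unwind a single nonconstant component of a stable relative map and recognize it, after normalization, as a map of the required type; the bookkeeping is controlled by Lemmas \ref{jj23} and \ref{jj24} together with the fact that $X_{a,b}$ has Picard rank one.

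First I would fix the starting data. Since $f_{a,b}\neq 1$, Theorem 2 produces ordered partitions ${\bf P}=({\bf P}_a,{\bf P}_b)$ of lengths $\ell_1,\ell_2$ and sizes $ak,bk$ with $N_{a,b}[{\bf P}]\neq 0$, so $\overline{\foM}(X^o_{a,b}[{\bf P}]/D_\out^o)\neq\emptyset$. Choose $[\phi]$ in it, with domain $(C,p)$, and view the underlying curve $\pi\circ\phi\colon C\to X_{a,b}[{\bf P}]$ inside the complete surface as in the discussion before Lemma \ref{jj23}. Because $\beta_k[{\bf P}]\cdot D_\out=k>0$, the map $\pi\circ\phi$ is nonconstant on at least one irreducible component; pick any such $C_0$. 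Every irreducible component of the genus-$0$ nodal curve $C$ is a smooth $\PP^1$, so putting $C_0'':=\pi\circ\phi(C_0)\subset X^o_{a,b}[{\bf P}]$ I may take the normalization $\psi\colon \PP^1=\widetilde{C_0''}\to C_0''\hookrightarrow X^o_{a,b}[{\bf P}]$; it represents the class $[C_0'']$ and, by construction, is the normalization of a subcurve. This is conclusion (i). It remains to identify $[C_0'']$ and the contact order with $D_\out$.

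Apply Lemma \ref{jj23} to $C_0$: it gives $[C_0'']\cdot D_1^{\mathrm{strict}}=[C_0'']\cdot D_2^{\mathrm{strict}}=0$, the products being nonnegative since the complete image $C_0''$ lies in $X^o_{a,b}[{\bf P}]$ and hence cannot equal $D_i^{\mathrm{strict}}$ (which meets the removed toric fixed points). Set $k':=[C_0'']\cdot D_\out$, and let $r_i:=[C_0'']\cdot E_i\geq 0$ and $s_j:=[C_0'']\cdot E_j'\geq 0$ be the multiplicities of $C_0''$ along the exceptional curves of $\nu$; assemble these into ordered partitions ${\bf P}_a'=r_1+\dots+r_{\ell_1}$ and ${\bf P}_b'=s_1+\dots+s_{\ell_2}$ of lengths $\ell_1,\ell_2$ (zero parts allowed). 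The crucial point is $k'\geq 1$: if $k'=0$ then, as $\Pic(X_{a,b})\otimes\QQ$ is one-dimensional with $D_\out$ ample, $\nu_*[C_0'']\cdot D_\out=0$ forces $\nu_*[C_0'']=0$, so $C_0''$ is $\nu$-exceptional and equals some $E_i$ or $E_j'$; but then $[C_0'']\cdot D_1^{\mathrm{strict}}=1$ or $[C_0'']\cdot D_2^{\mathrm{strict}}=1$, contradicting Lemma \ref{jj23}. The same one-dimensionality of $\Pic(X_{a,b})\otimes\QQ$ now forces $\nu_*[C_0'']=k'\beta_1=\beta_{k'}$, since both sides have the same nonzero intersection with $D_\out$. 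From $\nu^*D_1=D_1^{\mathrm{strict}}+\sum_i E_i$ and $\nu^*D_2=D_2^{\mathrm{strict}}+\sum_j E_j'$ I then read off $|{\bf P}_a'|=\nu_*[C_0'']\cdot D_1=ak'$ and $|{\bf P}_b'|=bk'$, so $[C_0'']=\nu^*\beta_{k'}-\sum_i r_i[E_i]-\sum_j s_j[E_j']=\beta_{k'}[{\bf P}']$ with ${\bf P}'=({\bf P}_a',{\bf P}_b')$.

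Finally I would check that $\psi$ lies in $\overline{\foM}(X^o_{a,b}[{\bf P}']/D_\out^o)$, which is the same blown-up surface with only the curve class changed: the domain $\PP^1$ has genus $0$; $\psi$ represents $[C_0'']=\beta_{k'}[{\bf P}']$; by Lemma \ref{jj24} the set $C_0\cap(\pi\circ\phi)^{-1}(D_\out^o)$ is a single point, so $C_0''$ meets $D_\out^o$ at one point and $\psi^*D_\out=k'[p']$, i.e. $\psi$ has full contact order $k'\geq 1$ at the point $p'$; and stability is immediate for an irreducible nonconstant map with one contact point. This gives conclusion (ii). I expect the genuinely delicate issues to be only formal: passing cleanly from the stable relative map (with its destabilizing bubbles, all of which $\pi$ contracts to $D_\out^o$) to the honest curve in the complete surface so that Lemmas \ref{jj23}–\ref{jj24} apply verbatim to $C_0$, and tracking that $k'$ and ${\bf P}'$ need not equal $k$ and ${\bf P}$ — which is precisely why the statement allows primed data. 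The one substantive, non-bookkeeping ingredient is the rank-one Picard group of the weighted projective plane $X_{a,b}$, which is what pins $\nu_*[C_0'']$ to be a multiple of $\beta_1$ and thereby makes the sizes of ${\bf P}_a'$ and ${\bf P}_b'$ come out as $ak'$ and $bk'$.
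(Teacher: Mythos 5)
Your proposal is correct and follows essentially the same route as the paper: take an irreducible component on which $\pi\circ\phi$ is nonconstant, apply Lemmas \ref{jj23} and \ref{jj24}, and pass to the normalization of its image, with the primed partition data of size $ak'$, $bk'$ arising from the rank-one Picard group of $X_{a,b}$. Your write-up merely fills in details the paper leaves implicit (the identification $\nu_*[C_0'']=\beta_{k'}$, the check $k'\geq 1$, and the partition sizes), whereas the paper phrases the last step as a dichotomy between the restricted map being birational onto its image or a multiple cover; the content is the same.
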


\begin{proof}
Let 
$\mathbb{P}^1\stackrel{\sim}{=}C'\subset C$ be an irreducible component on
which $\pi \circ \phi$ is nonconstant. By Lemmas \ref{jj23} and \ref{jj24},
the map
\begin{equation}\label{u23}
\pi \circ\phi: C' \rightarrow X^o_{a,b}[({\bf P}_a,{\bf P}_b)]
\end{equation}
lies in the moduli space{\footnote{
Since lengths of the partitions match, the spaces 
$X^o_{a,b}[({\bf P}_a,{\bf P}_b)]$ and $X^o_{a,b}[({\bf P}'_a,{\bf P}'_b)]$
 can be taken to be the same.}}
$\overline{\foM}(X^o_{a,b}[({\bf P}'_a,{\bf P}'_b)]/D_\out^o)$
where  ${\bf P}'_a$ is of size $ak'$ and length $\ell_1$
  and ${\bf P}'_b$
of size $bk'$ and length $\ell_2$ for $k'\leq k$.

If \eqref{u23} is birational onto the image
$\pi\circ\phi(C')$, then we have proven the Lemma.
If
$$ \pi \circ\phi: C' \rightarrow 
\pi \circ\phi(C')$$
is a multiple cover, then, by taking the normalization of
 $\pi \circ\phi(C')$, we obtain
the required map
(for $k''<k'$).
\end{proof}

\subsection{Genus inequalities}\label{geni}
On the surface $X_{a,b}$, the intersection results
$$D_1\cdot D_2 =1, \ \ 
D_1 \cdot D_\out = \frac{1}{b}, \ \ \ D_2 \cdot D_\out = \frac{1}{a} $$
are easily obtained   since the divisors intersect transversely (at orbifold points).
Since $A_1(X_{a,b})$ is rank 1 over $\mathbb{Q}$, we conclude
$$ bD_1 = aD_2 = ab D_\out,$$
$$ D_1^2 = \frac{a}{b}, \ \ D_2^2 = \frac{b}{a}, \ \ D_\out^2 = \frac{1}{ab} \ .$$
Since $\beta_k \cdot D_\out = k$, we see $\beta_{k} = abk D_\out$.

The arithmetic genus of a complete
curve $P \subset X^o_{a,b}[({\bf P}_a,{\bf P}_b)]$
of class
$$\beta_k [({\bf P}_a,{\bf P}_b)] = \nu^*(\beta_k) - \sum_{i=1}^{\ell_1} p_i E_i
- \sum_{j=1}^{\ell_2} p_j' E'_j$$
is given by adjunction,
\begin{eqnarray*}
2g_a(P)-2 & = & ( K_{X^o_{a,b}[({\bf P}_a,{\bf P}_b)]}  + P) \cdot P \\
& = & (-D_1-D_2-D_\out + \beta_k) \cdot \beta_k -\sum_{i=1}^{\ell_1} p_i(p_i-1) 
-\sum_{j=1}^{\ell_2} p_j'(p_j'-1) \\
& = &
-ak-bk
-k
+abk^2
-\sum_{i=1}^{\ell_1} p_i(p_i-1) 
-\sum_{j=1}^{\ell_2} p_j'(p_j'-1)\\
& = & ab k^2 -k  -\sum_{i=1}^{\ell_1} p_i^2 -\sum_{j=1}^{\ell_2} (p_j')^2
 \ .
\end{eqnarray*}
If $P$ is irreducible with normalization of genus 0, then
$$ab k^2 -k  -\sum_{i=1}^{\ell_1} p_i^2 -\sum_{j=1}^{\ell_2} (p_j')^2 +2 \geq 0$$
since the arithmetic genus is bounded from below by the
geometric genus.

Suppose $f_{a,b}\neq 1$. By the existence result of Lemma \ref{hphp}, there
exists an irreducible curve $P\subset X^o_{a,b}[({\bf P}_a,{\bf P}_b)]$
with normalization of genus 0. Hence, there exists an integer $k>0$ and
partitions 
\begin{equation}\label{kww3}
\mathbf{P}_a = p_1+\ldots+p_{\ell_1}, \ \  |\mathbf{P}_a|= ak, \ \ \ \
\mathbf{P}_b = p'_1+\ldots+p'_{\ell_2}, \ \ |\mathbf{P}_b|= bk
\end{equation}
for which
the 
inequality
\begin{equation}\label{kww4}
ab k^2 -k  -\sum_{i=1}^{\ell_1} p_i^2 -\sum_{j=1}^{\ell_2} (p_j')^2 +2 \geq 0
\end{equation}
is satisfied.

We define a primitive vector $(a,b)\in \mathbb{Z}^2$
lying strictly in the first quadrant to be {\em permissible} for
the pair $(\ell_1,\ell_2)$ if there 
exist  partitions \eqref{kww3} with $k>0$ satisfying the inequality
\eqref{kww4}. We have proven the following result.

\begin{proposition}
If $f_{a,b}\neq 1$ in the order product factorization of 
 $T_{\ell_2}^{-1} \circ S_{\ell_1} \circ T_{\ell_2}  
\circ S_{\ell_1}^{-1}$, then  $(a,b)$ is permissible for the
pair $(\ell_1,\ell_2)$.
\end{proposition}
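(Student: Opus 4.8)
The plan is to chain together the results already assembled in this section. Suppose $f_{a,b}\neq 1$. By Theorem 2, the coefficient $c^k_{a,b}(\ell_1,\ell_2)$ is nonzero for some $k\geq 1$, so there must be a nonvanishing invariant $N_{a,b}[({\bf P}_a,{\bf P}_b)]\neq 0$ for some ordered partitions ${\bf P}_a$ of size $ak$ and length $\ell_1$ and ${\bf P}_b$ of size $bk$ and length $\ell_2$. A nonzero Gromov-Witten invariant forces the moduli space $\overline{\foM}(X^o_{a,b}[({\bf P}_a,{\bf P}_b)]/D_\out^o)$ to be nonempty, and then Lemma \ref{hphp} produces a nonconstant map $\mathbb{P}^1\to X^o_{a,b}[({\bf P}'_a,{\bf P}'_b)]$ which is the normalization of a subcurve $P$, for some possibly smaller $k'>0$ with partitions ${\bf P}'_a$, ${\bf P}'_b$ of sizes $ak'$, $bk'$ and lengths $\ell_1$, $\ell_2$. (The length is unchanged along the way, so the same blown-up surface can be used throughout.)

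The next step is to apply the adjunction computation of Section \ref{geni} to this curve $P$. Since $P$ is irreducible with normalization of genus $0$, its arithmetic genus $g_a(P)$ is at least $0$, and the displayed formula
\[
2g_a(P)-2 = ab(k')^2 - k' - \sum_{i=1}^{\ell_1} (p_i')^2 - \sum_{j=1}^{\ell_2} (p_j'')^2
\]
immediately yields the permissibility inequality
\[
ab(k')^2 - k' - \sum_{i=1}^{\ell_1} (p_i')^2 - \sum_{j=1}^{\ell_2} (p_j'')^2 + 2 \geq 0.
\]
This is exactly inequality \eqref{kww4} with $k$ replaced by $k'$ and the partitions replaced by the primed ones, so $(a,b)$ is permissible for $(\ell_1,\ell_2)$ by definition. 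That completes the argument.

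Most of the work here has already been done in the preceding lemmas, so there is little genuinely new to prove; the proposition is essentially a bookkeeping statement collecting the chain Theorem 2 $\Rightarrow$ nonempty moduli $\Rightarrow$ Lemma \ref{hphp} $\Rightarrow$ adjunction $\Rightarrow$ permissibility. The one place that requires a touch of care is matching up the decorations: one should note that the curve $P$ supplied by Lemma \ref{hphp} represents the class $\beta_{k'}[({\bf P}'_a,{\bf P}'_b)]$ on the blown-up surface (this is part of the content of membership in $\overline{\foM}(X^o_{a,b}[({\bf P}'_a,{\bf P}'_b)]/D_\out^o)$), so the adjunction formula of Section \ref{geni} applies verbatim to it. In other words, the main (and only) obstacle is simply to make sure the partition data produced by Lemma \ref{hphp} is of the right size and length to feed into the definition of permissibility — and since Lemma \ref{hphp} was stated precisely so that the lengths are $\ell_1$ and $\ell_2$ and the sizes are $ak'$ and $bk'$, this is immediate.
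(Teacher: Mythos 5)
Your proposal is correct and follows essentially the same route as the paper: Theorem 2 gives a nonvanishing $N_{a,b}[({\bf P}_a,{\bf P}_b)]$, hence a nonempty moduli space, Lemma \ref{hphp} supplies an irreducible curve with genus $0$ normalization representing a class $\beta_{k'}[({\bf P}'_a,{\bf P}'_b)]$, and the adjunction computation of Section \ref{geni} together with the bound of the arithmetic genus from below by the geometric genus yields inequality \eqref{kww4}, i.e.\ permissibility. Your remark about matching the sizes and lengths of the partitions produced by Lemma \ref{hphp} is exactly the bookkeeping the paper builds into the statement of that lemma, so nothing further is needed.
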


\subsection{Case I: Continuous range} \label{bnm1}
Our first result specifies a continuous range of possible slopes
of permissible vectors.
Consider the quadratic polynomial
  $$R_{\ell_1,\ell_2}(z) = \frac{1}{\ell_2} z^2-z + \frac{1}{\ell_1}\ .$$
with discriminant  $1-\frac{4}{\ell_1\ell_2}$.
For the list of pairs
$$(\ell_1,\ell_2) = (1,1),\ (1,2),\ (2,1), \ (1,3),\ (3,1),$$
$R_{\ell_1,\ell_2}(z)>0$ for all real $z$.
For all other pairs of positive integers $(\ell_1,\ell_2)$, the polynomial
$R_{\ell_1,\ell_2}$ has two positive real roots
$$\xi_\pm = \frac{\ell_2}{2} \left(1 \pm \sqrt{1-\frac{4}{\ell_1\ell_2}}\ \right)\ .$$
For slopes
$\xi_-  <   \frac{b}{a} <  \xi_+ $ strictly between the roots, 
$R_{\ell_1,\ell_2}(\frac{b}{a})$ is negative.

\begin{lemma} \label{b334} 
If $R_{\ell_1,\ell_2}(\frac{b}{a})<0$,
then
the vector $(a,b)$ is permissible for $(\ell_1,\ell_2)$.
\end{lemma}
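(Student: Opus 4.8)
The plan is to construct, for a suitable integer $k>0$, explicit ordered partitions $\mathbf{P}_a$ of size $ak$ and length $\ell_1$ and $\mathbf{P}_b$ of size $bk$ and length $\ell_2$ that are as ``spread out'' as possible, so that $\sum p_i^2$ and $\sum (p_j')^2$ are close to their minimal values, and then verify the inequality \eqref{kww4}. The minimum of $\sum_{i=1}^{\ell_1} p_i^2$ subject to $\sum p_i = ak$ with nonnegative integer parts is attained by the balanced partition, in which each part is either $\lfloor ak/\ell_1\rfloor$ or $\lceil ak/\ell_1\rceil$; the minimum value is $\lceil (ak)^2/\ell_1\rceil$, and in any case it is at most $(ak)^2/\ell_1 + ak$ (since each part is at most $\lceil ak/\ell_1\rceil \le ak$, crudely, and more sharply the excess over $(ak)^2/\ell_1$ is bounded by a term linear in $ak$). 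So first I would record the elementary bound
$$\sum_{i=1}^{\ell_1} p_i^2 \le \frac{(ak)^2}{\ell_1} + ak, \qquad \sum_{j=1}^{\ell_2}(p_j')^2 \le \frac{(bk)^2}{\ell_2} + bk$$
for the balanced partitions (I would state the precise $\lceil\cdot\rceil$ form but use this weaker linear-error version in estimates).

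Substituting these bounds into the left side of \eqref{kww4} gives
$$ab k^2 - k - \sum p_i^2 - \sum (p_j')^2 + 2 \ \ge\ abk^2 - k - \frac{a^2k^2}{\ell_1} - ak - \frac{b^2k^2}{\ell_2} - bk + 2.$$
The coefficient of $k^2$ here is exactly
$$ab - \frac{a^2}{\ell_1} - \frac{b^2}{\ell_2} \ =\ -a^2\left(\frac{1}{\ell_2}\cdot\frac{b^2}{a^2} - \frac{b}{a} + \frac{1}{\ell_1}\right) \ =\ -a^2\, R_{\ell_1,\ell_2}\!\left(\tfrac{b}{a}\right),$$
which is \emph{strictly positive} precisely under the hypothesis $R_{\ell_1,\ell_2}(b/a)<0$. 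Thus the right-hand side is a quadratic in $k$ with positive leading coefficient, hence $\to +\infty$, so it is nonnegative for all sufficiently large $k$; choosing any such $k$ and the balanced partitions for that $k$ exhibits $(a,b)$ as permissible.

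The main technical point — and the only place requiring care — is the first paragraph: establishing the sharp (or sharp-enough) upper bound on $\sum p_i^2$ for the balanced partition and confirming the error term is genuinely $O(ak)$ rather than $O((ak)^2)$, since a quadratic error term would swamp the $abk^2$ gain and the argument would collapse. This is a routine but essential lemma about partitions: with $ak = q\ell_1 + r$, $0\le r<\ell_1$, the balanced partition has $r$ parts equal to $q+1$ and $\ell_1-r$ parts equal to $q$, giving $\sum p_i^2 = \ell_1 q^2 + 2qr + r = (ak)^2/\ell_1 - r^2/\ell_1 + r \le (ak)^2/\ell_1 + \ell_1/4$, which is in fact an error bounded by a \emph{constant} depending only on $\ell_1$ — even better than the linear bound stated above, and certainly enough to conclude. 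I would present it in this constant-error form to keep the final $k$-estimate clean. Everything after that is a one-line verification that a quadratic polynomial in $k$ with positive leading coefficient is eventually nonnegative.
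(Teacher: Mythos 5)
Your argument is correct and is essentially the paper's own proof: choose (nearly) balanced partitions so that the left side of \eqref{kww4} becomes a quadratic in $k$ whose leading coefficient is $-a^2R_{\ell_1,\ell_2}\left(\frac{b}{a}\right)>0$, and take $k$ large. The paper sidesteps your rounding analysis by simply taking $k$ divisible by both $\ell_1$ and $\ell_2$, so the balanced partitions have exactly equal parts and no error term arises; your constant-error bound $\sum p_i^2\le (ak)^2/\ell_1+\ell_1/4$ is correct and suffices, though the parenthetical claim that the minimum of $\sum p_i^2$ equals $\lceil (ak)^2/\ell_1\rceil$ is false in general (e.g.\ $\ell_1=9$, $ak=5$ gives minimum $5$, not $3$) — fortunately you never use it.
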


\begin{proof}
If $k$ is chosen to be divisible by both $\ell_1$ and $\ell_2$,
the balanced partitions
$$\mathbf{P}_a = {\frac{ak}{\ell_1}+\ldots+\frac{ak}{\ell_1}},\ \ \ 
\mathbf{P}_b = {\frac{bk}{\ell_2}+\ldots+\frac{bk}{\ell_2}}
$$
can be formed.
The inequality \eqref{kww4} becomes
\begin{equation}\label{kww5}
\left(
ab-\frac{a^2}{\ell_1}-\frac{b^2}{\ell_2}\right) 
k^2 -k  +2 \geq 0
\end{equation}
Since the coefficient of $k^2$ is
$-a^2 R_{\ell_1,\ell_2} (\frac{b}{a})>0$ by the assumed slope
condition, the inequality \eqref{kww5} can certainly
be satisfied for large enough (and divisible) $k$.
\end{proof}

If $(\ell_1,\ell_2)\in \{ (1,4),\ (4,1), \ (2,2) \}$, then the 
polynomial $R_{\ell_1,\ell_2}$ has a double root $\xi_-=\xi_+$.
Lemma \ref{b334} does not permit any slopes in
the double root case. 

\begin{lemma}
If
$(\ell_1,\ell_2) \notin\{ (1,1),\ (1,2),\ (2,1), \ (1,3),\ (3,1), \ (1,4), \ (4,1), 
(2,2)\},$
then the two roots $\xi_\pm$ are real, positive, and irrational.
\end{lemma}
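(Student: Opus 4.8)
The plan is to set $n=\ell_1\ell_2$ and observe that both the statement and the list of eight excluded pairs depend only on $n$: the excluded pairs are precisely those with $n\le 4$, since $n\in\{1,2,3\}$ gives $(1,1),(1,2),(2,1),(1,3),(3,1)$ and $n=4$ gives $(1,4),(4,1),(2,2)$. So for every remaining pair $n\ge5$, and hence the discriminant $D=1-\tfrac4n=\tfrac{n-4}{n}$ satisfies $0<D<1$. Reality, distinctness, and positivity of $\xi_\pm=\tfrac{\ell_2}{2}\bigl(1\pm\sqrt D\,\bigr)$ are then immediate (and the positivity was already recorded in the discussion preceding the lemma): $D>0$ yields two distinct real roots, and $0<\sqrt D<1$ yields $0<\xi_-<\xi_+$.

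The only real content is irrationality, and this is where I would spend the effort. Since $\tfrac{\ell_2}{2}\in\QQ\setminus\{0\}$, each of $\xi_\pm$ is rational if and only if $\sqrt D$ is rational, and writing $\sqrt D=\sqrt{n(n-4)}/n$ shows this holds if and only if the nonnegative integer $n(n-4)$ is a perfect square. I would then use the identity $n(n-4)=(n-2)^2-4$: if $(n-2)^2-4=k^2$ for some $k\in\ZZ_{\ge0}$, then $(n-2-k)(n-2+k)=4$ with both factors positive integers (using $n-2\ge3$) and $n-2-k\le n-2+k$, so the pair $\{n-2-k,\,n-2+k\}$ is $\{1,4\}$ or $\{2,2\}$. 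The first forces $n-2=\tfrac52\notin\ZZ$, the second forces $n=4$, and both contradict $n\ge5$. Hence $n(n-4)$ is not a perfect square, so $\sqrt D$ is irrational, and therefore so are $\xi_\pm$.

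I do not anticipate a genuine obstacle: the argument is a couple of lines of elementary number theory. The one point that needs a moment's care is the reduction from ``$D$ is the square of a rational'' to ``$n(n-4)$ is the square of an integer'', which amounts to clearing the square denominator in $D=n(n-4)/n^2$; after that, the factorization $n(n-4)=(n-2)^2-4$ does all the work, isolating $n=4$ as the unique value of $n$ for which $n(n-4)$ is a perfect square.
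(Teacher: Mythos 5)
Your proof is correct. It is worth noting, though, that your route differs from the paper's in how the irrationality is extracted. The paper writes $\ell_1\ell_2=2^s n$ with $n$ odd and splits into the cases $s=0$, $s=1$, $s\geq 2$ in order to put the discriminant $\frac{\ell_1\ell_2-4}{\ell_1\ell_2}$ into lowest terms; since numerator and denominator of the reduced fraction are coprime, both would have to be perfect squares, and in the three cases they differ by $4$, $2$, and $1$ respectively, which is impossible for positive squares. You instead clear denominators, reducing the question to whether $\ell_1\ell_2(\ell_1\ell_2-4)$ is a perfect square, and settle that uniformly with the single difference-of-squares identity $(n-2)^2-k^2=4$ (together with the small detail, which you rightly flag, that a rational square root of an integer is an integer). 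The two arguments are of the same elementary flavor --- both ultimately exploit that distinct positive squares cannot be too close --- but yours avoids the $2$-adic case analysis entirely, while the paper's reduction makes explicit in each case exactly which gap between squares is being ruled out. Your opening observation that the excluded list is precisely the set of pairs with $\ell_1\ell_2\leq 4$ matches the paper's closing remark that the hypotheses are only used to guarantee $\ell_1\ell_2-4>0$.
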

\begin{proof}
Only the irrational claim is nontrivial.
Let $2^s$ be the largest power of $2$ dividing the 
product $\ell_1\ell_2$,
$$\ell_1\ell_2= 2^s n$$
where $n$ is odd.
There are three cases to consider:
\begin{enumerate}
\item[(i)] If $s=0$, 
$$\frac{\ell_1\ell_2-4}{\ell_1\ell_2} = \frac{n-4}{n}$$
where $n-4$ and $n$ are relatively prime.
But there are no positive
pairs of squares separated by $4$, so $\sqrt{1-\frac{4}{\ell_1\ell_2}}$
is irrational.
\item[(ii)] If $s=1$, 
$$\frac{\ell_1\ell_2-4}{\ell_1\ell_2} = \frac{n-2}{n}$$
and the same argument applies.
\item[(iii)] If $s\geq 2$, 
$$\frac{\ell_1\ell_2-4}{\ell_1\ell_2} = \frac{2^{s-2}n-1}{2^{s-2}n}$$
and the argument again applies.
\end{enumerate}
The hypotheses in the Lemma are only used to show
$\ell_1\ell_2-4>0$.
\end{proof}

\begin{lemma} 
If $R_{\ell_1,\ell_2}(\frac{b}{a}) =0$, then we must have
$(\ell_1,\ell_2) \in\{ (1,4),\ (4,1),\ (2,2)\}$. 
Moreover,
 $(a,b)$ is permissible for $(\ell_1,\ell_2)$. \label{b33}
\end{lemma}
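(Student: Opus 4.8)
The plan is to first pin down $(\ell_1,\ell_2)$ by playing the two preceding lemmas against each other, and then to establish permissibility in the three surviving cases by a degenerate version of the balanced-partition construction from Lemma~\ref{b334}.

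First I would note that $R_{\ell_1,\ell_2}(\frac{b}{a})=0$ exhibits a \emph{rational} root of $R_{\ell_1,\ell_2}$, so in particular $R_{\ell_1,\ell_2}$ has a real root. This rules out the five pairs $(1,1),(1,2),(2,1),(1,3),(3,1)$, where $R_{\ell_1,\ell_2}>0$ on all of $\RR$, and it rules out every pair outside the eight-element list $\{(1,1),(1,2),(2,1),(1,3),(3,1),(1,4),(4,1),(2,2)\}$, since the previous lemma shows the roots are irrational there. Hence $(\ell_1,\ell_2)\in\{(1,4),(4,1),(2,2)\}$. For each of these $\ell_1\ell_2=4$, so the discriminant $1-\frac{4}{\ell_1\ell_2}$ vanishes, $R_{\ell_1,\ell_2}$ has the double root $\xi_\pm=\frac{\ell_2}{2}$, and thus $\frac{b}{a}=\frac{\ell_2}{2}$; combined with primitivity of $(a,b)$ this forces $(a,b)=(1,2),(2,1),(1,1)$ respectively, and in each case the identities $\ell_1 b=2a$ and $\ell_2 a=2b$ hold.

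For the permissibility statement I would take $k=2$ together with the balanced partitions
$$\mathbf{P}_a=\underbrace{b+\cdots+b}_{\ell_1},\qquad\mathbf{P}_b=\underbrace{a+\cdots+a}_{\ell_2},$$
which have lengths $\ell_1,\ell_2$ and sizes $\ell_1 b=2a=ak$ and $\ell_2 a=2b=bk$, with positive integer parts. Substituting into the left side of \eqref{kww4} and using $\ell_1 b^2=2ab=\ell_2 a^2$,
$$abk^2-k-\sum_{i=1}^{\ell_1}p_i^2-\sum_{j=1}^{\ell_2}(p_j')^2+2=4ab-\ell_1 b^2-\ell_2 a^2=0\ \geq\ 0,$$
so $(a,b)$ is permissible for $(\ell_1,\ell_2)$.

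The real content is carried by the two previous lemmas; the one point to watch — the main obstacle, such as it is — is that at the boundary $R_{\ell_1,\ell_2}(\frac{b}{a})=0$ one cannot simply quote Lemma~\ref{b334}, whose proof takes $k$ divisible by both $\ell_1$ and $\ell_2$: the vanishing of $R_{\ell_1,\ell_2}(\frac{b}{a})$ makes the $k^2$-coefficient in \eqref{kww5} equal to $0$ and hence forces $k\le 2$, leaving no admissible $k$ when, e.g., $\ell_1=4$. Instead one exploits the relations $\ell_1 b=2a$, $\ell_2 a=2b$ to see that $k=2$ already yields integral balanced partitions realizing equality in \eqref{kww4}; equivalently, one just checks the three explicit cases $(1,4),(4,1),(2,2)$ by hand.
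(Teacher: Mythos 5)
Your proposal is correct and follows essentially the same route as the paper: the first claim is reduced to the preceding lemmas (rational roots of $R_{\ell_1,\ell_2}$ occur only for $(1,4),(4,1),(2,2)$), and permissibility is then established by taking $k=2$ with the balanced partitions, for which the inequality \eqref{kww4} holds with equality. Your extra remarks — that the double root forces $(a,b)=(1,2),(2,1),(1,1)$ and that Lemma~\ref{b334} cannot be invoked directly at the boundary — are accurate refinements of exactly the computation the paper performs.
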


\begin{proof}
Since $R_{\ell_1,\ell_2}$ has rational roots only in case
$(\ell_1,\ell_2) \in\{ (1,4),\ (4,1),\ (2,2)\}$,
the first  claim is clear. 
For $(\ell_1,\ell_2)=(1,4)$ and $(4,1)$, 
we have the double roots $(a,b)=(1,2)$ and $(2,1)$ respectively.
For $(\ell_1,\ell_2)=(2,2)$, we have the double root $(a,b)=(1,1)$.
Permissibility is established in both cases by taking $k=2$
and balanced partitions.
\end{proof}

\subsection{Case II: Discrete series} \label{bnm2}
\subsubsection{Positive values}
Permissibility for $R_{\ell_1,\ell_2}(\frac{b}{a}) \leq 0$ has been
established by Lemmas \ref{b334} and \ref{b33}. We now consider the cases
where 
\begin{equation}\label{55t}
R_{\ell_1,\ell_2}\left(\frac{b}{a}\right) > 0\ .
\end{equation}
Since $\sum_{i=1}^{\ell_1} p_i^2\geq \frac{a^2}{\ell_1} k^2$
and similarly for the $p'_j$, we see
\begin{equation*}
ab k^2 -k  -\sum_{i=1}^{\ell_1} p_i^2 -\sum_{j=1}^{\ell_2} (p_j')^2 +2 
\leq  -a^2 R_{\ell_1,\ell_2} \left(\frac{b}{a}\right) k^2 -k +2\ .
\end{equation*}
Certainly for all $k\geq 2$ the right side is negative.
Hence, if $(a,b)$ satisfies \eqref{55t} and is permissible for
$(\ell_1,\ell_2)$, then $k=1$ and we must
have
\begin{equation}
\label{jjb2}
ab   -\sum_{i=1}^{\ell_1} p_i^2 -\sum_{j=1}^{\ell_2} (p_j')^2 +1 =0
\end{equation}
for partitions $p_1+\ldots+p_{\ell_1}= a$ and 
$p'_1+ \ldots +p'_{\ell_2} = b$.

There are exactly three possibilities for the solution of \eqref{jjb2} in the presence of
condition \eqref{55t}:
\begin{enumerate}
\item[(i)] $a\equiv 0$ mod $\ell_1$, $b\equiv 0$ mod $\ell_2$, and 
$a^2R_{\ell_1,\ell_2}\left(\frac{b}{a}\right)=1$.
\item[(ii)] $a\equiv \pm 1$ mod $\ell_1$, $b \equiv 0$ mod $\ell_2$, and 
$a^2R_{\ell_1,\ell_2}\left(\frac{b}{a}\right)= \frac{1}{\ell_1}$,
\item[(iii)] $a\equiv 0$ mod $\ell_1$, $b \equiv \pm 1$ mod $\ell_2$, and
$a^2R_{\ell_1,\ell_2}\left(\frac{b}{a}\right)=\frac{1}{\ell_2}$
.
\end{enumerate}
A straightforward analysis shows unless one of (i-iii) are
satisfied, 
$$ab -\sum_{i=1}^{\ell_1} p_i^2 -\sum_{j=1}^{\ell_2} (p_j')^2 < -a^2R_{\ell_1,\ell_2}\left(\frac{b}{a}\right) 
-1 < -1\ .$$

\subsubsection{Analysis of (i)} 
If $\ell_1$ or $\ell_2$ equals 1, then (i) is special case
of (ii) and (iii). 
Let $\mathcal{S}_{\ell_1,\ell_2}$ be the set of solutions to
(i) with $(a,b)\in \mathbb{Z}^2$ lying in the closed first quadrant.
We will show $\mathcal{S}_{\ell_1,\ell_2}$ is empty when
$\ell_1,\ell_2>1$. 


We now assume $\ell_1,\ell_2>1$.
 When specialized to $b=0$,
the equation  of (i),
\begin{equation}\label{pww33}
a^2R_{\ell_1,\ell_2}\left(\frac{b}{a}\right)=1,
\end{equation}
yields
$\frac{a^2}{\ell_1}= 1$
which has {\em no} solutions satisfying $a\equiv 0 \mod \ell_1$.
A similar conclusion holds when $a=0$.
We conclude all elements of
 $\mathcal{S}_{\ell_1,\ell_2}$
lie strictly in the first
quadrant.

Crucial to our analysis are the following two
 transformations
$$\TT_1(a,b)= (\ell_1 b-a,b), \ \ \ \TT_2(a,b)=(a, \ell_2a-b)\ .$$
which leave the expression
$$a^2R_{\ell_1,\ell_2}\left(\frac{b}{a}\right)=
-ab+\frac{a^2}{\ell_1}+\frac{b^2}{\ell_2} $$
invariant. Both have order two,
$$\TT_1^2 = \TT_2^2 = \text{Id}\ .$$
If $(a,b) \in \mathcal{S}_{\ell_1,\ell_2}$
is a solution of (i) in the first quadrant, we have
seen $a,b>0$. Let
$$(a_1,b_1)= \TT_1(a,b), \ \ \ (a_2,b_2) = \TT_2(a,b)\ .$$
By the invariance, we have
$$a_i^2R_{\ell_1,\ell_2}\left(\frac{b_i}{a_i}\right)= 1$$
for $i=1,2$.
By the definitions of $\TT_i$, the congruence assumptions
for $a$ and $b$ hold also for $a_i$ and $b_i$
respectively.
Since $b_1=b>0$ and
$$ \frac{b^2}{\ell_2} >1,$$
we must have $a_1>0$. Hence, $(a_1,b_1) \in  \mathcal{S}_{\ell_1,\ell_2}$.
Similarly, $(a_2,b_2) \in  \mathcal{S}_{\ell_1,\ell_2}$.
We have proven the following result.

\begin{lemma} Both $\TT_1$ and $\TT_2$ preserve the set 
$\mathcal{S}_{\ell_1,\ell_2}$. \label{pser}
\end{lemma}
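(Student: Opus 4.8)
The plan is to show that each of the three conditions cutting out $\mathcal{S}_{\ell_1,\ell_2}$ -- the quadratic equation $a^2 R_{\ell_1,\ell_2}(b/a) = 1$, the congruences $\ell_1 \mid a$ and $\ell_2 \mid b$, and membership in the (closed) first quadrant -- is individually stable under $\TT_1$ and under $\TT_2$. Since both maps are visibly involutions, establishing this gives that $\TT_i$ restricts to a self-bijection of $\mathcal{S}_{\ell_1,\ell_2}$.

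First I would record the invariance of the quadratic form, which has essentially been asserted already: writing $Q(a,b) = a^2 R_{\ell_1,\ell_2}(b/a) = \tfrac{a^2}{\ell_1} - ab + \tfrac{b^2}{\ell_2}$, a one-line expansion shows $Q(\ell_1 b - a,\, b) = Q(a,b)$ and $Q(a,\, \ell_2 a - b) = Q(a,b)$, the cross terms cancelling once the denominator $\ell_1$ (resp. $\ell_2$) is cleared. Hence $Q(a,b)=1$ implies $Q(\TT_i(a,b))=1$. The congruence conditions are equally immediate: in $\TT_1(a,b) = (\ell_1 b - a,\, b)$ the first coordinate is $\equiv -a \equiv 0 \pmod{\ell_1}$ while the second is unchanged, and symmetrically for $\TT_2$.

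The only point requiring a short argument is that $\TT_i$ keeps the vector in the first quadrant. By the discussion preceding the lemma, any $(a,b) \in \mathcal{S}_{\ell_1,\ell_2}$ (with $\ell_1,\ell_2 > 1$) satisfies $a,b > 0$; moreover $\ell_1 \mid a$ and $\ell_2 \mid b$ force $\tfrac{a^2}{\ell_1} \ge \ell_1 > 1$ and $\tfrac{b^2}{\ell_2} \ge \ell_2 > 1$. For $(a_1,b_1) = \TT_1(a,b)$ we have $b_1 = b > 0$, and the invariance just proved gives $\tfrac{a_1^2}{\ell_1} - a_1 b + \tfrac{b^2}{\ell_2} = 1$, so $a_1\bigl(\tfrac{a_1}{\ell_1} - b\bigr) = 1 - \tfrac{b^2}{\ell_2} < 0$; thus $a_1$ and $\tfrac{a_1}{\ell_1} - b$ have opposite signs, and $a_1 \le 0$ would force $\tfrac{a_1}{\ell_1} > b > 0$, a contradiction, so $a_1 > 0$. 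Running the identical computation with the two coordinates interchanged (now using $\tfrac{a^2}{\ell_1} > 1$) shows the second coordinate of $\TT_2(a,b)$ is positive. Combining the three steps yields $\TT_i(a,b) \in \mathcal{S}_{\ell_1,\ell_2}$.

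I expect no genuine obstacle here; the statement is a chain of elementary verifications. The one place to be slightly careful is the positivity step, where one must use the defining equation $Q = 1$ \emph{together with} the divisibility bounds $\tfrac{a^2}{\ell_1}, \tfrac{b^2}{\ell_2} > 1$ -- neither ingredient alone rules out the transformed first (resp. second) coordinate being nonpositive.
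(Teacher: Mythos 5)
Your proposal is correct and follows essentially the same route as the paper: invariance of the quadratic form $\tfrac{a^2}{\ell_1}-ab+\tfrac{b^2}{\ell_2}$ under $\TT_1,\TT_2$, preservation of the congruences, and positivity of the new coordinate deduced from $b_1=b>0$ together with $\tfrac{b^2}{\ell_2}>1$ (and symmetrically $\tfrac{a^2}{\ell_1}>1$), which is exactly the paper's argument. Your write-up merely makes explicit the sign analysis and the divisibility bound that the paper leaves implicit.
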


We now apply the transformations twice to obtain two
new elements of $\mathcal{S}_{\ell_1,\ell_2}$,
$$(a_{21},b_{21})= \TT_2(a_1,b_1), \ \ \ (a_{12},b_{12}) = \TT_1(a_2,b_2)\ .$$

\begin{lemma} If $(a,b)\in\mathcal{S}_{\ell_1,\ell_2}$
and \label{gg66}
$\frac{b}{a} > \xi_+$, then
$$a>a_{12}, \ \ b>b_{12}, \ \ \frac{b_{12}}{a_{12}} > \frac{b}{a} \ .$$
\end{lemma}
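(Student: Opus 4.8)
The plan is to compute $(a_{12},b_{12})$ explicitly and then read off all three inequalities from two facts already available: the equation $a^2R_{\ell_1,\ell_2}(b/a)=-ab+\frac{a^2}{\ell_1}+\frac{b^2}{\ell_2}=1$, valid for every $(a,b)\in\mathcal{S}_{\ell_1,\ell_2}$ (this is condition (i)); and the fact that, under the standing hypothesis $\ell_1,\ell_2>1$, every element of $\mathcal{S}_{\ell_1,\ell_2}$ has strictly positive coordinates. First I would unwind the definitions: $(a_2,b_2)=\TT_2(a,b)=(a,\ell_2a-b)$, and therefore
\[
(a_{12},b_{12})=\TT_1(a_2,b_2)=\bigl((\ell_1\ell_2-1)a-\ell_1b,\ \ell_2a-b\bigr).
\]
By Lemma \ref{pser}, both $(a_2,b_2)$ and $(a_{12},b_{12})$ again lie in $\mathcal{S}_{\ell_1,\ell_2}$, so in particular $a_{12}>0$ and $b_{12}=\ell_2a-b>0$.

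The main point is the slope inequality $\frac{b_{12}}{a_{12}}>\frac{b}{a}$, and here is where the invariance of the quadratic form does the work. Since $a,a_{12}>0$, this is equivalent to $ab_{12}-a_{12}b>0$, and a direct expansion gives
\[
ab_{12}-a_{12}b=\ell_2a^2-\ell_1\ell_2\,ab+\ell_1b^2=\ell_1\ell_2\left(\frac{a^2}{\ell_1}-ab+\frac{b^2}{\ell_2}\right)=\ell_1\ell_2\cdot a^2R_{\ell_1,\ell_2}\!\left(\frac{b}{a}\right).
\]
Because $(a,b)$ solves (i), the last expression equals $\ell_1\ell_2>0$, which is exactly what is wanted. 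Note that this step uses only that $(a,b)\in\mathcal{S}_{\ell_1,\ell_2}$, not the hypothesis $b/a>\xi_+$; the latter enters only in the size comparisons.

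For $b>b_{12}$: we have $b-b_{12}=2b-\ell_2a$, which is positive exactly when $b/a>\ell_2/2$. From $\xi_+=\frac{\ell_2}{2}\bigl(1+\sqrt{1-4/(\ell_1\ell_2)}\,\bigr)$ and the fact that $\ell_1,\ell_2>1$ forces $\ell_1\ell_2\ge 4$, the radicand is nonnegative, so $\xi_+\ge\ell_2/2$; hence $b/a>\xi_+$ gives $b/a>\ell_2/2$ and thus $b>b_{12}$. Finally, $a>a_{12}$ follows formally: all of $a,b,a_{12},b_{12}$ are positive, and combining $b_{12}<b$ with $a_{12}/b_{12}<a/b$ (the reciprocal of the slope inequality just proved) yields $a_{12}=b_{12}\cdot\frac{a_{12}}{b_{12}}<b\cdot\frac{a}{b}=a$. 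The algebra is entirely routine; the only genuinely load-bearing appeal to earlier material is Lemma \ref{pser}, which is what guarantees $a_{12}>0$ — without that positivity the slope inequality could not even be phrased as $ab_{12}-a_{12}b>0$. So I expect the one point needing care to be invoking the positivity of the iterated images cleanly, rather than any computation.
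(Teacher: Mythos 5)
Your proof is correct and follows essentially the same route as the paper: the same explicit formulas for $(a_{12},b_{12})$, the same appeal to Lemma \ref{pser} for positivity, the quadratic-form identity $a^2R_{\ell_1,\ell_2}(b/a)=1$ for the slope inequality, and the bound $\xi_+\geq \ell_2/2$ for the size comparison. The only (harmless) difference is that you deduce $a>a_{12}$ formally from $b>b_{12}$ together with the slope inequality, whereas the paper checks it directly via $\xi_+\geq \ell_2-\frac{2}{\ell_1}$.
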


\begin{proof}
Using the formula $a_{12}= \ell_1(\ell_2 a-b)-a$, we find
$a>a_{12}$ is equivalent to
\begin{equation}\label{btt4}
\frac{b}{a} > \ell_2 - \frac{2}{\ell_1} \ .
\end{equation}
But since $\frac{4}{\ell_1\ell_2} \leq 1$, we see
\begin{eqnarray*}
\xi_+ & =& \frac{\ell_2}{2} \left(1 + \sqrt{1-\frac{4}{\ell_1\ell_2}}
\ \right)\\
& \geq & \frac{\ell_2}{2} (1 + 1-\frac{4}{\ell_1\ell_2})\\
& \geq &  \ell_2 - \frac{2}{\ell_1} \ .
\end{eqnarray*}
Hence, inequality \eqref{btt4} follows from the slope assumption
$\frac{b}{a} > \xi_+$.

Similarly, using the formula $b_{12}= \ell_2 a-b$, we find
$b>b_{12}$ is equivalent to
$$\frac{b}{a} > \frac{\ell_2}{2} \ $$
which also follows form the slope assumption.

Since $(a_{12},b_{12}) \in \mathcal{S}_{\ell_1,\ell_2}$, we must have
$a_{12}>0$.
Using the ratio of the formulas for $b_{12}$ and $a_{12}$, we find
$$\frac{b_{12}}{a_{12}} = 
\frac{\ell_2-\frac{b}{a}}
{\ell_1(\ell_2-\frac{b}{a})-1}.$$
The third claim of the Lemma is
$$\frac{\ell_2-\frac{b}{a}}
{\ell_1(\ell_2-\frac{b}{a})-1} > \frac{b}{a}$$
which is equivalent to
$$0 > - R_{\ell_1,\ell_2}\left(\frac{b}{a}\right) = -\frac{1}{a^2}$$
since $(a,b)\in \mathcal{S}_{\ell_1,\ell_2}$.
\end{proof}

\begin{lemma}
\label{gg77}
 If $(a,b)\in\mathcal{S}_{\ell_1,\ell_2}$
and
$\frac{b}{a} < \xi_-$, then
$$a>a_{21}, \ \ b>b_{21}, \ \ \frac{b_{21}}{a_{21}} < \frac{b}{a} \ .$$
\end{lemma}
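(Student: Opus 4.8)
The plan is to derive Lemma~\ref{gg77} from Lemma~\ref{gg66} by the obvious symmetry: the whole setup is invariant under simultaneously swapping the two coordinates $a\leftrightarrow b$ and the two weights $\ell_1\leftrightarrow\ell_2$. Concretely, set $Q_{\ell_1,\ell_2}(a,b)=a^2R_{\ell_1,\ell_2}(b/a)=-ab+\frac{a^2}{\ell_1}+\frac{b^2}{\ell_2}$. This expression is visibly unchanged under $(a,b,\ell_1,\ell_2)\mapsto(b,a,\ell_2,\ell_1)$, and the divisibility conditions $a\equiv 0\bmod\ell_1$, $b\equiv 0\bmod\ell_2$ defining condition (i) are likewise interchanged, so $(a,b)\in\mathcal{S}_{\ell_1,\ell_2}$ if and only if $(b,a)\in\mathcal{S}_{\ell_2,\ell_1}$. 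Thus every statement proved for $\mathcal{S}_{\ell_1,\ell_2}$ transports to $\mathcal{S}_{\ell_2,\ell_1}$, and vice versa.

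First I would match the slope hypotheses. Writing $\xi'_\pm$ for the roots of $R_{\ell_2,\ell_1}$, a one-line computation gives $\xi_-\,\xi'_+=\frac{\ell_1\ell_2}{4}\bigl(1-(1-\tfrac{4}{\ell_1\ell_2})\bigr)=1$; since $\xi_->0$, the hypothesis $\frac{b}{a}<\xi_-$ of Lemma~\ref{gg77} is equivalent to $\frac{a}{b}>\xi'_+$, i.e.\ to the hypothesis of Lemma~\ref{gg66} for the point $(b,a)$ with weights $(\ell_2,\ell_1)$. Next I would check that the two order-two transformations are interchanged by the coordinate swap: with $(a_1,b_1)=\TT_1(a,b)$ and $(a_{21},b_{21})=\TT_2(a_1,b_1)$, a direct substitution shows $(b_1,a_1)=\TT_2^{(\ell_2,\ell_1)}(b,a)$ and $(b_{21},a_{21})=\TT_1^{(\ell_2,\ell_1)}(b_1,a_1)$, so that $(b_1,a_1)$ and $(b_{21},a_{21})$ are precisely the single and double iterates produced in Lemma~\ref{gg66} from the point $(b,a)$ with weights $(\ell_2,\ell_1)$.

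Now apply Lemma~\ref{gg66} to the point $(b,a)\in\mathcal{S}_{\ell_2,\ell_1}$ with weights $(\ell_2,\ell_1)$, using the hypothesis $\frac{a}{b}>\xi'_+$. Its three conclusions read $b>b_{21}$, $a>a_{21}$, and $\frac{a_{21}}{b_{21}}>\frac{a}{b}$. Rewriting the last inequality as $\frac{b_{21}}{a_{21}}<\frac{b}{a}$, these are exactly the three assertions of Lemma~\ref{gg77}, completing the proof.

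I expect the only delicate point to be the bookkeeping in the middle paragraph — verifying that $\TT_1$ for $(\ell_1,\ell_2)$ turns into $\TT_2$ for $(\ell_2,\ell_1)$ after the swap (and conversely), and that the iterates line up with the roles they play in Lemma~\ref{gg66}; everything else is a routine substitution. As an alternative that sidesteps formalizing the symmetry, one could simply repeat the proof of Lemma~\ref{gg66} line by line with $a$ and $b$, $\ell_1$ and $\ell_2$, and $\xi_+$ and $\xi_-$ interchanged throughout: each estimate invoked there — notably $\frac{4}{\ell_1\ell_2}\le 1$ and $\xi_+\ge\ell_2-\frac{2}{\ell_1}$ — has an evident mirror image, namely $\xi_-\ge \ell_1-\frac{2}{\ell_2}$ is \emph{not} needed but the bound $\frac1{\xi_-}=\xi'_+\ge \ell_1-\frac{2}{\ell_2}$ is, which holds for the same reason.
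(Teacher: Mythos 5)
Correct, and essentially the same route as the paper: the paper disposes of Lemma \ref{gg77} by declaring its proof identical to that of Lemma \ref{gg66}, i.e.\ by exactly the mirror symmetry $(a,b,\ell_1,\ell_2)\mapsto(b,a,\ell_2,\ell_1)$ that you make explicit through the relation $\xi_-\xi'_+=1$ and the intertwining of $\TT_1$ and $\TT_2$ under the coordinate swap. Your transport of the statement of Lemma \ref{gg66} is a legitimate formalization of that one-line remark (and the positivity of $a_{21},b_{21}$ needed to invert the final slope inequality is supplied by Lemma \ref{pser}, just as in the proof of Lemma \ref{gg66}).
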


The proof of Lemma \ref{gg77} is identical to the proof
of Lemma \ref{gg66}.
We are now prepared to prove the emptiness of $\mathcal{S}_{\ell_1,\ell_2}$.

\begin{lemma} For $\ell_1,\ell_2 >1$, we have 
$\mathcal{S}_{\ell_1,\ell_2}=\emptyset$.
\end{lemma}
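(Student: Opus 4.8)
The plan is a descent argument on the size $a+b$, powered entirely by Lemmas \ref{pser}, \ref{gg66}, and \ref{gg77}.

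First I would record the one structural fact that makes the descent go through: every element $(a,b)\in\mathcal{S}_{\ell_1,\ell_2}$ has slope $\tfrac{b}{a}$ lying \emph{strictly} outside the closed interval $[\xi_-,\xi_+]$. Indeed, membership in $\mathcal{S}_{\ell_1,\ell_2}$ means $a^2 R_{\ell_1,\ell_2}\!\left(\tfrac{b}{a}\right)=1$, so $R_{\ell_1,\ell_2}\!\left(\tfrac{b}{a}\right)=\tfrac{1}{a^2}>0$; since $R_{\ell_1,\ell_2}$ has positive leading coefficient, real zeros $\xi_-\le\xi_+$, and is negative on $(\xi_-,\xi_+)$, this forces $\tfrac{b}{a}<\xi_-$ or $\tfrac{b}{a}>\xi_+$. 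This also disposes of the boundary case $(\ell_1,\ell_2)=(2,2)$, where $\xi_-=\xi_+=1$ and $R_{2,2}(1)=0\neq\tfrac{1}{a^2}$, so again no admissible slope coincides with a root. I would also recall, as established just before Lemma \ref{pser}, that every element of $\mathcal{S}_{\ell_1,\ell_2}$ has strictly positive coordinates.

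Then I would run the descent. Suppose, for contradiction, that $\mathcal{S}_{\ell_1,\ell_2}\neq\emptyset$, and choose $(a,b)\in\mathcal{S}_{\ell_1,\ell_2}$ minimizing $a+b$; this is legitimate since $a,b\ge 1$ on $\mathcal{S}_{\ell_1,\ell_2}$. By the previous paragraph exactly one of $\tfrac{b}{a}>\xi_+$ or $\tfrac{b}{a}<\xi_-$ holds. If $\tfrac{b}{a}>\xi_+$, Lemma \ref{gg66} produces $(a_{12},b_{12})=\TT_1\TT_2(a,b)$ with $a>a_{12}$ and $b>b_{12}$; by Lemma \ref{pser} this point still lies in $\mathcal{S}_{\ell_1,\ell_2}$ (hence has positive coordinates) and satisfies $a_{12}+b_{12}<a+b$, contradicting minimality. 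If $\tfrac{b}{a}<\xi_-$, Lemma \ref{gg77} produces $(a_{21},b_{21})=\TT_2\TT_1(a,b)\in\mathcal{S}_{\ell_1,\ell_2}$ with $a_{21}+b_{21}<a+b$, again a contradiction. Hence $\mathcal{S}_{\ell_1,\ell_2}=\emptyset$.

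I do not expect a genuine obstacle here: the three preceding lemmas do all the work, and the only point requiring care is verifying that each element of $\mathcal{S}_{\ell_1,\ell_2}$ has slope truly outside $[\xi_-,\xi_+]$ so that one of the two descent lemmas always applies — which is immediate from $R_{\ell_1,\ell_2}\!\left(\tfrac{b}{a}\right)=\tfrac{1}{a^2}>0$. (Equivalently, one could minimize $a$ alone, using $a>a_{12}$ and $a>a_{21}$.)
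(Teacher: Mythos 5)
Your proof is correct and follows the paper's argument in all essentials: the paper likewise notes that $R_{\ell_1,\ell_2}(\tfrac{b}{a})>0$ forces $\tfrac{b}{a}>\xi_+$ or $\tfrac{b}{a}<\xi_-$, applies Lemma \ref{gg66} or \ref{gg77} to strictly decrease both coordinates, and derives a contradiction with Lemma \ref{pser} by iterating until the point would leave the first quadrant. Your packaging of the descent as a minimal-counterexample argument (minimizing $a+b$, or just $a$) is only a cosmetic variation on the paper's "after finitely many iterations" phrasing.
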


\begin{proof}
Suppose $(a,b)\in \mathcal{S}_{\ell_1,\ell_2}$ exists.
Then, since
$R_{\ell_1,\ell_2}(\frac{b}{a})>0$, we must have either
$$\frac{b}{a} > \xi_+ \ \ \text{or} \ \ \frac{b}{a} < \xi_- \ .$$
In the former case Lemma \ref{gg66} yields a new
element $(a_{12},b_{12})\in \mathcal{S}_{\ell_1,\ell_2}$
with strictly smaller values $a_{12}<a$ and $b_{12}<b$.
In the latter case, we use Lemma \ref{gg77}.
After finitely many iterations, we must exit the first
quadrant contradicting Lemma \ref{pser}.
\end{proof}

\label{ddf}

\subsubsection{Analysis of (ii)}
We assume $\ell_1,\ell_2>0$ and $(\ell_1,\ell_2)\neq (1,1)$.
 Let $\mathcal{A}_{\ell_1,\ell_2}$ be the set of solutions to
(ii) with $(a,b)\in \mathbb{Z}^2$ lying in the closed first quadrant.
When specialized to $b=0$,
the equation  of (ii),
\begin{equation*}
a^2R_{\ell_1,\ell_2}\left(\frac{b}{a}\right)=\frac{1}{\ell_1},
\end{equation*}
yields
$\frac{a^2}{\ell_1}= \frac{1}{\ell_1}$
which has a
single positive solution $a=1$.
As in Section \ref{ddf},
no solutions occur when $a=0$ (using $(\ell_1,\ell_2)\neq (1,1)$).
We conclude all elements of
 $\mathcal{A}_{\ell_1,\ell_2}$
lie strictly in the first
quadrant except for $(1,0)$.
Let 
$$\mathcal{A}^{*}_{\ell_1,\ell_2} = 
\mathcal{A}_{\ell_1,\ell_2} - \{(1,0)\} \ .$$
The proof of Lemma \ref{pser} immediately yields the following
result.

\begin{lemma} \label{gtgt} Both $\TT_1$ and $\TT_2$ map 
$\mathcal{A}^{*}_{\ell_1,\ell_2}$
to $\mathcal{A}_{\ell_1,\ell_2}$\ .
\end{lemma}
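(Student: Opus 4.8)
The plan is to run the argument of Lemma~\ref{pser} essentially word for word, with the value $1$ of the invariant quadratic form $a^2R_{\ell_1,\ell_2}(b/a)=-ab+\frac{a^2}{\ell_1}+\frac{b^2}{\ell_2}$ replaced by the value $\frac{1}{\ell_1}$ occurring in condition (ii), and with the divisibility conditions of (i) replaced by the conditions $a\equiv\pm1\pmod{\ell_1}$ and $b\equiv0\pmod{\ell_2}$ of (ii). By the discussion preceding the Lemma, every element of $\mathcal{A}^{*}_{\ell_1,\ell_2}$ lies strictly in the first quadrant, so I would fix $(a,b)\in\mathcal{A}^{*}_{\ell_1,\ell_2}$ with $a,b>0$ and show that $\TT_1(a,b)$ and $\TT_2(a,b)$ both satisfy (ii) and lie in the \emph{closed} first quadrant; that is precisely the condition to belong to $\mathcal{A}_{\ell_1,\ell_2}$.

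Two facts are reused verbatim from the proof of Lemma~\ref{pser}: that $\TT_1$ and $\TT_2$ are involutions, and that they preserve the quadratic form $a^2R_{\ell_1,\ell_2}(b/a)$. The latter immediately gives $\TT_i(a,b)$ the correct value $\frac{1}{\ell_1}$. Next I would check the congruences. For $\TT_1(a,b)=(\ell_1b-a,b)$ the first coordinate is $\equiv-a\pmod{\ell_1}$, hence still $\equiv\pm1$ since $\{+1,-1\}$ is symmetric modulo $\ell_1$, while the second coordinate is unchanged; for $\TT_2(a,b)=(a,\ell_2a-b)$ the first coordinate is untouched and the second is $\equiv-b\equiv0\pmod{\ell_2}$. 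So in each case the image again satisfies (ii).

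The one computation is positivity. For $\TT_1(a,b)=(a_1,b_1)$ one has $b_1=b>0$; and if $a_1<0$, so $a_1\le-1$, then in the identity $-a_1b_1+\frac{a_1^2}{\ell_1}+\frac{b_1^2}{\ell_2}=\frac{1}{\ell_1}$ the cross term satisfies $-a_1b_1\ge b_1\ge1$ and the two square terms are strictly positive, forcing the left-hand side to exceed $1\ge\frac{1}{\ell_1}$, a contradiction; hence $a_1\ge0$, and $\TT_1(a,b)\in\mathcal{A}_{\ell_1,\ell_2}$. The estimate for $\TT_2(a,b)$ is entirely analogous, bounding the cross term below by the untouched first coordinate $a\ge1$. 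I would flag positivity as ``the'' step of the proof, but it is not a real obstacle: a negative integer has square at least $1$, so passing to the smaller target value $\frac{1}{\ell_1}$ costs nothing.

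Finally I would point out why the target in the statement is $\mathcal{A}_{\ell_1,\ell_2}$ rather than $\mathcal{A}^{*}_{\ell_1,\ell_2}$: the boundary point $(1,0)$ really can be hit --- for instance $\TT_2$ carries $(1,\ell_2)\in\mathcal{A}^{*}_{\ell_1,\ell_2}$ to $(1,0)$. This is harmless, since $\mathcal{A}_{\ell_1,\ell_2}$ is the set one wants to control, but it explains why one should not expect $\TT_1,\TT_2$ to restrict to $\mathcal{A}^{*}_{\ell_1,\ell_2}$.
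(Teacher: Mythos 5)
Your proof is correct and takes essentially the same route as the paper, whose proof of this lemma is simply the observation that the argument of Lemma~\ref{pser} carries over; you run that argument with the value $1$ replaced by $\frac{1}{\ell_1}$ and the congruences of case (ii). Your slightly sharpened positivity estimate (concluding only that the new coordinate is $\geq 0$, since $a$ may equal $1$ and the image may land on the boundary) is exactly the adjustment required, and your example $\TT_2(1,\ell_2)=(1,0)$ correctly explains why the target is $\mathcal{A}_{\ell_1,\ell_2}$ rather than $\mathcal{A}^{*}_{\ell_1,\ell_2}$.
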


Assume further $(\ell_1,\ell_2) \notin \{(1,1),(1,2),(2,1), (1,3),(3,1) \}$.
The method used in Section \ref{ddf} to study the solutions in case (i)
yields a complete description of 
$\mathcal{A}^{*}_{\ell_1,\ell_2}$. 

\begin{proposition} The permissible vectors for $(\ell_1,\ell_2)$
obtained from case (ii) are
$$\mathcal{A}^{*}_{\ell_1,\ell_2} = \{ \  \TT_2(1,0), \ 
\TT_1(\TT_2(1,0)),\  \TT_2(\TT_1(\TT_2(1,0))),\  \TT_1(\TT_2(\TT_1(\TT_2(1,0)))),\ \ldots \}.$$
\end{proposition}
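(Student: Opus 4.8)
The plan is to prove the two inclusions separately. The inclusion $\supseteq$ will be a short induction along the zigzag, while the inclusion $\mathcal{A}^{*}_{\ell_1,\ell_2}\subseteq\{\,\TT_2(1,0),\,\TT_1\TT_2(1,0),\,\ldots\,\}$ will be obtained by running exactly the descent of Section~\ref{ddf}, which now terminates at the boundary solution $(1,0)\in\mathcal{A}_{\ell_1,\ell_2}$ rather than yielding a contradiction. Write $Q(a,b)=\frac{a^2}{\ell_1}-ab+\frac{b^2}{\ell_2}=a^2R_{\ell_1,\ell_2}(b/a)$; recall that $\TT_1,\TT_2$ preserve $Q$ and the congruences defining case (ii), and (Lemma~\ref{gtgt}) send $\mathcal{A}^{*}_{\ell_1,\ell_2}$ into $\mathcal{A}_{\ell_1,\ell_2}$. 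Any $(a,b)\in\mathcal{A}^{*}_{\ell_1,\ell_2}$ has $Q(a,b)=1/\ell_1>0$, so $R_{\ell_1,\ell_2}(b/a)>0$ and its slope lies in $[0,\xi_-)\cup(\xi_+,\infty)$. Since the five excluded pairs are exactly those with $\ell_1\ell_2<4$, we have $\ell_1\ell_2\geq4$, and Vieta's relations $\xi_++\xi_-=\ell_2$, $\xi_+\xi_-=\ell_2/\ell_1$ then give the elementary inequalities $\xi_+<\ell_2$, $2/\ell_1\leq\ell_2/2$, $\xi_-\leq\ell_2/2\leq\xi_+$, $\xi_->1/\ell_1$, together with the identity $\ell_1-1/\xi_-=1/\xi_+$. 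Finally, since $\TT_2(a,b)$ has slope $\ell_2-b/a$ and $\TT_1(a,b)$ has slope $1/(\ell_1-a/b)$, these relations yield the \emph{slope exchange}: $\TT_2$ carries a vector of slope $>\xi_+$ to one of slope $<\xi_-$, and $\TT_1$ carries a vector of slope $<\xi_-$ with positive first coordinate to one of slope $>\xi_+$.

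Next I would establish the two arithmetic facts on which everything hinges: for $(a,b)\in\mathcal{A}^{*}_{\ell_1,\ell_2}$ one has \textbf{(a)} $a<\ell_1 b$, and \textbf{(b)} $b<\ell_2 a$ unless $(a,b)=(1,\ell_2)=\TT_2(1,0)$. For (a), substitute $a=\ell_1 b+t$ with $t\geq0$ into $Q$: one gets $Q(a,b)=bt+\frac{t^2}{\ell_1}+\frac{b^2}{\ell_2}$, which already exceeds $1/\ell_1$ when $t\geq1$ (using $b\geq1$), while $t=0$ forces $b^2=\ell_2/\ell_1$, impossible given $b\equiv0\pmod{\ell_2}$, $b\geq1$ and $(\ell_1,\ell_2)\neq(1,1)$. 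Fact (b) is the symmetric computation $b=\ell_2 a+s$, $Q(a,b)=\frac{a^2}{\ell_1}+as+\frac{s^2}{\ell_2}$, which equals $1/\ell_1$ only if $s=0$ and $a=1$. It follows that $\TT_1$ sends each element of $\mathcal{A}^{*}_{\ell_1,\ell_2}$ to one with strictly positive first coordinate --- hence back into $\mathcal{A}^{*}_{\ell_1,\ell_2}$, never to $(1,0)$ --- while $\TT_2$ sends an element of $\mathcal{A}^{*}_{\ell_1,\ell_2}$ into $\mathcal{A}_{\ell_1,\ell_2}$, producing $(1,0)$ exactly when the element is $(1,\ell_2)$; and in the slope-exchange statements the outputs are genuinely in the complementary slope regime because (a), (b) force $a<\ell_1 b$ and $b<\ell_2 a$ respectively.

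For $\supseteq$, put $v_0=(1,0)$, $v_{2j-1}=\TT_2 v_{2j-2}$, $v_{2j}=\TT_1 v_{2j-1}$. Invariance of $Q$ and of the congruences shows every $v_n$ solves the defining equation of (ii) and meets its congruences, so it suffices to place each $v_n$ ($n\geq1$) in the open first quadrant. Now $v_1=(1,\ell_2)$ does, with slope $\ell_2>\xi_+$; and inductively, if $v_{2j-1}=(a,b)$ has positive coordinates and slope $>\xi_+$, then $v_{2j}=\TT_1 v_{2j-1}=(\ell_1 b-a,\,b)$ has first coordinate $\ell_1 b-a>a$ (the slope exceeds $\xi_+\geq2/\ell_1$), hence positive coordinates, and slope $<\xi_-$ by slope exchange; symmetrically $v_{2j+1}=\TT_2 v_{2j}$ has positive coordinates and slope $>\xi_+$. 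So $v_n\in\mathcal{A}^{*}_{\ell_1,\ell_2}$ for all $n\geq1$, and since one coordinate strictly increases at each step the $v_n$ are pairwise distinct; this gives $\supseteq$.

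For $\subseteq$, define $\delta\colon\mathcal{A}^{*}_{\ell_1,\ell_2}\to\mathcal{A}_{\ell_1,\ell_2}$ by $\delta(a,b)=\TT_2(a,b)$ if $b/a>\xi_+$ and $\delta(a,b)=\TT_1(a,b)$ if $b/a<\xi_-$ (well defined by the slope dichotomy). By the facts above, $\delta(a,b)\in\mathcal{A}_{\ell_1,\ell_2}$ --- in particular it lies in the closed first quadrant --- and its coordinate sum strictly decreases: it drops by $2b-\ell_2 a=a(2b/a-\ell_2)>0$ when $b/a>\xi_+\geq\ell_2/2$, and by $2a-\ell_1 b>0$ in the other case. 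As coordinate sums in $\mathcal{A}^{*}_{\ell_1,\ell_2}$ are positive integers, iterating $\delta$ from any $(a,b)\in\mathcal{A}^{*}_{\ell_1,\ell_2}$ must after finitely many steps leave $\mathcal{A}^{*}_{\ell_1,\ell_2}$, and since $\delta$ always lands in $\mathcal{A}_{\ell_1,\ell_2}=\mathcal{A}^{*}_{\ell_1,\ell_2}\cup\{(1,0)\}$ the exit value is $(1,0)$; moreover the transformations applied alternate between $\TT_1$ and $\TT_2$ (each output lies in the complementary slope regime, and $\TT_1$ never outputs $(1,0)$), so the final one is $\TT_2$ applied to $(1,\ell_2)$. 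Reversing this alternating chain $(a,b)\to\ldots\to(1,\ell_2)\to(1,0)$ and using $\TT_i^2=\text{Id}$ exhibits $(a,b)$ as $\TT_2(1,0)$, $\TT_1\TT_2(1,0)$, $\TT_2\TT_1\TT_2(1,0)$, or a further term of the zigzag, proving $\subseteq$. The step I expect to be the real obstacle is the pair of facts (a), (b): they are precisely what keeps $\delta$ in the closed first quadrant and pins the terminus of the descent to $(1,0)$, and their proofs essentially use the case-(ii) congruences --- the borderline vectors $(\ell_1 b,b)$ and $(a,\ell_2 a)$ would otherwise wreck both conclusions, and they are excluded only because $b\equiv0\pmod{\ell_2}$ together with $\ell_1\ell_2\geq4$ makes $b^2=\ell_2/\ell_1$ impossible; the accompanying bounds on $\xi_\pm$ are routine but must be verified against $\ell_1\ell_2\geq4$.
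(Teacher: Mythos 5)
Your proof is correct and follows essentially the same route as the paper: a descent driven by $\TT_1,\TT_2$ according to whether the slope lies above $\xi_+$ or below $\xi_-$, terminating at $(1,0)$ via Lemma \ref{gtgt} (the paper runs the descent in composite steps $\TT_1\TT_2$, $\TT_2\TT_1$ so that both coordinates drop, whereas you use single steps with the coordinate sum as monovariant). Your write-up is in fact more complete than the paper's sketch --- facts (a), (b), the identification of the terminal step $(1,\ell_2)\mapsto(1,0)$, and the explicit $\supseteq$ induction are left implicit there; the only slip is that your displayed ``slope exchange'' records the two directions used in the descent while the $\supseteq$ induction invokes the opposite two, which follow just as immediately from the slope formulas $\ell_2-b/a$ and $1/(\ell_1-a/b)$ you already wrote down.
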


\begin{proof}
Start with any solution $(a,b) \in \mathcal{A}^{*}_{\ell_1,\ell_2}$.
Depending upon whether $\frac{b}{a}$ is greater than $\xi_+$ or less than 
$\xi_-$ apply  $\TT_1\TT_2$ or $\TT_2\TT_1$. The result
is a solution $(a',b')$ with $a'<a$ and $b'<b$.
By iterating the process, the
solution must eventually 
leave the strict first quadrant. By Lemma \ref{gtgt},
we conclude some chain of applications of $\TT_1$ and $\TT_2$ to $(a,b)$
yields $(1,0)$.
\end{proof}

For the cases $(\ell_1,\ell_2) \in \{(1,1),(1,2),(2,1), (1,3),(3,1) \}$,
the group generated by $\TT_1$ and $\TT_2$ is finite and, in each case,
contains elements that move every $(a,b)$ strictly in the
first quadrant out of the strict first quadrant. Hence, every element
of $\mathcal{A}^{*}_{\ell_1,\ell_2}$ can be reached from $(1,0)$
by a chain of applications of $\TT_1$ and $\TT_2$. Since the sets
are finite, we can list all the elements:
$$
\mathcal{A}^{*}_{1,1} = \{(1,1)\}, \ \
\mathcal{A}^{*}_{1,2} = \{(1,2) \},\ \ \mathcal{A}^{*}_{2,1} = \{(1,1) \},$$
$$\mathcal{A}^{*}_{1,3} = \{ (1,3), \ (2,3)\},\ \
 \mathcal{A}^{*}_{3,1} = \{ (1,1), \ (2,1)\} \ .
$$

\subsubsection{Analysis of (iii)} \label{ddf3}
Of course the discussion of (iii) is identical to (ii).
Let $\mathcal{B}^*_{\ell_1,\ell_2}$ be the set of solutions to
(iii) with $(a,b)\in \mathbb{Z}^2$ lying strictly in the first quadrant.
For
$(\ell_1,\ell_2) \notin \{(1,1),(1,2),(2,1), (1,3),(3,1) \}$,
$$\mathcal{B}^{*}_{\ell_1,\ell_2} = \{ \  \TT_1(0,1), \ 
\TT_2(\TT_1(0,1)),\  \TT_1(\TT_2(\TT_1(0,1))),\  
\TT_2(\TT_1(\TT_2(\TT_1(0,1)))),\ \ldots \}.$$
The special cases are:
$$
\mathcal{B}^{*}_{1,1} = \{(1,1)\}, \ \
\mathcal{B}^{*}_{1,2} = \{(1,1) \},\ \ \mathcal{B}^{*}_{2,1} = \{(2,1) \},$$
$$\mathcal{B}^{*}_{1,3} = \{ (1,1), \ (1,2)\},\ \
 \mathcal{B}^{*}_{3,1} = \{ (3,1), \ (3,2)\} \ .
$$

\subsection{Results for scattering patterns}
Let $\ell_1,\ell_2>0$.
Our main result for scattering patterns determines the set of permissible
vectors for $(\ell_1,\ell_2)$.

\vspace{10pt}
\noindent{\bf Theorem 5.} {\em
If $(\ell_1,\ell_2) \notin \{(1,1),(1,2),(2,1), (1,3),(3,1) \}$, then
the set $\mathcal{P}(\ell_1,\ell_2)$ of permissible vectors 
is the disjoint union}
$$\mathcal{P}_{\ell_1,\ell_2}= \mathcal{A}^*_{\ell_1,\ell_2}\ \cup \ \mathcal{B}^*_{\ell_1,\ell_2}
\ \cup \ \{\ (a,b) \in \mathbb{Z}^2 \ | \ \xi_- \leq \frac{b}{a} \leq \xi_+  \ \}\ .$$ 
\vspace{10pt}

Theorem 5 is simply a summary of the result of Sections \ref{bnm1}-\ref{bnm2}.
The sets of permissible vectors for the special pairs
$(\ell_1,\ell_2)$ excluded in Theorem 5 are:
$$
\mathcal{P}_{1,1} = \{(1,1)\}, \ \
\mathcal{P}_{1,2} = \{(1,2),(1,1) \},\ \ \mathcal{P}_{2,1} = \{(1,1),(2,1) \},$$
$$\mathcal{P}_{1,3} = \{ (1,3),(2,3),(1,1), \ (1,2)\},\ \
 \mathcal{P}_{3,1} = \{ (1,1),(2,1),(3,1), \ (3,2)\} \ .
$$

Returning to Question 4, consider the ordered product 
factorization \eqref{jjtrrrr}
of the commutator. We have proven in Section \ref{geni} the
implication 
$$f_{a,b}\neq 1 \implies
 (a,b) \in \mathcal{P}_{\ell_1,\ell_2}\ . $$
In other words, the scattering pattern associated to $\ell_1$ and $\ell_2$ 
is contained in the directions of $\mathcal{P}_{\ell_1,\ell_2}$.
Theorem 5 completely determines $\mathcal{P}_{\ell_1,\ell_2}$.
In the nontrivial cases $(\ell_1,\ell_2)=(2,2), (3,3)$ and $(2,3)$ analyzed in 
\S\ref{exxc},
the behaviour claimed (via calculations)
fits precisely with the results predicted by Theorem 5.
For $\ell_1=\ell_2=m$, the containment of the scattering pattern in
$\mathcal{P}_{m,m}$ was conjectured previously by Gross-Siebert and Kontsevich
based on computational data.

While very tempting to believe,
we have {\em not} proven the reverse implication
\begin{equation}\label{tempt}
(a,b) \in \mathcal{P}_{\ell_1,\ell_2} \implies f_{a,b}\neq 1.
\end{equation}
Certainly \eqref{tempt} is consistent with all
the gathered data. If $\ell_1=\ell_2=m$, the equivalence
\begin{equation*}
(a,b) \in \mathcal{P}_{m,m} \iff f_{a,b}\neq 1
\end{equation*}
can be proven 
via the existence of
$(1,0)$-semistable representations of the quiver $Q_m$ discussed 
in Section \ref{qq22} below.

\subsection{Quivers}
\label{qq22}
If $\ell_1$ and $\ell_2$ are both equal to $m$, then
Question 4 is related to the existence
of $(1,0)$-semistable representations of $Q_m$ by Theorem 1.

\begin{proposition} For $m=\ell_1=\ell_2$ and 
primitive
$(a,b) \in \mathbb{Z}^2$ lying strictly in the first quadrant,
the following are equivalent:
\begin{enumerate}
\item[(i)] $f_{a,b}\neq 1$,
\item[(ii)] there exists a nonzero $(1,0)$-semistable representation of $Q_m$
with dimension vector proportional to $(a,b)$,
\item[(iii)] there exists a nonzero $(1,0)$-stable back  framed
representation of $Q_m$ with dimension vector
 proportional to $(a,b)$,
\item[(iv)] there exists a nonzero $(1,0)$-stable front  framed
representation of $Q_m$ with dimension vector
 proportional to $(a,b)$,
\end{enumerate}
\end{proposition}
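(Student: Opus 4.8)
\noindent\emph{Proof idea.}
The plan is to prove the four conditions equivalent by going around the cycle (i) $\Rightarrow$ (iii) $\Rightarrow$ (ii) $\Rightarrow$ (iv) $\Rightarrow$ (i), invoking Theorem~1 (valid since $\ell_1=\ell_2=m$) only in the two implications that mention the series $f_{a,b}$, and carrying out everything else inside the abelian category of representations of $Q_m$.

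First I would prove (i) $\Rightarrow$ (iii). By Theorem~1, $f_{a,b}=(B_{a,b})^{m/a}$ with $B_{a,b}=1+\sum_{k\ge1}\chi\big(\shM_m^{(1,0),B}(ak,bk)\big)(tx)^{ak}(ty)^{bk}$; since $u\mapsto u^{m/a}$ is injective on power series with constant term $1$ (take $\log$ and multiply by $m/a\ne0$), $f_{a,b}\ne1$ forces $B_{a,b}\ne1$, hence $\chi\big(\shM_m^{(1,0),B}(ak,bk)\big)\ne0$ for some $k\ge1$, and in particular this moduli space is nonempty; a point of it is precisely a nonzero $(1,0)$-stable back-framed representation with dimension vector $(ak,bk)$, which is proportional to $(a,b)$. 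The implication (iii) $\Rightarrow$ (ii) is formal: if $(\rho,L_1)$ is such a framed representation with dimension vector $n(a,b)$, $n\ge1$, then $\rho$ is $(1,0)$-semistable by the very definition of framed stability, and $\rho\ne0$ because $a,b>0$.

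For (ii) $\Rightarrow$ (iv) (and, symmetrically, (ii) $\Rightarrow$ (iii)) I would pass to a stable constituent. A nonzero $(1,0)$-semistable $\rho$ with $\dim(\rho)=n(a,b)$ has $(1,0)$-slope $\mu(\rho)=\frac{a}{a+b}$, and the category of $(1,0)$-semistable representations of slope $\frac{a}{a+b}$ is abelian of finite length, with simple objects the $(1,0)$-stable representations of that slope (\cite{King},\cite{R1}); hence $\rho$ has a Jordan--H\"older filtration whose factors are nonzero $(1,0)$-stable representations of slope $\frac{a}{a+b}$. Choose such a factor $\sigma$. Since $(a,b)$ is primitive and $\frac{\dim V_1(\sigma)}{\dim V_1(\sigma)+\dim V_2(\sigma)}=\frac{a}{a+b}$, we get $\dim(\sigma)=m'(a,b)$ with $m'\ge1$. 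Now take any line $L_2\subset V_2(\sigma)$, which exists since $\dim V_2(\sigma)=m'b>0$; because $\sigma$ is \emph{stable}, every proper subrepresentation of $\sigma$ has slope strictly less than $\mu(\sigma)$, so $(\sigma,L_2)$ satisfies both conditions for stability of a front-framed representation, and its dimension vector $m'(a,b)$ is proportional to $(a,b)$. A line in $V_1(\sigma)$ (nonzero since $m'a>0$) gives (iii) in the same way.

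Finally, (iv) $\Rightarrow$ (i): a nonzero $(1,0)$-stable front-framed representation with dimension vector $n(a,b)$ makes $\shM_m^{(1,0),F}(an,bn)$ nonempty, so $\chi\big(\shM_m^{(1,0),F}(an,bn)\big)>0$, whence $F_{a,b}\ne1$ and, by Theorem~1, $f_{a,b}=(F_{a,b})^{m/b}\ne1$. The step I expect to be the genuine obstacle is exactly this last inequality $\chi>0$: nonemptiness does not by itself force a nonzero Euler characteristic, so one must use that the framed moduli spaces of $Q_m$, besides being smooth, irreducible and projective, have cohomology concentrated in even degrees — so that $\chi\ge\dim H^0=1$ once they are nonempty — which is part of Reineke's point-counting analysis in \cite{R2} and is also visible from the torus actions studied in \cite{TTT}. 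The remaining implications merely repackage definitions and the abelian-category structure already used in the paper, so I expect them to present no difficulty.
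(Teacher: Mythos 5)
Your proposal is correct and follows essentially the same route as the paper: Theorem 1 for the implications involving $f_{a,b}$, vanishing of odd cohomology of the smooth projective framed moduli spaces to convert nonemptiness into positive Euler characteristic, the definition of framed stability for the descent to (ii), and passage to a $(1,0)$-stable constituent of the same slope (your Jordan--H\"older factor is the paper's iterated subrepresentation) together with the observation that any framing of a stable representation is automatically stable. The only cosmetic difference is the cyclic organization and the attribution of the even-cohomology fact (the paper cites \cite{KW,RRR} rather than \cite{R2,TTT}), neither of which affects the argument.
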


\begin{proof} By Theorem 1, (i) implies 
(iii) and (iv).
The moduli spaces
$\shM^{(1,0),B}_m(d_1,d_2)$ and $\shM^{(1,0),F}_m(d_1,d_2)$
are nonsingular projective varieties with no odd cohomology
\cite{KW,RRR}. For such spaces, nonemptyness
implies positive Euler characteristic.{\footnote{See
\cite{TTT} for better bounds in certain cases.}} 
Hence, again by Theorem 1, (iii) and (iv) are
equivalent and imply (i).
By the definition of $(1,0)$-stability for framed representions,
the underlying standard representation is $(1,0)$-semistable.
So (iii) and (iv) imply (ii).

If (ii) holds, then there exists a $(1,0)$-semistable
representation $\rho$ of $Q_m$ with slope
$$\mu(\rho)= \frac{a}{a+b}\ .$$
We will show there exists a subrepresentation
$\widehat{\rho}\subset \rho$ of the same slope which
is $(1,0)$-stable. If $\rho$ is $(1,0)$-stable, then take
$\widehat{\rho}= \rho$.
If $\rho$ is strictly $(1,0)$-semistable, then $\rho$
must contain a smaller nonzero $(1,0)$-semistable representation 
of slope $\frac{a}{a+b}$, and we repeat.
By finiteness of chains, we must eventually find
a $(1,0)$-stable $\widehat{\rho}$. Since
$$\mu(\widehat{\rho})= \frac{a}{a+b}\ ,$$
the dimension vector of $\widehat{\rho}$ is proportional
to $(a,b)$.
For a $(1,0)$-stable standard representation
$\widehat{\rho} = (\widehat{V}_1, \widehat{V}_2, \tau_1, \ldots, \tau_m)$, 
{\em every}  choice of
 framing data $L_i\subset \widehat{V}_i$ yields a
$(1,0)$-stable framed representation. Hence, (ii)
implies (iii) and (iv).
\end{proof}

Reineke has provided us a proof of 
the following result about
representations of $Q_m$. Given two dimension vectors
${\bf d}=(d_1,d_2)$ and ${\bf e}=(e_1,e_2)$, let
$$\langle {\bf d},{\bf e} \rangle = d_1e_1+d_2e_2 - m d_1e_2.$$
The form $\langle, \rangle$ is {\em not} symmetric.

\vspace{10pt}
\noindent{\bf Proposition 4.15.} (Reineke)
{\em Let ${\bf d}\in \mathbb{Z}^2$ be a primitive vector
lying in the first quadrant.
There exists a $(1,0)$-semistable representation of $Q_m$
with dimension vector proportional to ${\bf d}$ if and only if
$\langle {\bf d},{\bf d}\rangle \leq 1$.}
\vspace{10pt}

\begin{proof}
We start by proving the {\em only if} claim.
Let $\rho$ be a $(1,0)$-semistable representation of $Q_m$
with dimension vector proportional to ${\bf d}$.
We can (as before) assume $\rho$ is $(1,0)$-stable by
passing to a subrepresentation if necessary.
Stability implies $\text{Hom}(\rho,\rho)= \mathbb{C}$. Hence,
$$ \langle{\bf d},{\bf d}\rangle =\text{dim}_{\mathbb{C}}
 \ \text{Hom}(\rho,\rho) -
\text{dim}_{\mathbb{C}} \ \text{Ext}^1(\rho,\rho) \leq 1,$$
where the first
equality is by direct calculation{\footnote{All 
$\text{Ext}^{i\geq 2}(\rho,\rho)$
vanish for quiver representations \cite{R1}.}}.

For the claim in the other direction,
suppose there does not exist a $(1,0)$-semistable
representation with dimension vector ${\bf d}$.
By Corollary 3.5 of \cite{RRR}, there exists a proper{\footnote{By
properness, $s$ is at least 2.}}
decomposition
$$ {\bf d} = {\bf d}^1 +  \ldots + {\bf d}^s$$
into nonzero dimension vectors of $(1,0)$-semistable
representations of $Q_m$ satisfying
$$\mu({\bf d}^1) > \ldots > \mu({\bf d}^s)$$
and $\langle {\bf d}^i,{\bf d}^j\rangle=0$ for all $i<j$.
Let ${\bf e} = {\bf d}^1$ and ${\bf f}= {\bf d}^2 +  \ldots + {\bf d}^s$.
Then,
$${\bf d} = {\bf e} + {\bf f}, \ \ \ \ \mu({\bf e})> \mu({\bf f}), \ \ \ \ 
\langle {\bf e},{\bf f}\rangle=0\ .$$
After writing the last two inequalities as
$$\frac{e_1}{e_2} > \frac{f_1}{f_2}, \ \ \ \ \ \
e_1f_1+e_2f_2 - me_1f_2 = 0$$
and elementary manipulation, we obtain both 
$\langle {\bf e},{\bf e}\rangle>0$
and 
$\langle {\bf f},{\bf f}\rangle>0$.
Moreover,
$$\langle {\bf f},{\bf e}\rangle = e_1f_1+e_2f_2 -me_2f_1 =
m(e_1f_2-e_2f_1) >0.$$
Putting the results together, we conclude
$$\langle {\bf d},{\bf d}\rangle = 
\langle {\bf e},{\bf e}\rangle +
\langle {\bf f},{\bf f}\rangle +\langle {\bf e},{\bf f}\rangle +\langle {\bf f},{\bf e}\rangle \geq 3$$
since all summands are positive except $\langle {\bf e},{\bf f}\rangle=0$.
We have contradicted the assumption
$\langle {\bf d},{\bf d}\rangle \leq 1$.
\end{proof}

For primitive $(a,b) \in \mathbb{Z}^2$ lying strictly
in the first quadrant,
$$a^2 R_{m,m} \left(\frac{b}{a}\right) = \frac{1}{m}\langle (a,b),(a,b)\rangle\ .$$
Proposition 4.15
precisely produces $(1,0)$-semistable representations of $Q_m$
in all the permissible directions. The proof of the claim
\begin{equation}
\label{hhhnn}
(a,b) \in \mathcal{P}_{m,m} \iff f_{a,b}\neq 1
\end{equation}
is complete.
We do not know a proof of \eqref{hhhnn} via rational curve counting
on toric surfaces.

\subsection{Further commutators}
Commutators of more general elements
of the tropical vertex group may be similarly considered. 
Let 
\begin{eqnarray*}
p_1(t,x)& =& 1 + c_1 (tx)^1 + c_2 (tx)^2 + \ldots +c_{\ell_1} (tx)^{\ell_1}, \\
p_2(t,y)& =& 1 + c'_1 (ty)^1 + c'_2 (ty)^2 + \ldots +c'_{\ell_2} (ty)^{\ell_2}
\end{eqnarray*}
be polynomials of degrees $\ell_1$ and $\ell_2$ respectively, and let
$${\mathcal{S}}_{\ell_1}=\theta_{(1,0),p_1(t,x)}, \ \ \ \
{\mathcal{T}}_{\ell_2}=\theta_{(0,1), p_2(t,y)}\ . $$
Our proof of Theorem 5 yields the following result.

\vspace{10pt}
\noindent{\bf Corollary 6.} {\em The scattering pattern associated to
the commutator
\begin{equation*} 
    {\mathcal{T}}_{\ell_2}^{-1} \circ {\mathcal{S}}_{\ell_1} \circ 
{\mathcal{T}}_{\ell_2}
\circ {\mathcal{S}}_{\ell_1}^{-1} =
\stackrel{\rightarrow} \prod \theta_{(a,b),f_{a,b}}\ ,
\end{equation*}
lies in the set $\mathcal{P}_{\ell_1,\ell_2}$.}
\vspace{10pt}

\begin{proof}
By factoring $p_1$ and $p_2$ over $\mathbb{C}$,
 we may instead consider the
scattering pattern associated to 
the commutator of the elements
$${\mathcal{S}}_{\ell_1}=\theta_{(1,0), (1+t_1x)(1+t_2x) \cdots
(1+t_{\ell_1}x)}\ , \ \ \ \ \ \
{\mathcal{T}}_{\ell_2}=\theta_{(0,1), (1+s_1y)(1+s_2y) \cdots
(1+s_{\ell_2}y)}\ $$
in the tropical vertex group over the
ring $\mathbb{C}[[t_1, \ldots, t_{\ell_1}, s_1,\ldots, s_{\ell_2}]]$.
By using the full strength of Theorem 5.4 of \cite{GPS},
the scattering pattern is constrained by the
{\em same} analysis as in Section 4.
\end{proof}

For $\ell_1'\leq \ell_1$ and $\ell_2'\leq \ell_2$,
Corollary 6 suggests the inclusion
$$\mathcal{P}_{\ell'_1,\ell_2'} \subset \mathcal{P}_{\ell_1,\ell_2} \ $$
which can easily be verified directly.
Finally, commutators of the elements
$$\theta_{(v_1,v_2),p_1(t,x^{v_1}y^{v_2})} \ \ \ \text{and} \ \ \ 
\theta_{(w_1,w_2), p_2(t,x^{w_1}y^{w_2})}\  $$
can be transformed to the case constrained by Corollary 6.
We leave the details to the reader.

\section{Symmetry of the scattering diagram}

\subsection{Transformations $\TT_1$ and $\TT_2$}

We return to  the basic elements
$$S_{\ell_1}=\theta_{(1,0),(1+tx)^{\ell_1}} \ \  \text{and} \ \ 
T_{\ell_2}=\theta_{(0,1), (1+ty)^{\ell_2}}\ $$
of the tropical vertex group and
the unique
factorization
\begin{equation}\label{jjtrrrrt}
    T_{\ell_2}^{-1} \circ S_{\ell_1} \circ T_{\ell_2}  
\circ S_{\ell_1}^{-1} =
\stackrel{\rightarrow} \prod \theta_{(a,b),f_{a,b}}\ . 
\end{equation}
We have seen 
$f_{a,b}$ is a series in the variable $(tx)^a(ty)^b$,
$$f_{a,b}(t,x,y) = \mathsf{f}_{a,b}\Big((tx)^a(ty)^b\Big)\  $$
where $\mathsf{f}_{a,b}(z) \in \mathbb{Q}[[z]]$.
By the following result, 
the factorization \eqref{jjtrrrrt}
is symmetric  with respect to the
transformations 
$$\TT_1(a,b)= (\ell_1 b-a,b), \ \ \ \TT_2(a,b)=(a, \ell_2a-b)\ .$$
of Section \ref{ddf}.

\vspace{+10pt}
\noindent {\bf Theorem 7.} {\em Let $(a,b) \in \mathbb{Z}^2$ be a
primitive vector lying strictly in the first quadrant.
If $\TT_1(a,b)$  lies strictly in the first quadrant, then
$$\mathsf{f}_{a,b} = \mathsf{f}_{\TT_1(a,b)}\ .$$
Similarly, if $\TT_2(a,b)$ lies strictly in the first quadrant, then
$\mathsf{f}_{a,b} = \mathsf{f}_{\TT_2(a,b)}$.}
\vspace{10pt}

We will 
prove Theorem 7 in Section \ref{ggtt2} via Theorem 2 and symmetries of Gromov-Witten
invariants of toric surfaces.

\subsection{Curve counting symmetry} \label{ggtt2}

Following the notation of Section \ref{ttss},
let ${\bf P}_a$ and ${\bf P}_b$ be ordered
partitions,
\begin{eqnarray*}
{\bf P}_a&=&p_1+\cdots+p_{\ell_1},\\
{\bf P}_b&=&p_1'+\cdots+p'_{\ell_2},
\end{eqnarray*}
of size $ak$ and $bk$ respectively.
Define partitions ${\bf P}_a'$ and ${\bf P}_b'$
by 
\begin{eqnarray*}
{\bf P}_a'&=&(bk-p_1)+\cdots+(bk-p_{\ell_1}),\\
{\bf P}_b'&=&(ak-p'_1)+\cdots+(ak-p'_{\ell_2}).
\end{eqnarray*}

The following symmetry of Gromov-Witten invariants is the main
step in the proof of Theorem 7.

\begin{proposition} \label{mgmg}
$
N_{a,b}[({\bf P}_a,{\bf P}_b)]=
N_{\ell_1 b-a,b}[({\bf P}_a',{\bf P}_b)]
=N_{a,\ell_2a-b}[({\bf P}_a,{\bf P}_{b}')].
$
\end{proposition}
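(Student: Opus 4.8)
The plan is to realise the transformation $\TT_1$ (and, symmetrically, $\TT_2$) geometrically as a birational transformation of the blown-up toric surfaces, and then to deduce the equalities of Gromov--Witten invariants from an induced isomorphism of moduli spaces of relative stable maps. By Proposition~4.2 of \cite{GPS} the invariant $N_{a,b}[({\bf P}_a,{\bf P}_b)]$ does not depend on the locations of the $\ell_1+\ell_2$ blow-up points on $D_1^o$ and $D_2^o$, so I am free to choose the points conveniently on both sides of the asserted identity. I would then construct a Cremona-type transformation --- an explicit composition of a $\GL_2(\ZZ)$ change of fan with toric blow-ups at the torus-fixed points $[1,0,0]$, $[0,1,0]$, $[0,0,1]$ and toric blow-downs --- carrying $X_{a,b}[({\bf P}_a,{\bf P}_b)]$ to $X_{\ell_1 b-a,b}[({\bf P}'_a,{\bf P}_b)]$. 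The decisive point is that, by Lemmas~\ref{jj23} and~\ref{jj24}, every stable map in $\overline{\foM}(X^o_{a,b}[({\bf P}_a,{\bf P}_b)]/D^o_\out)$ has image disjoint from the torus-fixed points and from the strict transforms $D_1^{\mathrm{strict}}$ and $D_2^{\mathrm{strict}}$; consequently the transformation, though only birational globally, restricts to an \emph{isomorphism of smooth surfaces} on an open neighbourhood of this locus on each side, identifying $D^o_\out$, $D_2^o$ and the exceptional divisors $E'_j$ with their counterparts while replacing the data on the $D_1^o$-side.

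Next I would track the curve class and contact data through the transformation. Writing ${\bf P}_a=p_1+\cdots+p_{\ell_1}$, the exceptional divisor $E_i$ over the point $P_i\in D_1^o$, which the counted curve meets with multiplicity $p_i$, is replaced by a divisor met with multiplicity $bk-p_i$; the shift $p_i\mapsto bk-p_i$ (the $bk$ coming from $\beta_k\cdot D_2=bk$) together with the change $(a,b)\mapsto(\ell_1 b-a,b)$ is forced by requiring that all intersection numbers with boundary and exceptional divisors be preserved, and is the combinatorial shadow of $C'\cdot D_1^{\mathrm{strict}}=0$ from Lemma~\ref{jj23}. One then checks that
$$\nu^*(\beta_k)-\sum_i p_iE_i-\sum_j p'_jE'_j \ \longmapsto\ \nu^*(\beta_k)-\sum_i(bk-p_i)\overline{E}_i-\sum_j p'_j\overline{E}'_j \ =\ \beta_k[({\bf P}'_a,{\bf P}_b)],$$
that arithmetic genus $0$ is preserved, and that full contact of order $k$ at a point of $D^o_\out$ is preserved since $D^o_\out$ is untouched. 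Hence the transformation induces an isomorphism
$$\overline{\foM}\big(X^o_{a,b}[({\bf P}_a,{\bf P}_b)]/D^o_\out\big)\ \stackrel{\sim}{\longrightarrow}\ \overline{\foM}\big(X^o_{\ell_1 b-a,b}[({\bf P}'_a,{\bf P}_b)]/D^o_\out\big),$$
compatible with the relative obstruction theories (the transformation is an isomorphism of smooth surfaces near the curves and preserves the relative divisor $D^o_\out$). The virtual classes therefore correspond, giving $N_{a,b}[({\bf P}_a,{\bf P}_b)]=N_{\ell_1 b-a,b}[({\bf P}'_a,{\bf P}_b)]$. The second equality $N_{a,b}[({\bf P}_a,{\bf P}_b)]=N_{a,\ell_2 a-b}[({\bf P}_a,{\bf P}'_b)]$ follows by the same argument with the roles of $D_1$ and $D_2$, hence of $(\ell_1,a)$ and $(\ell_2,b)$, exchanged.

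The main obstacle is the construction and verification of the birational transformation itself: writing down the explicit sequence of toric modifications (and the $\GL_2(\ZZ)$ twist) that effects $\TT_1$, and then checking that it genuinely restricts to an isomorphism on an open set containing \emph{all} of $\overline{\foM}(X^o_{a,b}[({\bf P}_a,{\bf P}_b)]/D^o_\out)$ --- in particular near the points $D_1^{\mathrm{strict}}\cap E_i$ and near the toric fixed points --- while keeping careful track of curve classes and contact orders in the presence of the orbifold structure of the weighted projective planes $X_{a,b}$ and $X_{\ell_1 b-a,b}$. A secondary technical point is the comparison of virtual classes under an isomorphism between moduli spaces attached to \emph{a priori} different target surfaces, which should be argued via the same deformation invariance that underlies the well-definedness of $N_{a,b}[{\bf P}]$.
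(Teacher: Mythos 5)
There is a genuine gap, and it lies exactly where you flagged the ``main obstacle'': the transformation you propose cannot be built from toric blow-ups/blow-downs centered at the torus-fixed points together with a $\GL_2(\ZZ)$ change of fan. Such a map is torus-equivariant, so its indeterminacy and exceptional loci are unions of toric strata; in particular it is insensitive to $\ell_1$ (the number of non-toric points blown up on $D_1^o$) and acts as an isomorphism along $D_1^o$ away from the fixed points. It therefore cannot produce the ray change $(a,b)\mapsto(\ell_1 b-a,b)$ compatibly with an identification of the open boundary divisors (indeed no element of $\GL_2(\ZZ)$ matches the fans: the pairwise determinants of the rays give $\{1,a,b\}$ for $X_{a,b}$ versus $\{1,\ell_1 b-a,b\}$ for $X_{\ell_1 b-a,b\}$, which differ in general), and, more fatally, it cannot change the tangency profile: if the map is an isomorphism near $E_i$ and identifies $E_i$ with its ``counterpart,'' the multiplicity stays $p_i$ and no requirement of ``preserving intersection numbers'' can force the flip $p_i\mapsto bk-p_i$. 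The flip must come from surgery at the non-toric points $P_i$ themselves.

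The paper's mechanism is different in precisely this respect. One first adds the ray $(1,0)$ to the fan (a toric modification at one fixed point) to obtain $Y_{a,b}$ with a $\PP^1$-fibration $\pi:Y_{a,b}\to\PP^1$ having $D_1$, $D_1'$ as sections and $D_\out$ as a multiple fiber; then one performs \emph{elementary transformations centered at the $\ell_1$ blown-up points on $D_1^o$}: on $Y_{a,b}[{\bf P}]$ the proper transforms of the fibers through those points are $(-1)$-curves in class $D_2-E_i$, and contracting them (together with the $E_j'$) exhibits the \emph{same} surface as a blow-up of $Y_{\ell_1 b-a,b}$ in a second way, i.e.\ an honest isomorphism $Y_{a,b}[{\bf P}]\cong Y_{\ell_1 b-a,b}[\overline{\bf P}]$ in which the roles of $E_i$ and the fiber proper transform are exchanged; the new exceptional divisor $\overline{E}_i$ has class $\nu_Y^*D_2-E_i$, whence the intersection with the curve class is $bk-p_i$, which is where ${\bf P}_a'$ comes from (and $\eta(D_1)^2=\frac{a}{b}-\ell_1$ gives the new ray). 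Note also that your fallback argument--that the birational map is an isomorphism on a neighbourhood of the images of the stable maps because of Lemmas \ref{jj23} and \ref{jj24}--cannot work even in principle for the correct transformation: the counted curves meet the fibers through the $P_i$ with class-multiplicity $bk-p_i>0$, so they pass through the very locus being modified. The paper avoids this entirely by comparing the two moduli spaces through a global isomorphism of surfaces preserving $D_\out$ (the fibers through the $P_i$ are disjoint from $D_\out$), so no ``isomorphism away from the exceptional locus'' or separate comparison of obstruction theories is needed.
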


\begin{proof}
We prove the first equality of Proposition
\ref{mgmg}. The argument for
$$N_{a,b}[({\bf P}_a,{\bf P}_b)]=N_{a,\ell_2a-b}[({\bf P}_a,{\bf P}_{b}')]$$
is, of course, identical.

Consider the surface $Y_{a,b}$ obtained by subdividing
the fan for $X_{a,b}$ by adding a ray in the direction $(1,0)$, as
depicted in Figure \ref{blowupfan}.
Denote by 
$$D_1,\ D_2,\ D_1',\ D_{\out}\subset Y_{a,b}$$
 the divisors corresponding
to the rays generated by $(-1,0)$, $(0,-1)$, $(1,0)$ and $(a,b)$
respectively. Projection onto the second coordinate
induces a map of toric varieties 
$$\pi:Y_{a,b}\rightarrow \PP^1.$$
Both $D_2$ and $D_{\out}$ are fibers of $\pi$, but $D_{\out}$
occurs with multiplicity $b$. Away from $D_{\out}$, $\pi$ is
 $\PP^1$-bundle.
The divisors $D_1$ and $D'_1$ are sections of $\pi$.

Let $Y_{a,b}^o\subset Y_{a,b}$ be the complement of
the four torus fixed points, and let
$$D_i^o = D_i \cap Y_{a,b}^o\ .$$
Choose a set of $\ell_1$ points on $D_1^o$ and a set of
$\ell_2$ points on $D_2^o$.  
Let 
$$
\nu_Y:Y_{a,b}[{\bf P}]\rightarrow
Y_{a,b}\ , \ \ \ \ 
{\bf P}=
({\bf P}_a,{\bf P}_b)\ ,$$ 
be the blow-up along all $\ell_1+\ell_2$ chosen points.
We use the same notation $D_1,D_2,D_1',D_{\out}$ for
the proper transforms in $Y_{a,b}[{\bf P}]$ 
of the respective divisors. 
Let 
$E_1,\ldots,E_{\ell_1}, E_1',\ldots,E_{\ell_2}'$ be the
exceptional divisors of $\nu_Y$.

\begin{figure}
\input{blowupfan.pstex_t}
\caption{}
\label{blowupfan}
\end{figure}

We can similarly consider $\overline{\bf P}=({\bf P}_a',{\bf P}_b)$
and perform the same construction for $(\ell_1b-a,b)$. We obtain
\[
\overline{\nu}_Y: Y_{\ell_1 b-a,b}[\overline{\bf P}]\rightarrow
 Y_{\ell_1 b-a,b}.
\]
Let 
 $\overline{D}_1,\overline{D}_1',\overline{D}_2,
\overline{D}_{\out} \subset Y_{\ell_1 b-a,b} $ be the
toric divisors. We denote their strict transforms with
respect to $\overline{\nu}_Y$
by the same symbols.  
Let
$\overline{E}_1,\ldots,\overline{E}_{\ell_1}, \overline{E}'_1,
\ldots,\overline{E}_{\ell_2}'$ be the
exceptional divisors of $\overline{\nu}_Y$.

Let $x_1,\ldots,x_{\ell_1}\in D_1^o\subseteq
Y_{a,b}$ be the points we have chosen on $D_1^o$.
On $Y_{a,b}[{\bf P}]$,
the proper transforms of
the fibres 
\begin{equation}\label{vtt2}
\pi^{-1}(\pi(x_1)),\ldots,\pi^{-1}(\pi(x_{\ell_1}))
\end{equation}
are $(-1)$-curves linearly equivalent to
$D_2-E_1,\ldots,D_2-E_{\ell_1}$ respectively. 
Let $\eta$ be the blow-down
of  the $\ell_1$ curves \eqref{vtt2} along with
$E_1',\ldots,E_{\ell_2}'$,
\[
\eta: Y_{a,b}[{\bf P}]\rightarrow Z_{a,b}.
\]
The rational map  $\pi\circ \nu_Y \circ\eta^{-1}$
from $Z_{a,b}$ to $\PP^1$ extends
to a morphism 
$$\pi_Z: Z_{a,b}\rightarrow\PP^1$$
with all fibres isomorphic to $\PP^1$ and reduced (except
for the multiple
fibre with support $\eta(D_{\out})$).\footnote{The birational
transformation we have described between the $\PP^1$-bundles
$\pi:Y_{a,b}\rightarrow\PP^1$ and $\pi_Z: Z_{a,b}
\rightarrow\PP^1$ is known as an \emph{elementary transformation}.}
Furthermore, $\eta(D_1)$ 
and $\eta(D_1')$
are sections of $\pi_Z$. From the above geometry, we easily
deduce that $Z_{a,b}$ is a toric variety with
toric boundary
$$\eta(D_1)\cup 
\eta(D_1') \cup \eta(D_2)\cup\eta(D_{\out}).$$ 

Which toric variety is $Z_{a,b}$ ?
 Because the restriction of $\pi_Z$ to
$Z_{a,b}\setminus\eta(D_{\out})$
is a smooth $\PP^1$-bundle over $\AA^1$, we see
$$Z_{a,b}
\setminus\eta(D_{\out})\ \cong \ \PP^1\times\AA^1$$ as toric varieties.
The latter is given, up to lattice isomorphism,
by a fan with rays generated by $(\pm 1,0)$ and $(0,-1)$,
so $Z_{a,b}$ must be given by a fan with an additional ray.
The fan must look exactly like
 Figure \ref{blowupfan}, with $(a,b)$ replaced
by some $(a',b')$:
\begin{enumerate}
\item[$\bullet$]
Since the morphism $\pi_Z$ is induced by projection onto
the second coordinate of the fan and
$\eta(D_{\out})$ is still the
support of a fibre of $\pi_Z$ with multiplicity $b$, we have
$b'=b$.
\item[$\bullet$]
On $Y_{a,b}$, we have $D_1^2=\frac{a}{b}$. Hence,
$D_1^2=\frac{a}{b}-\ell_1$ on 
$Y_{a,b}[{\bf P}]$.
Then, on $Z_{a,b}$,
$$\eta(D_1)^2=\frac{a-\ell_1b}{b}.$$ Thus,
$a'=a-\ell_1b$. 
\end{enumerate}
Using the identification $(a',b')=(a-\ell_1b,b)$,
we conclude
$$Z_{a,b}
\ \cong\ Y_{a-\ell_1b, b} \ \cong \ Y_{\ell_1b-a,b}$$
where the second isomorphism is obtained by
the involution $(m_1,m_2)\mapsto (-m_1,m_2)$
on $\ZZ^2$ identifying the fans for $Y_{a-\ell_1b,b}$  and
$Y_{\ell_1 b-a,b}$.
The composition
$$Y_{a,b}[{\bf P}] \stackrel{\eta}{\rightarrow}
Z_{a,b} \cong Y_{\ell_1b-a,b}$$
is the blow-up of $\ell_1$ points on $\overline{D}_1^o$ and
$\ell_2$ points on $\overline{D}_2^o$.

We have shown, if the point sets for the $\ell_1+\ell_2$  blow-ups 
are chosen
appropriately, there is an isomorphism
$$\varphi: Y_{a,b}[{\bf P}] \stackrel{\sim}{\rightarrow}
Y_{\ell_1b-a,b}[\overline{\bf P}]$$
compatible with boundary geometry
\[
\varphi(D_1)=\overline{D}'_1,\quad 
\varphi(D'_1)=\overline{D}_1,\quad
\varphi(D_2)=\overline{D}_2,\quad
\varphi(D_{\out})=\overline{D}_{\out}.
\]
Let $\beta_k^Y \in H_2(Y_{a,b},\ZZ)$
be the unique class with intersection numbers
\[
\beta^Y_k\cdot D_1=ak,\quad \beta^Y_k\cdot D_1'=0,\quad
\beta^Y_k\cdot D_2=bk,\quad \beta^Y_k\cdot D_{\out}=k \ , 
\]
and let $\beta_k^Y \in H_2(Y_{\ell_1b-a,b},\ZZ)$
be the unique class with intersection numbers
\[
\beta^Y_k\cdot \overline{D}_1=(\ell_1b-a)k,
\quad \beta^Y_k\cdot \overline{D}_1'=0,\quad
\beta^Y_k\cdot \overline{D}_2=bk,\quad 
\beta^Y_k\cdot \overline{D}_{\out}=k \ . 
\]
A straightforward analysis of $\varphi$ yields the
relation
\[
\nu_Y^*(\beta^Y_k)-\sum_{i=1}^{\ell_1}p_i[E_i]
-\sum_{j=1}^{\ell_2}p_j'[E_j']
=
\varphi^*\Big( \overline{\nu}_Y^*(\beta_k^Y)
-\sum_{i=1}^{\ell_1}(bk-p_i)[\overline{E}_i]
-\sum_{j=1}^{\ell_2}p_j'[\overline{E}_j']\Big).
\]

The equality 
$
N_{a,b}[({\bf P}_a,{\bf P}_b)]=
N_{\ell_1 b-a,b}[({\bf P}_a',{\bf P}_b)]$
now follows by unravelling the definitions in Section \ref{mom} of the
Gromov-Witten invariants.
The isomorphism $\varphi$ equates the
corresponding moduli spaces of relative stable maps
$$\overline{\foM}(X^o_{a,b}[{\bf P}]/D_\out^o)\ \cong \
\overline{\foM}(X^o_{\ell_1b-a,b}[\overline{\bf P}]/D_\out^o)\ . $$
The extra blow-ups (corresponding to divisors
$D_1'$ and $\overline{D}_1'$)
occurring in
$Y_{a,b}[{\bf P}]$ and $Y_{\ell_1b-a,b}[\overline{\bf P}]$
 do not
affect the relevant moduli spaces.
\end{proof}

The symmetry of Proposition \ref{mgmg} applied to Theorem 2
immediately yields the symmetry of Theorem 7.  \qed

\vspace{10pt}
If any part of ${\bf P}_a'$ is negative, then 
$N_{\ell_1 b-a,b}[({\bf P}_a',{\bf P}_b)]$ vanishes
since 
$$\overline{\foM}(X^o_{\ell_1b-a,b}[\overline{\bf P}]/D_\out^o)=\emptyset\ .$$
Proposition \ref{mgmg} then asserts the
vanishing of $N_{a,b}[({\bf P}_a,{\bf P}_b)]$.
Similar logic holds if any part of ${\bf P}_b'$ is negative.

Consider the discrete series ${\mathcal B}^*_{\ell_1,\ell_2}$
of the scattering pattern associated to $(\ell_1,\ell_2)$.
By Theorem 7, all the functions ${\mathsf f}_{a,b}$ for
$(a,b) \in {\mathcal B}^*_{\ell_1,\ell_2}$ are {\em equal} to
${\mathsf f}_{\TT_1(0,1)}$. If we apply the transformation
$\TT_1$ to $\TT_1(0,1)$, we leave the strict first quadrant, but
Proposition \ref{mgmg} still 
applies. The result is a simple calculation on
$\mathbb{P}^1\times \mathbb{P}^1$ which we leave to
the reader.

\begin{lemma} In the factorization \eqref{jjtrrrrt},
$$f_{a,b} = \big(1+ (tx)^a(ty)^b\big)^{\ell_2}$$
for all $(a,b) \in {\mathcal B}^*_{\ell_1,\ell_2}$.
\end{lemma}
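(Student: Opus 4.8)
The plan is to reduce the statement, via Theorem 7, to a single curve count on $\PP^1\times\PP^1$. As already observed just before this Lemma, Theorem 7 applied along the chain $\mathcal B^*_{\ell_1,\ell_2}=\{\TT_1(0,1),\,\TT_2\TT_1(0,1),\,\TT_1\TT_2\TT_1(0,1),\dots\}$ — whose members all lie strictly in the first quadrant and whose consecutive members differ by $\TT_1$ or $\TT_2$ — shows $\mathsf f_{a,b}=\mathsf f_{\TT_1(0,1)}=\mathsf f_{\ell_1,1}$ for every $(a,b)\in\mathcal B^*_{\ell_1,\ell_2}$. Since $f_{a,b}(t,x,y)=\mathsf f_{a,b}\big((tx)^a(ty)^b\big)$, it suffices to prove $\mathsf f_{\ell_1,1}(z)=(1+z)^{\ell_2}$; equivalently, by the definition of the coefficients $c^k_{a,b}$, it suffices to show
$$c^k_{\ell_1,1}(\ell_1,\ell_2)=\frac{(-1)^{k+1}}{k^2}\,\ell_2\qquad\text{for all }k\ge1,$$
so that $\log\mathsf f_{\ell_1,1}(z)=\sum_{k\ge1}k\,c^k_{\ell_1,1}(\ell_1,\ell_2)z^k=\ell_2\sum_{k\ge1}\frac{(-1)^{k+1}}{k}z^k=\ell_2\log(1+z)$.

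Next I would combine Theorem 2 with the Gromov--Witten symmetry of Proposition \ref{mgmg}, taking $(a,b)=(\ell_1,1)$. Although $\TT_1(\ell_1,1)=(0,1)$ leaves the open first quadrant, the first equality of Proposition \ref{mgmg} remains valid (as noted after the proof of Theorem 7) and gives
$$N_{\ell_1,1}[({\bf P}_a,{\bf P}_b)]=N_{0,1}[({\bf P}_a',{\bf P}_b)],\qquad {\bf P}_a'=(k-p_1)+\cdots+(k-p_{\ell_1}).$$
The partition ${\bf P}_a'$ has length $\ell_1$ and size $\ell_1k-|{\bf P}_a|=0$; hence, unless ${\bf P}_a$ is the balanced partition $k+\cdots+k$, some part of ${\bf P}_a'$ is strictly negative and $N_{0,1}[({\bf P}_a',{\bf P}_b)]=0$ by the vanishing remark following Proposition \ref{mgmg}. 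Feeding this into the formula of Theorem 2, only the balanced ${\bf P}_a$ survives and
$$c^k_{\ell_1,1}(\ell_1,\ell_2)=\sum_{|{\bf P}_b|=k}N_{0,1}[({\bf 0},{\bf P}_b)],$$
the sum over ordered partitions ${\bf P}_b$ of size $k$ and length $\ell_2$.

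Finally I would evaluate each $N_{0,1}[({\bf 0},{\bf P}_b)]$ using the geometry produced by the proof of Proposition \ref{mgmg}: the relevant target is the blow-up of $Y_{0,1}\cong\PP^1\times\PP^1$ at $\ell_2$ points of $\overline D_2^o$ (the $\ell_1$ blow-ups on $\overline D_1^o$ carry multiplicity $0$ in ${\bf P}_a'={\bf 0}$ and do not affect the moduli space), with relative divisor $\overline D_{\out}^o$, and the relevant class is $\beta^Y_k[\overline{\bf P}]$, where $\beta^Y_k$ is $k$ times the fibre class $A=[\overline D_1]$ and $A\cdot\overline D_{\out}=1$. A connected genus-$0$ configuration in class $kA$ must map onto a single fibre of class $A$, so the only ${\bf P}_b$ that contribute are ${\bf P}_b=k\,e_j$ for $j=1,\dots,\ell_2$, and for each such ${\bf P}_b$ the moduli space parametrizes $k$-fold covers, totally ramified over $\overline D_{\out}$, of the unique $(-1)$-curve $\overline{\nu}_Y^*A-\overline E'_j$ through the $j$-th point. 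The virtual contribution of these covers is the standard multiple-cover number $(-1)^{k+1}/k^2$ of \cite{Brypan}, exactly as in the $\ell_1=\ell_2=1$ computation recalled in \S\ref{exxc} (where $N_{1,1}[(k,k)]=(-1)^{k+1}/k^2$). Summing over $j$ gives $c^k_{\ell_1,1}(\ell_1,\ell_2)=\ell_2(-1)^{k+1}/k^2$, which is what was needed. I expect this last step to be the main obstacle: one must check carefully that connectedness of the domain really forces the image into a single $A$-fibre, that no ${\bf P}_b$ with more than one nonzero part contributes, and that each relevant $(-1)$-curve meets $\overline D_{\out}$ transversally in one point, so that the Bryan--Pandharipande formula applies verbatim; the rest is bookkeeping with the definitions of \S\ref{mom} and \S\ref{ggtt2}.
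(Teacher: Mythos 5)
Your argument is correct and follows the paper's own route: reduce along the discrete series via Theorem 7 to $\mathsf f_{\TT_1(0,1)}=\mathsf f_{\ell_1,1}$, apply the symmetry of Proposition \ref{mgmg} once more (valid even though $(0,1)$ leaves the strict first quadrant) to transfer to the blow-up of $Y_{0,1}\cong\PP^1\times\PP^1$, and then perform the curve count there. The paper leaves that last step as ``a simple calculation on $\PP^1\times\PP^1$,'' and your completion of it --- balanced ${\bf P}_a$ forced by nonnegativity of ${\bf P}_a'$, connectedness confining the image to the strict transform of a single fibre through one blown-up point so that only ${\bf P}_b=k\,e_j$ contributes, and the Bryan--Pandharipande multiple cover contribution $(-1)^{k+1}/k^2$ for each of the $\ell_2$ resulting $(-1)$-curves --- is exactly the intended argument.
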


Similarly, by switching the roles of $x$ and $y$, we obtain the
parallel conclusion for the other discrete series. 

\begin{lemma} In the factorization \eqref{jjtrrrrt},
$$f_{a,b} = \big(1+ (tx)^a(ty)^b\big)^{\ell_1}$$
for all $(a,b) \in {\mathcal A}^*_{\ell_1,\ell_2}$.
\end{lemma}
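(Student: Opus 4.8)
The plan is to obtain the lemma from the preceding one by exploiting the symmetry of the whole construction under interchanging $x$ and $y$, which exchanges $\ell_1$ with $\ell_2$ and the two discrete series $\mathcal{A}^*$ and $\mathcal{B}^*$.

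Let $\sigma$ be the $\CC[[t]]$-algebra automorphism of $\CC[x^{\pm1},y^{\pm1}][[t]]$ interchanging $x$ and $y$. A short computation from \eqref{vrzza} shows
$$\sigma\circ\theta_{(a,b),f}\circ\sigma^{-1}=\bigl(\theta_{(b,a),f^{\sigma}}\bigr)^{-1},$$
where $f^{\sigma}$ denotes $f$ with $x$ and $y$ swapped; in particular $\sigma$ normalizes the tropical vertex group. Applied to the four generators, this identifies $\sigma$-conjugation of the $(\ell_1,\ell_2)$-commutator $T_{\ell_2}^{-1}\circ S_{\ell_1}\circ T_{\ell_2}\circ S_{\ell_1}^{-1}$ with the \emph{inverse} of the $(\ell_2,\ell_1)$-commutator. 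Because $\sigma$ maps the first quadrant to itself while reversing the ordering of slopes, conjugating the factorization \eqref{jjtrrrrt} and matching it, direction by direction, against the factorization of the $(\ell_2,\ell_1)$-commutator (read in reverse order, with each factor inverted) forces, by uniqueness of ordered product factorizations, the equality of one-variable series
$$\mathsf{f}^{(\ell_1,\ell_2)}_{a,b}=\mathsf{f}^{(\ell_2,\ell_1)}_{b,a}$$
for every primitive $(a,b)$ lying strictly in the first quadrant (superscripts indicating the pair used).

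Next I identify the index sets. The transposition $\tau(a,b)=(b,a)$ intertwines $\TT_1$ for the pair $(\ell_1,\ell_2)$ with $\TT_2$ for the pair $(\ell_2,\ell_1)$, and $\TT_2$ for $(\ell_1,\ell_2)$ with $\TT_1$ for $(\ell_2,\ell_1)$; it also carries the seed $(1,0)$ of $\mathcal{A}^*$ to the seed $(0,1)$ of $\mathcal{B}^*$. Hence $\tau$ restricts to a bijection $\mathcal{A}^*_{\ell_1,\ell_2}\to\mathcal{B}^*_{\ell_2,\ell_1}$, which one checks from the explicit descriptions in Section \ref{bnm2} in the generic case and against the listed sets in the finitely many special cases. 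Therefore, for $(a,b)\in\mathcal{A}^*_{\ell_1,\ell_2}$ we have $(b,a)\in\mathcal{B}^*_{\ell_2,\ell_1}$, so the preceding lemma applied to the pair $(\ell_2,\ell_1)$ gives $\mathsf{f}^{(\ell_2,\ell_1)}_{b,a}(z)=(1+z)^{\ell_1}$ (the exponent being the second entry of the pair $(\ell_2,\ell_1)$). With the displayed equality this yields $\mathsf{f}^{(\ell_1,\ell_2)}_{a,b}(z)=(1+z)^{\ell_1}$, i.e. $f_{a,b}=\bigl(1+(tx)^a(ty)^b\bigr)^{\ell_1}$. Equivalently, one can mirror the proof of the preceding lemma directly: Theorem 7 collapses all $\mathsf{f}_{a,b}$, $(a,b)\in\mathcal{A}^*_{\ell_1,\ell_2}$, to $\mathsf{f}_{\TT_2(1,0)}=\mathsf{f}_{(1,\ell_2)}$, and a final application of $\TT_2$ (leaving the strict first quadrant) via the second equality of Proposition \ref{mgmg} reduces the evaluation of $f_{1,\ell_2}$ to the same elementary curve count on $\PP^1\times\PP^1$ as in that proof.

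The only genuine work is bookkeeping. One must verify that $\sigma$-conjugation produces exactly the inverse of the $(\ell_2,\ell_1)$-commutator, with no stray inverses — the exponents $-b$ and $a$ in \eqref{vrzza} have to be tracked with care — and that $\tau$ indeed swaps the two transformations $\TT_1,\TT_2$ and hence the two discrete series; these are the places where a sign error would most easily creep in.
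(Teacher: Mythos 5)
Your proposal is correct and follows the paper's own route: the paper disposes of this lemma precisely by ``switching the roles of $x$ and $y$'' to reduce to the preceding lemma for $\mathcal{B}^*_{\ell_2,\ell_1}$, which is exactly your $\sigma$-conjugation argument (your identity $\sigma\circ\theta_{(a,b),f}\circ\sigma^{-1}=\theta_{(b,a),f^{\sigma}}^{-1}$, the identification of the conjugated commutator with the inverse of the $(\ell_2,\ell_1)$-commutator, and the bijection $\mathcal{A}^*_{\ell_1,\ell_2}\to\mathcal{B}^*_{\ell_2,\ell_1}$ all check out). Your alternative closing remark, mirroring the previous lemma via Theorem 7 and Proposition \ref{mgmg}, is likewise the intended parallel argument, so there is nothing to add beyond the careful bookkeeping you have already supplied.
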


\subsection{Reflection functors for $Q_m$} In case
$m=\ell_1=\ell_2$,
the symmetry of the factorization \eqref{jjtrrrrt} 
has a very nice interpretation in terms of
the moduli spaces of $(1,0)$-semistable representations of $Q_m$.

Let $\rho=(V_1,V_2, \tau_1, \ldots, \tau_m)$ be a $(1,0)$-semistable
representation of  $Q_m$ with dimension vector $(d_1,d_2)$. 
Consider the canonically associated sequence
\begin{equation}\label{bbyyt}
V_1 \stackrel{\tau}{\rightarrow} \oplus_{i=1}^m V_2 \stackrel{\gamma}{\rightarrow}
\text{Coker}(\tau) \rightarrow 0\ ,
\end{equation}
 where $\tau=(\tau_1, \ldots, \tau_m)$.
The $(1,0)$-semistability condition implies $\tau$ is injective, hence
$$\text{dim}_{\mathbb{C}} \ \text{Coker}(\tau) = md_2 -d_1 \ .$$
The {\em reflection} $R\rho$ is defined to be 
the representation 
$$R\rho=(V_2 , \text{Coker}(\tau), \gamma\circ \iota_1, \ldots, 
\gamma\circ \iota_m)\ ,$$
where $\iota_i$ is the inclusion of $V_2$ as the $i^{th}$ factor
of $\oplus_{i=1}^m V_2$. 
The following Lemma is a standard
result, see  \cite{BBB,TTT}.

\begin{lemma} $R\rho$ is $(1,0)$-semistable. \label{jj233}
\end{lemma}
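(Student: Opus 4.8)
The plan is to check the $(1,0)$-semistability inequality for $R\rho$ directly, translating each of its subrepresentations back to a subrepresentation of $\rho$ through the map $\tau=(\tau_1,\dots,\tau_m)$. First I would record the numerics. Since $\rho$ is $(1,0)$-semistable, $\tau\colon V_1\to V_2^{\oplus m}$ is injective: if $\operatorname{Ker}(\tau)\neq 0$ then $(\operatorname{Ker}(\tau),0)$ is a subrepresentation of slope $1>\mu(\rho)=\tfrac{d_1}{d_1+d_2}$, a contradiction (if $d_2=0$ then $R\rho=0$ and the Lemma is trivial, so assume $d_2>0$). Injectivity forces $d_1\le md_2$, so $R\rho$ has dimension vector $(d_2,\,md_2-d_1)$ and
$$\mu(R\rho)=\frac{d_2}{(m+1)d_2-d_1},\qquad (m+1)d_2-d_1\geq d_2>0.$$
I would also reformulate the $(1,0)$-semistability of $\rho$ as follows: for \emph{every} subrepresentation $(A_1,A_2)\subseteq\rho$ one has $d_2\dim A_1\le d_1\dim A_2$ (this is the slope inequality cleared of denominators when $(A_1,A_2)$ is proper, and an equality when $(A_1,A_2)$ equals $0$ or $\rho$).

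Next, let $\hat\sigma=(W_2,W_C)$ be a subrepresentation of $R\rho$, so $W_2\subseteq V_2$, $W_C\subseteq\operatorname{Coker}(\tau)$, and $\gamma\big(W_2^{\oplus m}\big)\subseteq W_C$, where $W_2^{\oplus m}=\sum_{i=1}^m\iota_i(W_2)\subseteq V_2^{\oplus m}$. To $\hat\sigma$ I attach $U_1:=\tau^{-1}\big(W_2^{\oplus m}\big)\subseteq V_1$. Since $\tau_i(U_1)\subseteq W_2$ for all $i$, the pair $(U_1,W_2)$ is a subrepresentation of $\rho$; moreover injectivity of $\tau$ gives $\dim\big(W_2^{\oplus m}\cap\operatorname{im}\tau\big)=\dim U_1$, and exactness of \eqref{bbyyt} then yields
$$\dim\gamma\big(W_2^{\oplus m}\big)=\dim W_2^{\oplus m}-\dim\big(W_2^{\oplus m}\cap\operatorname{Ker}\gamma\big)=m\dim W_2-\dim U_1.$$
If $W_2=0$ then $\mu(\hat\sigma)=0\le\mu(R\rho)$, so assume $W_2\neq 0$, hence $(m+1)\dim W_2-\dim U_1\ge\dim W_2>0$. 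Using $\dim W_C\ge\dim\gamma(W_2^{\oplus m})=m\dim W_2-\dim U_1$, we obtain
$$\mu(\hat\sigma)=\frac{\dim W_2}{\dim W_2+\dim W_C}\leq\frac{\dim W_2}{(m+1)\dim W_2-\dim U_1}.$$

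Finally I would feed in the reformulated semistability of $\rho$ applied to $(U_1,W_2)$, namely $d_2\dim U_1\le d_1\dim W_2$. Since the denominators $(m+1)\dim W_2-\dim U_1$ and $(m+1)d_2-d_1$ are both positive, this inequality is equivalent, after clearing denominators, to
$$\frac{\dim W_2}{(m+1)\dim W_2-\dim U_1}\leq\frac{d_2}{(m+1)d_2-d_1}=\mu(R\rho),$$
and combining with the previous display gives $\mu(\hat\sigma)\le\mu(R\rho)$. As $\hat\sigma$ was arbitrary, $R\rho$ is $(1,0)$-semistable. The only delicate point is the bookkeeping around the assignment $\hat\sigma\mapsto(U_1,W_2)$ — in particular the codimension count for $\gamma(W_2^{\oplus m})$, which is exactly where injectivity of $\tau$ enters — together with the trivial degenerate cases; once that is set up, the inequality itself is purely formal. (Alternatively one could argue functorially, showing $R$ respects the exact structure and interchanges subobjects of $R\rho$ with quotient objects of $\rho$, but the slope estimate above is the shortest route for this Lemma.)
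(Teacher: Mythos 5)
Your proposal is correct and follows essentially the same route as the paper: both pull a subrepresentation $(W_2,W_C)$ of $R\rho$ back to the subrepresentation $(\tau^{-1}(W_2^{\oplus m}),W_2)$ of $\rho$, use exactness of \eqref{bbyyt} together with injectivity of $\tau$ for the key dimension count, and then invoke $(1,0)$-semistability of $\rho$; the paper phrases this as a contradiction from a hypothetical destabilizing subrepresentation, while you verify the slope inequality directly, which is just the contrapositive of the same argument. Your extra bookkeeping (justifying injectivity of $\tau$, the $d_2=0$ and $W_2=0$ cases, positivity of the denominators) is fine and slightly more careful than the paper's write-up.
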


\begin{proof}
The dimension vector of $R\rho$ is $(d_2,md_2-d_1)$.
Suppose $$U_1\subset V_2 \ \ \text{and} \ \ U_2\subset \text{Coker}(\tau)$$
determine a subrepresentation of $R\rho$ with dimension vector $(u_1,u_2)$.
If $(U_1,U_2)$ destabilizes $R\rho$, then
\begin{equation}\label{hqq2}
\frac{u_1}{u_1+u_2} > \frac{d_2}{(m+1)d_2-d_1} \ .
\end{equation}
An associated subrepresentation of $\rho$ is obtained from the
data
\begin{equation}\label{cfww}
\tau^{-1}(\oplus_{i=1}^m U_1) \subset V_1 \ \ \text{and}\  \ U_1\subset V_2\ .
\end{equation}
Let $u_3$ be the dimension of  $\tau^{-1}(\oplus_{i=1}^m U_1)$.
By sequence \eqref{bbyyt}, 
$u_3 \geq mu_1-u_2$ and hence
\begin{equation}\label{hqqq}
\frac{u_3}{u_3+u_1} \geq \frac{mu_1-u_2}{(m+1)u_1-u_2}\ .
\end{equation}
Using \eqref{hqq2}, we conclude the right side of \eqref{hqqq}
is strictly greater than $\frac{d_1}{d_1+d_2}$. Hence,
the slope of the subrepresentation \eqref{cfww} contradicts the 
$(1,0)$-semistability of $\rho$.
\end{proof}

The inverse to $R$ is defined as follows. From $\rho$,
we construct the sequence
\begin{equation*}
0 \rightarrow \text{Ker}({\tau'}) \stackrel{\gamma'}\rightarrow 
 \oplus_{i=1}^m V_1 \stackrel{{\tau'}}{\rightarrow} V_2 \ ,
\end{equation*}
 where ${\tau}'=\tau_1 \circ \iota'_1+\ldots+\tau_m \circ\iota_m'$
and  $\iota'_i$ is the projection 
of $\oplus_{i=1}^m V_1$ on the $i^{th}$ factor. 
The $(1,0)$-semistability of $\rho$ implies the surjectivity of
$\tau'$. Hence, 
$$\text{dim}_{\mathbb{C}} \ \text{Ker}(\tau') = md_1 -d_2 \ .$$
Define
the representation 
$R^{-1}\rho=(\text{Ker}(\tau'), V_1, \iota'_1\circ \gamma', \ldots, 
\iota'_m\circ \gamma')$.
Following the proof of Lemma \ref{jj233}, we obtain the parallel
result.

\begin{lemma} $R^{-1}\rho$ is $(1,0)$-semistable. \label{jj244}
\end{lemma}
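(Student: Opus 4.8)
The plan is to follow the proof of Lemma~\ref{jj233} with the roles of the two vertices interchanged, replacing the right-exact sequence \eqref{bbyyt} by the left-exact sequence $0 \to \text{Ker}(\tau') \to \bigoplus_{i=1}^m V_1 \to V_2$ used to define $R^{-1}\rho$. First I would record that, since $\rho$ is $(1,0)$-semistable, $\tau'$ is surjective, so $\dim \text{Ker}(\tau') = md_1 - d_2$ and $R^{-1}\rho$ has dimension vector $(md_1 - d_2,\, d_1)$, hence slope $\frac{md_1-d_2}{(m+1)d_1-d_2}$.

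Next, suppose $U_1 \subset \text{Ker}(\tau')$ and $U_2 \subset V_1$ determine a subrepresentation of $R^{-1}\rho$ of dimension vector $(u_1,u_2)$ which destabilizes it, i.e. $\frac{u_1}{u_1+u_2} > \frac{md_1-d_2}{(m+1)d_1-d_2}$. Unwinding the definition of the structure maps $\iota'_i\circ\gamma'$ of $R^{-1}\rho$, the subrepresentation condition reads exactly $\gamma'(U_1) \subset \bigoplus_{i=1}^m U_2$; in particular $u_2 > 0$, since otherwise $\gamma'(U_1)=0$ and hence $U_1 = 0$. From this data I would build the subrepresentation of $\rho$ with underlying spaces $\widehat V_1 = U_2 \subset V_1$ and $\widehat V_2 = \tau'\big(\bigoplus_{i=1}^m U_2\big) \subset V_2$; it is a subrepresentation because $\tau_i(U_2) \subseteq \tau'\big(\bigoplus_i U_2\big)$ for each $i$. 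The key estimate, playing the role of \eqref{hqqq}, comes from rank--nullity for the restricted map $\tau'\colon \bigoplus_{i=1}^m U_2 \to V_2$: its kernel is $\big(\bigoplus_i U_2\big)\cap\text{Ker}(\tau')$, which contains $\gamma'(U_1)$, so $\dim\widehat V_2 \le mu_2 - u_1$; therefore the slope of $(\widehat V_1,\widehat V_2)$ in $\rho$ satisfies $\frac{u_2}{u_2+\dim\widehat V_2} \ge \frac{u_2}{(m+1)u_2-u_1}$, and the denominator $(m+1)u_2 - u_1 \ge u_2 + \dim\widehat V_2 > 0$ is positive.

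It then remains to check the elementary equivalence that the destabilizing hypothesis $\frac{u_1}{u_1+u_2} > \frac{md_1-d_2}{(m+1)d_1-d_2}$ is equivalent, after clearing denominators, to $d_1 u_1 > (md_1-d_2)u_2$, which in turn is exactly equivalent to $\frac{u_2}{(m+1)u_2-u_1} > \frac{d_1}{d_1+d_2} = \mu(\rho)$. Combining with the previous paragraph gives $\mu(\widehat V_1,\widehat V_2) > \mu(\rho)$; since $u_2>0$ the subrepresentation $(\widehat V_1,\widehat V_2)$ is nonzero, and the strict inequality forces it to be proper, contradicting the $(1,0)$-semistability of $\rho$. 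Hence no destabilizing $(U_1,U_2)$ exists and $R^{-1}\rho$ is $(1,0)$-semistable.

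I do not anticipate a genuine obstacle: the statement is the formal dual of Lemma~\ref{jj233}, and one could even deduce it by applying that Lemma to the dual representation $\rho^*$, using that $R(\rho^*)\cong(R^{-1}\rho)^*$ and that $(1,0)$-semistability is preserved under $\rho\mapsto\rho^*$. The only mild care needed is bookkeeping: here $\iota'_i$ denotes the $i$-th projection $\bigoplus V_1 \to V_1$ (whereas in the construction of $R$ the analogous symbol was an inclusion), and one must track the direction of each inequality when cross-multiplying by the positive quantities $(m+1)u_2-u_1$ and $d_1+d_2$; both are routine.
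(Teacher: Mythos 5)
Your argument is correct and is exactly what the paper intends: its proof of Lemma~\ref{jj244} consists of the single sentence that one follows the proof of Lemma~\ref{jj233}, and your dual construction (passing from a destabilizing pair $(U_1,U_2)$ of $R^{-1}\rho$ to the subrepresentation $(U_2,\tau'(\oplus_i U_2))$ of $\rho$, with the rank--nullity bound $\dim\tau'(\oplus_i U_2)\le mu_2-u_1$ playing the role of \eqref{hqqq}) is precisely that parallel argument, with the slope manipulations checked correctly. The alternative you mention, deducing the statement from Lemma~\ref{jj233} applied to $\rho^*$, is also consistent with the paper's duality footnote used in Lemma~\ref{hnt}.
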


A straightforward verifications shows $R$ and $R^{-1}$ are
inverse to each other,
\begin{equation} \label{56yy}
R^{-1}R \rho \stackrel{\sim}{=} RR^{-1}\rho  
\stackrel{\sim}{=} \rho\ , 
\end{equation}
for all $(1,0)$-semistable representations of $\rho$.
The transformations $R$ and $R^{-1}$ act on dimension vectors by
$$R(a,b)= (b,m b-a), \ \ \ R^{-1}(a,b)=(ma-b,a)\ .$$
Using \eqref{56yy}, we find
isomorphims of moduli spaces
$$\mathcal{M}^{(1,0)}_m(d_1,d_2) \stackrel{\sim}{=}
\mathcal{M}^{(1,0)}_m(R^\pm(d_1,d_2)) $$
for $(d_1,d_2)$ and 
$R^\pm(d_1,d_2)$
in the first quadrant.

Next, we consider the role of the framings of Section \ref{fr11}.
Suppose $\rho$ has a front framing $L_2 \subset V_2$.
The subspace $L_2\subset V_2$ defines  a back
framing for
$R\rho$.
The argument of Lemma
\ref{jj233} yields a refined result.

\begin{lemma} If $(\rho, L_2\subset V_2)$ is a
$(1,0)$-stable front framed representation of $Q_m$, then
$(R\rho, L_2\subset V_2)$ is $(1,0)$-stable back
framed representation.
\end{lemma}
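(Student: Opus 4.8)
The plan is to refine, with the framing now carried along, the inequality argument used in the proof of Lemma~\ref{jj233}. For the stability condition $(1,0)$ we have $\mu(\rho)=\frac{d_1}{d_1+d_2}$, and since $R\rho$ has dimension vector $(d_2,md_2-d_1)$ its slope is $\mu(R\rho)=\frac{d_2}{(m+1)d_2-d_1}$; as $L_2$ is one-dimensional we have $d_2\geq 1$, so the denominator $(m+1)d_2-d_1=d_2+\dim\text{Coker}(\tau)$ is strictly positive. By Lemma~\ref{jj233} the underlying representation $R\rho$ is $(1,0)$-semistable, so what remains to be checked is condition~(ii) in the definition of stability of a back framed representation: every proper subrepresentation of $R\rho$ containing the back framing $L_2\subset V_2$ has slope strictly less than $\mu(R\rho)$.

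I would argue by contradiction: suppose $U_1\subseteq V_2$ and $U_2\subseteq\text{Coker}(\tau)$ define a proper subrepresentation of $R\rho$ with $L_2\subseteq U_1$ and $\frac{u_1}{u_1+u_2}\geq\mu(R\rho)$, where $u_i=\dim U_i$. Exactly as in the proof of Lemma~\ref{jj233}, form the subrepresentation $W=(W_1,W_2)$ of $\rho$ with $W_1=\tau^{-1}(\oplus_{i=1}^m U_1)$ and $W_2=U_1$; the subrepresentation condition for $(U_1,U_2)\subseteq R\rho$ gives $\gamma(\oplus_{i=1}^m U_1)\subseteq U_2$, so $u_3:=\dim W_1\geq mu_1-u_2$. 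Since $W_2=U_1\supseteq L_2$, the subrepresentation $W$ contains the front framing of $\rho$.

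Clearing denominators, the hypothesis $\frac{u_1}{u_1+u_2}\geq\frac{d_2}{(m+1)d_2-d_1}$ is equivalent to $u_1(md_2-d_1)\geq d_2u_2$, and hence
$$u_3 d_2-d_1u_1\ \geq\ (mu_1-u_2)d_2-d_1u_1\ =\ u_1(md_2-d_1)-d_2u_2\ \geq\ 0,$$
that is, $\mu(W)=\frac{u_3}{u_3+u_1}\geq\frac{d_1}{d_1+d_2}=\mu(\rho)$. Finally $W$ is a proper subrepresentation of $\rho$: it is nonzero because $W_2=U_1\supseteq L_2\neq 0$, and it is not all of $\rho$, for $W_2=U_1=V_2$ would force $U_2=\text{Coker}(\tau)$ by surjectivity of $\gamma$, contradicting the properness of $(U_1,U_2)$. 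Thus $W$ is a proper subrepresentation of $\rho$ containing $L_2$ with $\mu(W)\geq\mu(\rho)$, contradicting the $(1,0)$-stability of the front framed representation $(\rho,L_2\subset V_2)$, and the lemma follows.

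Beyond invoking Lemma~\ref{jj233}, the content is a rerun of its inequality chain with $\geq$ in place of $>$; the one place that needs genuine care — and which I would flag as the main obstacle — is checking that the induced subrepresentation $W$ is honestly \emph{proper} and honestly \emph{contains the framing}, since precisely these two features are what separate framed stability from ordinary (semi)stability and make the strengthening go through. Everything else is elementary manipulation of the slope inequalities together with the exactness of the defining sequence \eqref{bbyyt}.
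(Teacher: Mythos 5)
Your proof is correct and is exactly the route the paper takes: the paper gives no separate argument beyond the remark that ``the argument of Lemma~\ref{jj233} yields a refined result,'' and your write-up is precisely that refinement, rerunning the inequality chain with $\geq$ in place of $>$ while checking that the induced subrepresentation $W=(\tau^{-1}(\oplus_{i=1}^m U_1),U_1)$ of $\rho$ is proper and contains the front framing $L_2$. The extra care you flag (properness of $W$ and the containment of the framing) is indeed the only content beyond Lemma~\ref{jj233}, and you handle both points correctly.
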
 

Similarly, the back framing $L_1\subset V_1$ of $\rho$
determines a front framing of $R^{-1}\rho$.

\begin{lemma} If $(\rho, L_1\subset V_1)$ is a
$(1,0)$-stable back framed representation of $Q_m$, then
$(R^{-1}\rho, L_1\subset V_1)$ is $(1,0)$-stable front
framed representation.
\end{lemma}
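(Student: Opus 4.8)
The plan is to mirror the proof of Lemma~\ref{jj244} (the dual of Lemma~\ref{jj233}), now carrying along the framing. Write $\rho=(V_1,V_2,\tau_1,\ldots,\tau_m)$ with $\text{dim}\,\rho=(d_1,d_2)$, and recall $R^{-1}\rho=(\text{Ker}(\tau'),V_1,\iota_1'\circ\gamma',\ldots,\iota_m'\circ\gamma')$, where $\tau'=\tau_1\circ\iota_1'+\cdots+\tau_m\circ\iota_m'\colon\bigoplus_{i=1}^m V_1\to V_2$ is surjective by $(1,0)$-semistability of $\rho$, so that $\text{dim}\,R^{-1}\rho=(md_1-d_2,d_1)$ and $\mu(R^{-1}\rho)=\frac{md_1-d_2}{(m+1)d_1-d_2}$. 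Since $L_1\subset V_1$ sits in the second (target) slot of $R^{-1}\rho$, it is a front framing. By Lemma~\ref{jj244} the underlying representation is already $(1,0)$-semistable, so what remains is the framed stability condition: every proper subrepresentation $\widehat\sigma\subset R^{-1}\rho$ containing $L_1$ satisfies $\mu(\widehat\sigma)<\mu(R^{-1}\rho)$.

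First I would suppose for contradiction that $\widehat\sigma=(W_0\subset\text{Ker}(\tau'),\,W_1\subset V_1)$ is such a subrepresentation with $L_1\subset W_1$ and $\mu(\widehat\sigma)\ge\mu(R^{-1}\rho)$. Being a subrepresentation forces $W_0\subset\text{Ker}(\tau')\cap\bigoplus_{i=1}^m W_1$. I would dispose of the degenerate case $W_1=V_1$ at once: then $W_0\subset\text{Ker}(\tau')$ is arbitrary, and monotonicity of $x\mapsto x/(x+d_1)$ shows that either $W_0=\text{Ker}(\tau')$, so $\widehat\sigma=R^{-1}\rho$ is not proper, or $\mu(\widehat\sigma)<\mu(R^{-1}\rho)$. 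So I may assume $W_1\subsetneq V_1$.

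Next, following the dualized version of the associated-subrepresentation construction of Lemma~\ref{jj233}, I would pass to the subrepresentation $\big(W_1\subset V_1,\ \sum_{i=1}^m\tau_i(W_1)\subset V_2\big)$ of $\rho$; it is visibly closed under the $\tau_j$, nonzero, contains $L_1$ (since $L_1\subset W_1$), and is proper because $W_1\subsetneq V_1$. Writing $w_1=\text{dim}\,W_1$ and $v=\text{dim}\,\sum_i\tau_i(W_1)$, framed stability of $(\rho,L_1)$ gives $\frac{w_1}{w_1+v}<\frac{d_1}{d_1+d_2}$, i.e.\ $v/w_1>d_2/d_1$. On the other hand $\tau'$ restricted to $\bigoplus_i W_1$ has image of dimension $v$, so $\text{dim}\big(\text{Ker}(\tau')\cap\bigoplus_i W_1\big)=mw_1-v$, whence $\text{dim}\,W_0\le mw_1-v$; by monotonicity $\mu(\widehat\sigma)=\frac{\text{dim}\,W_0}{\text{dim}\,W_0+w_1}\le\frac{mw_1-v}{(m+1)w_1-v}$. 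Since $t\mapsto\frac{m-t}{(m+1)-t}$ is strictly decreasing and $v/w_1>d_2/d_1$, the right-hand side is strictly less than $\frac{m-d_2/d_1}{(m+1)-d_2/d_1}=\mu(R^{-1}\rho)$, contradicting $\mu(\widehat\sigma)\ge\mu(R^{-1}\rho)$. This establishes the framed stability of $(R^{-1}\rho,L_1\subset V_1)$.

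The argument is routine once Lemma~\ref{jj244} is in hand; there is no genuine obstacle, only bookkeeping. The one point that needs care — and essentially the only place the argument could go wrong — is the direction of the final chain of inequalities together with the correct treatment of the degenerate case $W_1=V_1$, which is why I would peel off that case at the very start rather than attempt to absorb it into the general estimate.
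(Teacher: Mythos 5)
Your proof is correct and follows essentially the route the paper intends: the paper states this lemma without proof, asserting it follows from the argument of Lemma \ref{jj233} (dualized as in Lemma \ref{jj244}), and your construction of the associated subrepresentation $\bigl(W_1,\ \sum_{i=1}^m\tau_i(W_1)\bigr)$ of $\rho$ containing $L_1$, together with the slope comparison via $\dim W_0\le mw_1-v$, is exactly that dualized argument with the framing carried along. The handling of the degenerate case $W_1=V_1$ and the monotonicity checks are all sound, so there is no gap.
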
 

We conclude the reflections yield isomorphisms of
moduli spaces of framed representations as well,{\footnote{The spaces
$\mathcal{M}^{(1,0),B}_m(d_1,d_2)$
and 
$\mathcal{M}^{(1,0),F}_m(R(d_1,d_2))$
may fail to be isomorphic.}}
\begin{equation}\label{zzzop}
\mathcal{M}^{(1,0),F}_m(d_1,d_2)  \stackrel{\sim}{=}
\mathcal{M}^{(1,0),B}_m(R(d_1,d_2)) . 
\end{equation}

For primitive $(a,b)$,
the generating series of Euler characteristics of Section 
\ref{fr22} may be
written as
$$B_{a,b}(t,x,y) = \mathsf{B}_{a,b}\Big( (tx)^a(ty)^b\Big), \ \ \
F_{a,b}(t,x,y) = \mathsf{F}_{a,b}\Big( (tx)^a(ty)^b\Big)\ ,$$
where $\mathsf{B}_{a,b}(z)$  and $\mathsf{F}_{a,b}(z) \in \mathbb{Q}[[z]].$

\begin{proposition}
Let $(a,b)$ be a primitive vector lying strictly in the
first quadrant. If $R(a,b)$ lies in the first quadrant,
$\mathsf{f}_{a,b} = \mathsf{f}_{R(a,b)}$.
\end{proposition}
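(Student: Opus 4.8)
The plan is to read off the statement from Reineke's Theorem 1 together with the reflection--functor isomorphism \eqref{zzzop}, exactly as Theorem 7 was read off from Theorem 2 and Proposition \ref{mgmg}. Write $R(a,b)=(b,mb-a)$. Since $R$ lies in $\GL_2(\ZZ)$ (its matrix $\bigl(\begin{smallmatrix}0&1\\-1&m\end{smallmatrix}\bigr)$ has determinant $1$), the vector $R(a,b)$ is again primitive; by hypothesis it lies strictly in the first quadrant, so $\mathsf{f}_{R(a,b)}$ is a genuine term of the factorization \eqref{jjtrrrrt} and the statement makes sense. Likewise $R(ak,bk)=(bk,(mb-a)k)$ lies in the first quadrant for every $k\geq 1$, which is what will be needed to invoke \eqref{zzzop} in each degree.

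First I would apply Theorem 1 to both $(a,b)$ and $R(a,b)$. For $(a,b)$ it gives $\mathsf{f}_{a,b}=(\mathsf{F}_{a,b})^{m/b}$, where $\mathsf{F}_{a,b}(z)=1+\sum_{k\geq 1}\chi\bigl(\mathcal{M}^{(1,0),F}_m(ak,bk)\bigr)\,z^{k}$. For $R(a,b)=(b,mb-a)$, whose \emph{first} coordinate is $b$, the same theorem gives $\mathsf{f}_{R(a,b)}=(\mathsf{B}_{b,mb-a})^{m/b}$, where $\mathsf{B}_{b,mb-a}(z)=1+\sum_{k\geq 1}\chi\bigl(\mathcal{M}^{(1,0),B}_m(bk,(mb-a)k)\bigr)\,z^{k}$. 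The key bookkeeping point is that the exponent is $m/b$ on both sides precisely because the first coordinate of $R(a,b)$ equals the second coordinate of $(a,b)$ --- this is what makes the reflection symmetry ``fit'' Theorem 1. Now feed in \eqref{zzzop}: for each $k\geq 1$ it yields an isomorphism of varieties $\mathcal{M}^{(1,0),F}_m(ak,bk)\cong\mathcal{M}^{(1,0),B}_m(bk,(mb-a)k)$, hence an equality of topological Euler characteristics. Comparing coefficients term by term gives $\mathsf{F}_{a,b}=\mathsf{B}_{b,mb-a}$ in $\QQ[[z]]$.

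Taking $m/b$--th powers then finishes the argument: $\mathsf{f}_{a,b}=(\mathsf{F}_{a,b})^{m/b}=(\mathsf{B}_{b,mb-a})^{m/b}=\mathsf{f}_{R(a,b)}$. There is essentially no analytic obstacle here; the proof is a matching of generating series. The only thing requiring care is the exponent bookkeeping highlighted above, and checking that all invoked objects are defined --- namely that $R(a,b)$ and every $R(ak,bk)$ are primitive and (strictly) in the first quadrant, both immediate from $R\in\GL_2(\ZZ)$ and the hypothesis. The companion statement for $R^{-1}$, and the degenerate case where $R(a,b)$ lands on the boundary (forcing $b=1$, $a=m$, $R(a,b)=(1,0)$), are handled identically or are vacuous.
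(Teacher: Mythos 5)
Your proof is correct and is essentially the paper's own argument: the paper likewise deduces $\mathsf{F}_{a,b}=\mathsf{B}_{R(a,b)}$ from the reflection isomorphisms \eqref{zzzop} applied to all dimension vectors $(ak,bk)$ and then concludes via Theorem 1, with your exponent bookkeeping ($m/b$ on both sides, since the first coordinate of $R(a,b)$ is $b$) being exactly the step the paper leaves implicit.
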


\begin{proof}
By the isomorphisms \eqref{zzzop} for all
dimension vectors $(ak,bk)$, we conclude
$$\mathsf{F}_{a,b} = \mathsf{B}_{R(a,b)}\ . $$
The result then follows from Theorem 1.
\end{proof}

Since $m=\ell_1=\ell_2$, there is an additional elementary
symmetry given by
\begin{equation}\label{grvv}
\mathsf{f}_{a,b}  =\mathsf{f}_{b,a} \ .
\end{equation}
In the presence of \eqref{grvv}, the symmetry
generated by $R$ is equivalent to the symmetries
generated by $\TT_1$ and $\TT_2$ of Theorem 7.

In the context of the ordered product factorization \eqref{jjtrrrrt}
of the commutator of $S_m$ and $T_m$, the symmetry $R$
was noticed earlier by Kontsevich.

\subsection{Further commutators}
Symmetries of commutators of more general elements
of the tropical vertex group may be similarly considered. 
Let 
\begin{eqnarray*}
p_1(t,x)& =& 1 + c_1 (tx)^1 + c_2 (tx)^2 + \ldots +c_{\ell_1-1} (tx)^{\ell_1-1}
+ (tx)^{\ell_1}, \\
p_2(t,y)& =& 1 + c'_1 (ty)^1 + c'_2 (ty)^2 + \ldots +c'_{\ell_2-1} (ty)^{\ell_2-1} + (ty)^{\ell_2}
\end{eqnarray*}
be polynomials of degrees $\ell_1$ and $\ell_2$ respectively
with highest coefficient equal to 1.
Let
\begin{eqnarray*}
\widehat{p}_1(t,x)& =& 
1 + c_{\ell_1-1} (tx)^1 + c_{\ell_1-2} (tx)^2 + \ldots + c_1 (tx)^{\ell_1-1}+
 (tx)^{\ell_1}, \\
\widehat{p}_2(t,y)& =& 
1 + c'_{\ell_2-1} (ty)^1 + c'_{\ell_2-2} (ty)^2 + \ldots+ c'_1(ty)^{\ell_2-1}
 (ty)^{\ell_2} \ .
\end{eqnarray*}
Consider the four elements
$${\mathcal{S}}_{\ell_1}=\theta_{(1,0),p_1(t,x)}, \ \ \ \
{\mathcal{T}}_{\ell_2}=\theta_{(0,1), p_2(t,y)}\ , $$
$$\widehat{{\mathcal{S}}}_{\ell_1}=\theta_{(1,0),\widehat{p}_1(t,x)}, \ \ \ \
\widehat{\mathcal{T}}_{\ell_2}=\theta_{(0,1), \widehat{p}_2(t,y)}\  $$
of the tropical vertex group.

The scattering pattern associated to
the commutator
\begin{equation*} 
    {\mathcal{T}}_{\ell_2}^{-1} \circ {\mathcal{S}}_{\ell_1} \circ 
{\mathcal{T}}_{\ell_2}
\circ {\mathcal{S}}_{\ell_1}^{-1} =
\stackrel{\rightarrow} \prod \theta_{(a,b),f_{a,b}}\ 
\end{equation*}
is related to the scattering patterns
\begin{equation*} 
    {\mathcal{T}}_{\ell_2}^{-1} \circ \widehat{\mathcal{S}}_{\ell_1} \circ 
{\mathcal{T}}_{\ell_2}
\circ \widehat{\mathcal{S}}_{\ell_1}^{-1} =
\stackrel{\rightarrow} \prod \theta_{(a,b),g_{a,b}} \ \ \ \text{and} \ \ \
   \widehat{\mathcal{T}}_{\ell_2}^{-1} \circ {\mathcal{S}}_{\ell_1} \circ 
\widehat{\mathcal{T}}_{\ell_2}
\circ {\mathcal{S}}_{\ell_1}^{-1} =
\stackrel{\rightarrow} \prod \theta_{(a,b),h_{a,b}}\ . \ \ 
\end{equation*}
As before, let
$$f_{a,b}(t,x,y) = \mathsf{f}_{a,b}\Big((tx)^a(ty)^b\Big), \ \ \ \ \
g_{a,b}(t,x,y) = \mathsf{g}_{a,b}\Big((tx)^a(ty)^b\Big), $$
$$h_{a,b}(t,x,y) = \mathsf{h}_{a,b}\Big((tx)^a(ty)^b\Big).
  $$

\vspace{+10pt}
\noindent {\bf Corollary 8.} {\em Let $(a,b) \in \mathbb{Z}^2$ be a
primitive vector lying strictly in the first quadrant.
If $\TT_1(a,b)$  lies strictly in the first quadrant, then
$$\mathsf{f}_{a,b} = \mathsf{g}_{\TT_1(a,b)}\ .$$
Similarly, if $\TT_2(a,b)$ lies strictly in the first quadrant, then
$\mathsf{f}_{a,b} = \mathsf{h}_{\TT_2(a,b)}$.}
\vspace{10pt}

\begin{proof}
As in the proof of Corollary 6, we start by 
factoring $p_1$ and $p_2$ over $\mathbb{C}$,
 $${\mathcal{S}}_{\ell_1}=\theta_{(1,0), (1+t_1x)(1+t_2x) \cdots
(1+t_{\ell_1}x)}\ , \ \ \ \ \ \
{\mathcal{T}}_{\ell_2}=\theta_{(0,1), (1+s_1y)(1+s_2y) \cdots
(1+s_{\ell_2}y)}\ .$$
The result then follows from 
Proposition \ref{mgmg} applied to
Theorem 5.4 of \cite{GPS}.
\end{proof}

\section{Further directions} \label{furr}
There are several interesting questions in the subject
which we have not been able to discuss here. We end by stating 
three:
\begin{enumerate}
\item[(i)]
The functions $f_{a,b}$ associated to
the commutator \eqref{jjtrrrr}
should satisfy certain 
integrality properties. In the $\ell_1=\ell_2$ case,
the relevant integrality is conjectured by Kontsevich and
Soibelman in \cite{ks2} 
and proven by Reineke in \cite{R3}. The
integrality of Conjecture 6.2 of \cite{GPS} constrains
all cases $(\ell_1,\ell_2)$ and, more generally, genus 0
relative Gromov-Witten 
invariants of surfaces (where the curves
have full contact order at a single point with the relative
divisor). Conjecture 6.2 of \cite{GPS} remains open.

\item[(ii)] The curve counting side of Corollary 3 has
a very natural higher genus extension discussed in
Section 5.8 of \cite{GPS} involving the 
top Chern class $\lambda_g$ of the Hodge bundle on
$\overline{M}_g$. The quiver side of Corollary 3 has
a natural extension by replacing the Euler characteristic
with the Poincar\'e polynomial. The two extensions
do not naively match. What is the meaning of the
higher genus Gromov-Witten theory on the quiver side?

\item[(iii)] Let $m$ be fixed.
M. Douglas  has conjectured 
the function
$$\frac{1}{a}\log\left(\chi\big(\mathcal{M}^{(1,0)}_m(a,b)\big)\right)$$
asymptotically (for large and primitive $(a,b)$) 
depends only upon $\frac{b}{a}$.
 Moreover, the limit function should be continuous. See
\cite{TTT} for a discussion of results toward the conjecture.
\end{enumerate}

A physical context for studying $m$-Kronecker quivers 
is explained in Section 4 of \cite{DeMo}.
Prediction (iii) fits naturally in the framework of \cite{DeMo}.

\section*{Acknowledgments}
Sections 1-3 of the paper closely follow a three lecture
course given by R.P. at the {\em Geometry summer
school} in July 2009 at the Instituto Superior T\'ecnico in Lisbon.
The results of Section 4 and 5 were worked out together during the IST visit and after. 
We thank the organizers A. Cannas da Silva
and R. Fernandes for providing a stimulating environment.

Conversations with T. Bridgeland and A. King about quivers
at a workshop on
{\em Derived categories} at the Simons center in Stony Brook
in January 2009
played an important role in the presentation of Section 2.
We are very grateful to M. Reineke for contributing 
Proposition 4.15 on the existence of semistables.
For the analysis in Sections \ref{ddf}-\ref{ddf3},
M. Bhargava has found an alternate argument via the genus theory
of Gauss for values of integral quadratic forms.
We thank
P. Hacking,  
S. Keel,
M. 
Kontsevich, D. Maulik, I. Setayesh, B. Siebert, Y. Soibelman, 
 and
R. Thomas 
for many related discussions.

M.~G. was partially supported by NSF grant DMS-0805328 and
the DFG.
R.~P. was partially supported by NSF grant DMS-0500187.


\end{document}